\documentclass{article}
\usepackage{amsmath, calligra, amssymb,amsthm,authblk,mathrsfs,mathtools}
\usepackage{xypic}
\usepackage{hyperref}

\newif\ifdraft\draftfalse
\newif\ifcite\citefalse
\newif\ifblow\blowtrue
\ifdraft\usepackage{datetime}\else\relax\fi
\ifdraft\ifcite\usepackage{showkeys}\else\usepackage[notcite,notref]{showkeys}\fi\fi

\DeclareMathOperator\res{\mathrm{Res}}
\DeclareMathOperator\Dbc{\mathit{D}^{\mathit{b}}_\mathit{c}}
\DeclareMathOperator\Perv{\mathrm{Perv}}
\DeclareMathOperator\Shv{\mathrm{Shv}}
\DeclareMathOperator\pH{\prescript{p}{}{\mathit{H}}}
\DeclareMathOperator\supp{\mathrm{supp}}

\DeclareMathOperator\diag{\mathrm{diag}}
\DeclareMathOperator\asc{asc}
\DeclareMathOperator\ch{ch}
\DeclareMathOperator\cycletype{cycletype}
\DeclareMathOperator\des{des}

\DeclareMathOperator\pr{\mathrm{pr}}
\newcommand\red{\mathrm{red}}
\newcommand\CF{\mathrm{CF}}
\newcommand{\bm}{{\mathbf{m}}}
\newcommand{\da}{{\mathord{\downarrow}}}
\newcommand{\ua}{{\mathord{\uparrow}}}
\newcommand{\sH}{{\mathscr{H}}}
\newcommand{\sHrs}{\sH^{\mathrm{rs}}}

\newcommand{\sX}{{\mathscr{X}}}
\newcommand{\sq}{{q}}
\newtheorem{conjecture}[equation]{Conjecture}
\newtheorem{corollary}[equation]{Corollary}
\newtheorem{proposition}[equation]{Proposition}
\newtheorem{theorem}[equation]{Theorem}
\newtheorem{fact}[equation]{Fact}

\newtheorem{lemma}[equation]{Lemma}

\theoremstyle{definition}
\newtheorem{definition}[equation]{Definition}

\theoremstyle{remark}
\newtheorem{remark}[equation]{Remark}

\newtheorem{example}[equation]{Example}

\DeclareMathOperator{\sHom}{\mathscr{H}\kern -2pt\mathit{om}}
\DeclareMathOperator{\sEnd}{\mathscr{E}\kern -1pt\mathit{nd}}
\DeclareMathOperator{\End}{\mathrm{End}}
\DeclareMathOperator{\Hom}{\mathrm{Hom}}
\DeclareMathOperator{\Aut}{\mathrm{Aut}}
\DeclareMathOperator{\GL}{\mathbf{GL}}
\DeclareMathOperator{\Gr}{\mathrm{Gr}}
\DeclareMathOperator{\Ad}{\mathrm{Ad}}
\DeclareMathOperator{\rH}{\mathrm{H}}

\DeclareMathOperator\id{\mathrm{id}}
\DeclareMathOperator\IC{\mathrm{IC}}

\DeclareMathOperator\colim{\mathrm{colim}}
\DeclareMathOperator\HH{\mathrm{H}}

\newcommand{\car}{\mathbf{car}}
\newcommand{\carss}{\car^{\mathrm{rs}}}
\newcommand\arr{\ifinner \to\else\longrightarrow\fi}
\newcommand\arrto{\ifinner\mapsto\else\longmapsto\fi}

\newcommand{\gr}{\mathfrak{g}^{\mathrm{r}}}
\newcommand{\grs}{\mathfrak{g}^{\mathrm{rs}}}
\newcommand{\dr}{\mathbf{d}_{\mathrm{r}}}
\newcommand*\Bell{\ensuremath{\boldsymbol\ell}}

\begin{document}

\hfuzz=3pt
\title{Unit Interval Orders and the Dot Action on the Cohomology of Regular Semisimple Hessenberg Varieties }
\author[1]{Patrick Brosnan\thanks{Department of Mathematics, 1301 Mathematics Building, University of Maryland, College Park, MD 20742, USA; \texttt{pbrosnan@umd.edu}.  Partially supported by NSF grant DMS-1361159.}}

\author[2]{Timothy Y. Chow\thanks{Center for Communications Research, 805 Bunn Drive, Princeton, NJ 08540, USA; \texttt{tchow@alum.mit.edu}.}}

\affil[1]{University of Maryland, College Park}
\affil[2]{Center for Communications Research, Princeton}
\maketitle

\begin{abstract}
  Motivated by a 1993 conjecture of Stanley and Stembridge, Shareshian
  and Wachs conjectured that the characteristic map
  takes the
  character of the
  dot action of the symmetric group on the cohomology
  of a regular semisimple Hessenberg variety to $\omega X_G(t)$, where
  $X_G(t)$ is the chromatic quasisymmetric function of the
  incomparability graph~$G$ of the corresponding natural unit interval
  order, and $\omega$ is the usual involution on symmetric functions.
  We prove the Shareshian--Wachs conjecture.

  Our proof uses the local invariant cycle theorem of
  Beilinson--Bernstein--Deligne to obtain a surjection, which we call
  the local invariant cycle map, from the cohomology of a regular
  Hessenberg variety of Jordan type~$\lambda$ to a space of local
  invariant cycles.  As $\lambda$ ranges over all partitions, the
  local invariant cycles collectively contain all the information
  about the dot action on a regular semisimple Hessenberg variety.  We
  then prove a result showing that, under suitable hypotheses,
  the local invariant cycle map is an isomorphism if and only if the
  special fiber has palindromic cohomology.  (This is a general theorem, which
  independent of the Hessenberg variety context.) 
  Applying this result to the universal family of Hessenberg varieties, we 
  show that, in our case, the surjections are actually isomorphisms,
  thus reducing the Shareshian--Wachs conjecture to computing the
  cohomology of a regular Hessenberg variety.
  But this cohomology has already been described combinatorially by
  Tymoczko, and, using a new reciprocity theorem for certain quasisymmetric
  functions, we show that Tymoczko's  description coincides
  with the combinatorics of the chromatic quasisymmetric function.
  \end{abstract}
\ifdraft\footnote{\today:\currenttime}\else\relax\fi

\section{Introduction}\label{intro}

Let $G$ be the incomparability graph of a unit interval order
(also known as an \emph{indifference graph}),
i.e., a finite graph whose vertices are closed unit intervals
on the real line, and whose edges join overlapping unit intervals.
It is a longstanding conjecture \cite{stanley-stembridge}
related to various deep conjectures about immanants
that if $G$ is such a graph,
then the so-called \emph{chromatic symmetric function}
$X_G$ studied by Stanley~\cite{stanley-xg} is $e$-positive,
i.e., a nonnegative combination of elementary symmetric functions.
(In fact, Stanley and Stembridge conjectured something seemingly
more general, but Guay-Paquet~\cite{guay-paquet} has reduced
their conjecture to the one stated here.)
Early on, Haiman~\cite{haiman} proved that the expansion
of~$X_G$ in terms of Schur functions
has nonnegative coefficients, and Gasharov~\cite{gasharov}
showed that these coefficients enumerate certain combinatorial
objects known as \emph{$P$-tableaux}.
It is well known that if $\chi$ is a character of
the symmetric group~$S_n$, then the image of~$\chi$
under the so-called characteristic map~$\ch$
\begin{equation}
\label{eq:ch}
\ch \chi := \frac{1}{n!} \sum_{\sigma\in S_n} \chi(\sigma) \,
    p_{\cycletype(\sigma)}
\end{equation}
(where $p$ here denotes the power-sum symmetric function)
is a nonnegative linear combination of Schur functions,
with the coefficients giving the multiplicities of
the corresponding irreducible characters of~$S_n$.
One may therefore suspect that $X_G$ is the image under~$\ch$
of the character of some naturally occurring representation of~$S_n$,
but until recently, there was no candidate, even conjecturally,
for such a representation.

Meanwhile, independently and seemingly unrelatedly,
De~Mari, Procesi, and Shayman~\cite{demari-procesi-shayman}
inaugurated the study of \emph{Hessenberg varieties}.
Let $\bm = (m_1, m_2, \ldots, m_{n-1})$ be a weakly
increasing sequence
of positive integers satisfying $i\le m_i\le n$ for all~$i$,
and let $s:\mathbb{C}^n \to \mathbb{C}^n$ be a linear transformation.
The (type~A) Hessenberg variety $\sH(\bm,s)$ is defined by
\begin{equation}
\label{eq:hessenberg}
\sH(\bm,s) := \{
\mbox{complete flags $F_0 \subseteq F_1 \subseteq \cdots \subseteq F_{n} :
sF_i \subseteq F_{m_i}$ for $1\leq i<n$}\}.
\end{equation}
The geometry of a Hessenberg variety depends on the Jordan form of~$s$.
If the Jordan blocks have distinct eigenvalues then we say that $s$ is \emph{regular}, and, by
extension, we also say that  
$\sH(\bm, s)$ is \emph{regular}.  
Similarly, if $s$ is diagonalizable then we say that 
$\sH(\bm, s)$ is \emph{semisimple}.  We say that $s$ has \emph{Jordan type $\lambda$}
if $\lambda$ is the partition of $n$ given by the sizes of the Jordan blocks on $s$. 
Hessenberg varieties have many interesting properties,
but of particular interest to us is the fact that
there is a representation, called the \emph{dot action}, of~$S_n$
on the cohomology of
regular semisimple Hessenberg varieties.  This dot action was first defined by Tymoczko, who
asked for a complete description of it~\cite{tymoczko};
e.g.,  a combinatorial formula for
the multiplicities of the irreducible representations
and/or for the character values.

A connection between these two apparently unrelated topics has
been conjectured by
Shareshian and Wachs~\cite{shareshian-wachs-0,shareshian-wachs}.
Motivated by the $e$-positivity conjecture,
they have generalized $X_G$ to something they call the
\emph{chromatic quasisymmetric function} $X_G(t)$ of a graph,
which is a polynomial in~$t$ with power series coefficients
that reduces to~$X_G$ when $t=1$.
They also noted that, if we are given a sequence~$\bm$ as above,
and we let $G(\bm)$ be the undirected
graph on the vertex set $\{1, 2, \ldots, n\}$
such that $i$ and~$j$ are adjacent if $i<j \le m_i$,
then $G(\bm)$ is an indifference graph,
and moreover that every indifference graph
is isomorphic to some $G(\bm)$.
They then made the following conjecture.
Let $\omega$ denote the usual involution 
on symmetric functions~\cite[Section 7.6]{stanley-ec2}.
\begin{conjecture}
\label{conj:sw}
Let $y$ be a regular semi-simple $n\times n$-matrix and let
$\chi_{\bm,d}$ denote the character of the dot action
on $\HH^{2d}(\sH(\bm, y))$.
Then $\ch \chi_{\bm,d}$ equals
the coefficient of $t^d$ in
$\omega X_{G(\bm)}(t)$.
\end{conjecture}
This conjecture is intriguing because not only   would it
answer Tymoczko's question, but it would also open up the possibility
of proving the $e$-positivity conjecture by geometric techniques.

The main result of the present paper is a proof of
Conjecture~\ref{conj:sw} (Theorem~\ref{thm:sw}).
The linchpin of our proof is the following result
(which is stated more formally later as Theorem~\ref{dsf}).

\begin{theorem}
\label{thm:linchpin}
Let $\lambda=(\lambda_1,\ldots, \lambda_{\ell})$ be a partition of~$n$.
Let $s$ be a regular element with Jordan type~$\lambda$,
and let $S_\lambda := S_{\lambda_1} \times \cdots\times S_{\lambda_\ell}$
be the corresponding Young subgroup of the symmetric group~$S_n$.
Consider the restriction of $\chi_{\bm,d}$ to $S_\lambda$.
Then the dimension of the subspace fixed by~$S_\lambda$
equals the Betti number $\beta_{2d}$ of $\sH(\bm,s)$.
\end{theorem}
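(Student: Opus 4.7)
The strategy is to realize the $S_\lambda$-fixed part of the dot action as a space of local monodromy invariants in a family of Hessenberg varieties, and then apply the local invariant cycle theorem of Beilinson--Bernstein--Deligne (BBD). Consider the total Hessenberg family
\[
\sH(\bm) := \{(s,F_\bullet)\in \mathfrak{g}\times G/B : sF_i\subseteq F_{m_i}\ \forall\, i\},
\]
which is a vector bundle over $G/B$ and hence smooth. The projection $\pi\colon \sH(\bm)\to \mathfrak{g}$ is proper, and its restriction to $\gr$ is smooth over $\grs$ with fiber $\sH(\bm,s)$ over $s$. Over $\grs$, the local system $R^{2d}\pi_*\mathbb{Q}$ carries a monodromy representation of $\pi_1(\grs/G)\cong W = S_n$ that is (by Tymoczko) the dot action $\chi_{\bm,d}$. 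For a regular element $s_0$ of Jordan type $\lambda$, the eigenvalues within a given Jordan block are independently permutable by small loops through nearby regular semisimple elements, while eigenvalues cannot be exchanged across blocks; hence the image of the local monodromy at $s_0$ in $S_n$ is precisely the Young subgroup $S_\lambda$. The BBD local invariant cycle theorem then supplies, for each $d$, a surjection
\[
H^{2d}(\sH(\bm, s_0))\twoheadrightarrow H^{2d}(\sH(\bm, s_{\mathrm{rs}}))^{S_\lambda},
\]
whose right-hand side is exactly the dimension of the $S_\lambda$-fixed subspace of $\chi_{\bm,d}$.

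To upgrade the surjection to an isomorphism I would run a palindromicity argument. By results of Tymoczko, $\sH(\bm,s_0)$ admits an affine paving, so $\sum_d \dim H^{2d}(\sH(\bm,s_0))\,t^d$ is palindromic of top degree $2\dim \sH(\bm,s_0)$. By Shareshian--Wachs, each graded piece of the dot action is a palindromic $S_n$-representation, so $\sum_d \dim H^{2d}(\sH(\bm, s_{\mathrm{rs}}))^{S_\lambda}\,t^d$ is palindromic of the same top degree. Their graded difference is therefore a palindromic polynomial with nonnegative coefficients (by the BBD surjection), so the desired equality in every degree reduces to an equality of total dimensions. The total dimension on the invariant side is accessible via Lefschetz: $|S_\lambda|^{-1}\sum_{\sigma\in S_\lambda}\chi(\sH(\bm, s_{\mathrm{rs}})^\sigma)$, and the $\sigma$-fixed loci can be identified via GKM combinatorics with products of smaller regular semisimple Hessenberg varieties whose Euler characteristics combine to yield $\chi(\sH(\bm,s_0))$.

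The hard part will be the compatibility of the two descriptions of the $S_n$-action in play: the geometric local monodromy on one hand, and Tymoczko's combinatorial dot action defined via GKM theory on the other. Equally delicate is the closing step of the palindromicity argument, namely ruling out any ``extra'' summands in the BBD decomposition supported on proper strata of the Jordan stratification; such a summand would contribute to $H^{2d}(\sH(\bm,s_0))$ but not to the $S_\lambda$-invariants, and would manifest as a nonzero palindromic discrepancy in the difference above. Once these two points are pinned down, BBD plus palindromicity conspire to collapse the surjection to the equality of dimensions claimed in the theorem.
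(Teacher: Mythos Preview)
Your architecture matches the paper's: identify the dot action with monodromy, compute the local monodromy group at a regular element of type~$\lambda$ as~$S_\lambda$, apply the local invariant cycle theorem to get a surjection, and then upgrade the surjection to an isomorphism via a palindromicity argument. However, there are two genuine gaps in your execution of the last two steps.

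\textbf{Palindromicity of the special fiber does not follow from an affine paving.} You write that Tymoczko's affine paving of $\sH(\bm,s_0)$ implies that $\sum_d \beta_{2d}\,t^d$ is palindromic. This is false: an affine paving only gives vanishing of odd cohomology; it says nothing about Poincar\'e-type symmetry, and regular (non-semisimple) Hessenberg varieties are singular in general. In the paper, palindromicity of the special fiber (Corollary~\ref{pbetti}) is obtained only \emph{after} proving a nontrivial combinatorial bijection (Theorem~\ref{thm:betti}) identifying Tymoczko's cell count $\beta_{2d}$ with the coefficient $c_{d,\lambda}(\bm)$ of $t^d m_\lambda$ in $\omega X_{G(\bm)}(t)$; then Shareshian--Wachs's palindromicity of $X_{G(\bm)}(t)$ is invoked. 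That combinatorial input is the missing piece in your outline, and it is precisely what feeds into Theorem~\ref{t1} to force the BBD surjection to be an isomorphism.

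\textbf{The Euler-characteristic closing does not go through as stated.} Your fallback plan is to match total dimensions (which would indeed make palindromicity irrelevant, since a degreewise surjection with equal totals is an isomorphism). But the proposed computation of $\dim H^*(\sH(\bm,y))^{S_\lambda}$ via Lefschetz fixed points is not available: Tymoczko's dot action is \emph{not} induced by an $S_n$-action on $\sH(\bm,y)$ as a space, so there are no ``$\sigma$-fixed loci'' to which one can apply Lefschetz. The action lives on the equivariant cohomology ring via GKM combinatorics, and extracting traces from that description is essentially as hard as the original problem. A smaller point: your description of the local monodromy as~$S_\lambda$ is only a heuristic; the paper pins this down by using the Kostant section $\epsilon\colon\car\to\gr$ to reduce (via Proposition~\ref{psec}) to the Galois cover $\mathbf{d}_{\mathrm r}\to\carss$, where the stabilizer computation of Proposition~\ref{pgal} applies directly.
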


What Theorem~\ref{thm:linchpin} does is to reduce the problem of
computing the dot action on a regular semisimple Hessenberg variety
to computing the cohomology of regular (but not necessarily semisimple)
Hessenberg varieties.
Fortunately, this latter task has already been largely carried out
by Tymoczko~\cite{tymoczko-linear},
who has given a combinatorial description of the Betti numbers $\beta_{2d}$
for all Hessenberg varieties in type~A.
So with Theorem~\ref{thm:linchpin} in hand,
all that remains to prove Conjecture~\ref{conj:sw} is
to give a bijection between Tymoczko's combinatorial description
and the combinatorics of $\omega X_{G(\bm)}(t)$.
More precisely, let $m_{\lambda}$ denote the monomial symmetric function
associated to the partition $\lambda$. (See~\cite[Section~7.3]{stanley-ec2}
or \S\ref{sym} below for monomial symmetric functions.)  It is then a
standard fact, proved explicitly in Proposition~\ref{prop:frobenius}
below,  
that the dimension of the subspace fixed by $S_\lambda$
in a representation~$\chi$ is the coefficient of
$m_\lambda$
in the monomial symmetric function expansion of $\ch \chi$.
So the first step of our proof is to compute the
coefficients $c_{d,\lambda}(\bm)$ of $t^d m_\lambda$
in the monomial symmetric function expansion
of $\omega X_{G(\bm)}(t)$.
We do this with a generalization of
a combinatorial reciprocity theorem of Chow (Theorem~\ref{thm:reciprocity}).
This yields a description of $c_{d,\lambda}(\bm)$ that is almost,
but not quite, identical to Tymoczko's description of $\beta_{2d}$;
we show that the descriptions are equivalent
by describing an explicit bijection
between the two (Theorem~\ref{thm:betti}).
As a corollary (Corollary~\ref{pbetti}), we derive the fact that the
Betti numbers of regular Hessenberg varieties
form a palindromic sequence
(even though the varieties are not smooth),
because Shareshian and Wachs have proved that
$\omega X_{G(\bm)}(t)$ is palindromic.

The idea behind the proof of Theorem~\ref{thm:linchpin} is to show
that Tymoczko's dot action coincides with the monodromy action for the
family $\sH^{\mathrm{rs}}(\bm)\to\mathfrak{g}^{\mathrm{rs}}$ of Hessenberg varieties
over the space of regular semisimple $n\times n$ matrices
(Theorem~\ref{tdot}).  This allows us to apply results from the theory
of local systems and perverse sheaves to questions involving the dot
action.  In particular, the local invariant cycle theorem of
Beilinson--Bernstein--Deligne, which is stated in our context as
Theorem~\ref{thm:bbd}, implies that there is a surjective map from
the cohomology of a regular Hessenberg variety to the space of local
invariants of the monodromy action near a regular element $s$ in the
space $\mathfrak{g}$ of all $n\times n$-matrices.

In Theorem~\ref{t1}, we show that the local invariant cycle map is an
isomorphism if and only if the Betti numbers of the special fiber are
palindromic in a suitable sense.  This is a general result in that it
holds for any projective morphism of smooth, complex, quasi-projective
schemes.  Then, in Theorem~\ref{tiy}, we show that the local invariant
cycles near a regular element $s$ with Jordan type~$\lambda$ coincide
with the $S_{\lambda}$ invariants of the dot action on the regular
semisimple Hessenberg variety.  The latter fact is proved by a
monodromy argument that uses the Kostant section.

Here is a brief description of the contents by
section. Section~\ref{prelim} mainly fixes notation and gives
preliminary results.  Section~\ref{reciprocity} proves the
combinatorial reciprocity theorem, Theorem~\ref{thm:reciprocity},
mentioned above.  Section~\ref{sbetti} proves Theorem~\ref{thm:betti}
on the Betti numbers of regular Hessenberg varieties, and derives
palindromicity as a corollary of a theorem of Shareshian and Wachs.
Section~\ref{lmlfg} reviews the concept of local monodromy and the
related notion of a good fundamental system of neighborhoods to a
point in topological space.  Section~\ref{sbn} proves Theorem~\ref{t1}
on palindromicity and  the local invariant cycle map.  Along the way we review the proof
of the local invariant cycle map from~\cite{bbd} and prove a slightly
stronger version of it (Theorem~\ref{big-lic}) using the Kashiwara conjecture~\cite{KashiwaraConj} (which, by the work
of several authors, is now a theorem).  We also prove Theorem~\ref{t-pal-main},  a more general
version of our theorem on palindromicity and the local invariant
cycle map.  
Section~\ref{gc} proves Proposition~\ref{pgal} on the local monodromy
of a Galois cover, which is applied later (in Lemma~\ref{myoung}) to
compute the local monodromy near a matrix $s$ of type~$\lambda$.
Section~\ref{sghs} introduces the family $\sH(\bm)\to\mathfrak{g}$ of
Hessenberg varieties.  Finally, Section~\ref{mtd} shows that the
monodromy action coincides with Tymoczko's dot action, and uses this
fact to prove Theorem~\ref{thm:sw}, which is a restatement of
Conjecture~\ref{conj:sw}.

\subsection{Previous work}
Prior to our work, Conjecture~\ref{conj:sw} was already known 
for some graphs~$G$: 
a complete graph (trivial),
a complete graph minus an edge~\cite{teffthesis},
a complete graph minus a path of length three (Tymoczko, unpublished),
and a path (by piecing together known results as
explained in~\cite{shareshian-wachs}).
In a different direction,
Abe, Harada, Horiguchi and Masuda (AHHM) 
proved that the multiplicity of the trivial representation is indeed as
predicted by Conjecture~\ref{conj:sw}.  Hearing about this development
and reading the last paragraph of the research announcement~\cite{ahhm}, which explains how to
compute the multiplicity of the trivial representation in terms of the
regular nilpotent Hessenberg variety, partially inspired our own proof.
Full details of the work of AHHM appeared  on the arXiv in~\cite{ahhm2}
shortly after the first draft~\cite{BrosnanChow} of this paper.
(Later, Abe, Horiguchi and Masuda
computed the ring structure on regular semisimple
Hessenberg varieties of type $(m_1,n,\ldots, n)$ in a way that is
compatible with the dot action and used this to deduce the structure
of the cohomology as an $S_n$-representation in that case~\cite{ahm}.)

In addition to the above work, very shortly after posting the first
version of this paper on the arXiv, we learned of the series of papers
by Chen, Vilonen and Xue (CVX) studying the the motives of certain
generalized Hessenberg varieties as well as the action of monodromy as
they vary in families.  (See, for example, \cite{cvxh}.)  The context
of this work is different from ours because, roughly speaking,
generalized Hessenberg varieties are much further from combinatorics
than the Hessenberg varieties which appear in our work.  However,
to the best of our knowledge, CVX were the first to exploit
the idea of studying a universal family of Hessenberg varieties using its
monodromy.

\subsection{Later work} Since the first version of this paper appeared
on the arXiv there have been a few related developments that may be
helpful for understanding this work. Firstly, Guay-Paquet posted a proof of
Conjecture~\ref{conj:sw} which is completely independent of and, in
many ways, complementary to our proof~\cite{gp-second}. 

On the other hand, using her generalization~\cite{precup} of
Tymoczko's computation of the Betti numbers of Hessenberg varieties in
type A, Precup generalized our palindromicity results
(Corollaries~\ref{pbetti} and~\ref{dimcor}) to regular Hessenberg
varieties for arbitrary complex semi-simple Lie
algebras~\cite{PrecupPal}.  As it happens,
our proof of palindromicity is
rather indirect, relying in an essential way on the
chromatic quasisymmetric function and a palindromicity theorem for
that function proved by Shareshian and Wachs (\cite[Corollary
4.6]{shareshian-wachs}).  So, even in the type A case dealt with in
this paper, Precup's direct proof is an important contribution.

Finally, using some of the ideas in this paper,  Harada and Precup have proved the $e$-positivity of the coefficients of $X_{G(\mathbf{m})}(t)$ for certain
sequences $\mathbf{m}$ corresponding to abelian ideals in the Lie algebra of strictly upper-triangular matrices~\cite{HaradaPrecup}.  This generalizes
Remark 4.4 to Theorem 4.3 of~\cite{stanley-stembridge}.

\subsection{Acknowledgments} 
Brosnan would like to thank Jim Carrell for discussions about
Hessenberg varieties and equivariant cohomology, Mark Goresky for
mentioning Springer's Bourbaki article~\cite{springer}, which eventually 
convinced us to look for a description of Tymoczko's dot action in
terms of monodromy, and Joseph Bernstein for a conversation in MPI in Bonn that helped convince us to reformulate the palindromicity theorem, Theorem~\ref{t1},
in terms of perverse sheaves on the base as we do in Theorem~\ref{jbpal}.
He would also like to thank Naj\-muddin Fakhruddin 
and Nero Budur for advice on 
\S\ref{lmlfg}-\ref{sbn},  Hiraku Abe for discussions over email about flatness
which led to the formulation of Theorem~\ref{t.flat} below, and Tom
Haines for discussions about Grothendieck's simultaneous resolution.
He thanks the
 Institute for Advanced Study for hospitality in Princeton, NJ during
 the Fall of 2014 when this work started.

We both thank the referees for many thoughtful suggestions and corrections.

\section{Preliminaries}\label{prelim}
We fix some notation that will be used throughout the paper.

\subsection{General notation}\label{gn}

We let $\mathbb{P}$ denote the positive integers.
If $n\in \mathbb{P}$, we let $[n]$ denote the set $\{1, 2,\ldots, n\}$.

The vector $\bm = (m_1, \ldots, m_{n-1})$
will always denote a \emph{Hessenberg function},
by which we mean a sequence of positive integers satisfying
\begin{enumerate}
\item $m_1\le m_2\le \cdots \le m_{n-1} \le n$, and
\item $m_i \ge i$ for all $i$.
\end{enumerate}
We also define
\begin{equation}
\label{eq:mdim}
\left|\mathbf{m}\right| := \sum_{i=1}^{n-1} (m_i-i).
\end{equation}
Given $\bm$, let $P(\bm)$ denote the poset
on the vertex set $[n]$ whose order relation $\prec$ is given by
\[ i\prec j \Longleftrightarrow j \in \{m_i+1, m_i+2, \ldots, n\}.\]
Such a poset is called a \emph{natural unit interval order}.
The \emph{incomparability graph} $G(\bm)$ is the undirected
graph on the vertex set $[n]$ in which $i$ and~$j$ are adjacent
if and only if $i$ and~$j$ are incomparable in~$P(\bm)$.
In other words, if $i<j$ then $i$ and~$j$ are adjacent in $G(\bm)$
if and only if $j\le m_i$.

An \emph{integer partition}
$\lambda = (\lambda_1, \lambda_2, \ldots, \lambda_\ell)$ of
a positive integer~$n$ is
a weakly decreasing sequence of
positive integers that sum to~$n$.
Each $\lambda_i$ is a \emph{part} of~$\lambda$,
and the number of parts of~$\lambda$ is denoted by $\ell(\lambda)$.
The \emph{Young diagram} of~$\lambda$
comprises $\ell$ rows of boxes, left-justified,
with $\lambda_i$ boxes in the $i$th row from the top.
We write  $\lambda\vdash n$ to indicate that $\lambda$ is a partition
of $n$.  

A \emph{composition}
$\alpha = (\alpha_1, \alpha_2, \ldots, \alpha_\ell)$ of
a positive integer~$n$ is a (not necessarily monotonic) sequence 
of positive integers that sum to~$n$.
Each $\alpha_i$ is a \emph{part} of~$\alpha$,
and the number of parts of~$\alpha$ is denoted by $\ell(\alpha)$.
It can be useful to visualize a composition of~$n$
by drawing vertical bars in some subset of the $n-1$ spaces between 
consecutive objects in a horizontal line of $n$ objects;
the parts are then the numbers of objects between successive bars.
Motivated by the equivalence between compositions and sets of bars,
we define:
\begin{itemize}
\item $|\alpha|$ for the number of bars of~$\alpha$
	(equivalently, $|\alpha| = \ell(\alpha)-1$;
	\emph{CAUTION:} $|\alpha|$ is \emph{not}
	the sum of the parts of~$\alpha$);
\item $\overline \alpha$ for the composition that has bars in precisely
the positions where $\alpha$ does \emph{not} have bars;
\item $\alpha \cup \beta$ for the composition whose bars comprise
the union of the bars of $\alpha$ and the bars of~$\beta$; and
\item $\alpha \le \beta$ if the bars of~$\alpha$
are a subset of the bars of~$\beta$.
\end{itemize}

We write $S_n$ for the symmetric group.
If $S_n$ acts in the usual way on a set of size~$n$,
and $\alpha$ is a composition of~$n$,
then the \emph{Young subgroup}~$S_\alpha$
is the subgroup
\begin{equation}
\label{eq:young}
S_{\alpha_1} \times S_{\alpha_2} \times \cdots \times S_{\alpha_\ell}
  \subseteq S_n
\end{equation}
comprising all the permutations that permute the first $\alpha_1$
elements among themselves, the next $\alpha_2$ elements among themselves,
and so on.

An \emph{ordered (set) partition}
$\sigma = (\sigma_1, \sigma_2, \ldots, \sigma_\ell)$ of
a finite set~$S$ is a sequence of pairwise disjoint non-empty subsets of~$S$
whose union is~$S$.

A \emph{sequencing} $\sq$ of a finite set~$S$ of cardinality~$n$
is a bijective map $\sq:[n] \to S$.
It is helpful to think of $\sq$ as the sequence $\sq(1), \ldots, \sq(n)$
of elements of~$S$.

By a \emph{digraph} we mean a finite directed graph with no loops or
multiple edges but that may have bidirected edges, i.e., it
may contain both $u\to v$ and $v\to u$ simultaneously.
If $D$ is a digraph, we write $\overline D$ for the
\emph{complement} of~$D$, i.e., the digraph with the same vertex set
as~$D$ but with a directed edge $u\to v$ if and only if
there does \emph{not} exist a directed edge $u\to v$ in~$D$.

\subsection{Symmetric and quasisymmetric functions}\label{sym}
We mostly follow the notation of Stanley~\cite{stanley-ec2}
for symmetric and quasisymmetric  functions.
For convenience, we recall some of the notation here.

Let $\mathbf{x} = \{x_1, x_2, x_3, \ldots\}$ be a countable set
of independent indeterminates.
If $\kappa:[n] \to \mathbb{P}$ is a map
then we write $\mathbf{x}_\kappa$ for the monomial
$x_{\kappa(1)} x_{\kappa(2)} \cdots x_{\kappa(n)}$.
A formal power series in $\mathbb{Q}[[\mathbf{x}]]=\mathbb{Q}[[x_1,x_2,\ldots ]]$ is a \emph{symmetric function}
if it is
of bounded degree and 
invariant under any permutation of the variables~$\mathbf{x}$.
We write $\Lambda$ for the subring of $\mathbb{Q}[[\mathbf{x}]]$
consisting of symmetric functions.   Then $\Lambda=\oplus_{n\geq 0} \Lambda_n$
where $\Lambda_n$ denotes the space of homogeneous symmetric functions of
degree $n$.

If $\lambda=(\lambda_1, \lambda_2, \ldots, \lambda_\ell)$ is
an integer partition,
then the \emph{monomial symmetric function} $m_\lambda$
is the symmetric function of minimal support that contains the monomial
$x_1^{\lambda_1}x_2^{\lambda_2}\cdots x_\ell^{\lambda_\ell}$.  For example,
\[ m_{2,1,1} = x_1^2 x_2 x_3 + x_2^2 x_1 x_3 + x_3^2 x_1 x_2 + 
               x_1^2 x_3 x_4 + x_3^2 x_1 x_4 + x_4^2 x_1 x_3 +  \cdots\]
There is an unfortunate conflict between
our notation for monomial symmetric functions and
our notation~$\bm$ for Hessenberg functions.
It should be clear from context which is meant since
the subscript of a monomial symmetric function is a partition,
whereas the entries of~$\bm$ have integer subscripts.

Set $h_n:=\sum_{\lambda\vdash n} m_{\lambda}$. Then, if $\lambda$ is a partition, set $h_{\lambda}=\prod h_{\lambda_i}$.
The $h_{\lambda}$ are called the
\emph{complete homogeneous symmetric functions}~\cite{stanley-ec2}.

Both $\{h_{\lambda}\}_{\lambda\vdash n}$ and $\{m_{\lambda}\}_{\lambda\vdash n}$ form bases of $\Lambda_n$.  So
we get a non-degenerate scalar product on $\Lambda_n$ (and on $\Lambda$ as well) by setting
\begin{equation} 
  \label{e.ScalarProd}
  \langle m_{\lambda},h_{\mu}\rangle =\delta_{\lambda\mu}\text{  (Kronecker delta)}
\end{equation}
as in \cite[Equation 7.30]{stanley-ec2}.
This scalar product is symmetric~\cite[Proposition 7.9.1]{stanley-ec2}.

Write $\CF_n$ for the space of $\mathbb{Q}$-valued class functions on $S_n$,
and set $\CF=\oplus_{n\geq 0} \CF_n$.  The \emph{characteristic map}~$\ch:\CF_n\to \Lambda_n$
is a function that sends class functions~$\chi$
on the symmetric group to symmetric functions via the formula
\begin{equation}
\ch \chi := \frac{1}{n!} \sum_{\sigma\in S_n} \chi(\sigma) \,
    p_{\cycletype(\sigma)}
\end{equation}
where $\cycletype(\sigma)$ is the integer partition consisting
of the cycle sizes of~$\sigma$, listed with multiplicity in
weakly decreasing order,
and $p$ denotes the power-sum symmetric function.
It turns out that $\ch$ is an isomorphism of $\mathbb{Q}$-vector spaces.
Moreover, if we give $\CF_n$ the standard inner product $\langle \cdot,\cdot\rangle$ on class functions, then $\ch$ is an isometry~\cite[Proposition 7.18.1]{stanley-ec2}:
\begin{equation}
  \label{e.iso}
  \langle \ch f, \ch g\rangle = \langle f, g\rangle.
\end{equation}

Note that all complex characters of finite dimensional representations
of $S_n$ are actually rational.  In fact, even the representations themselves
are realizable over $\mathbb{Q}$~\cite[Example 1, page 103]{SerreRep}.
So it makes sense to work with the $\mathbb{Q}$-valued class
functions even if we are interested in complex representations of $S_n$. 

As we explained in the introduction,
the following standard fact is an important ingredient in our proof.

\begin{proposition}
\label{prop:frobenius}
Let $\rho$ be a finite-dimensional complex representation of~$S_n$,
and let~$\chi$ be its character.  Let
$\ch\chi = \sum_\lambda c_\lambda m_\lambda$
be the monomial symmetric function expansion of $\ch\chi$.
Then $c_\lambda$ equals the dimension of the subspace
fixed by any Young subgroup $S_\lambda\subseteq S_n$.
In particular, knowing $c_\lambda$ for all $\lambda$
uniquely determines~$\chi$.
\end{proposition}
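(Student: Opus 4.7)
The strategy is to combine three standard ingredients: Frobenius reciprocity, the identification of $\ch$ of a permutation character with a complete homogeneous symmetric function, and the duality of the $\{h_\lambda\}$ and $\{m_\lambda\}$ bases under the Hall inner product on symmetric functions.

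First I would apply Frobenius reciprocity to rewrite the dimension of the fixed subspace as an inner product of characters of $S_n$:
\[ \dim V^{S_\lambda} = \dim \mathrm{Hom}_{S_\lambda}(\mathbf{1}, \rho|_{S_\lambda}) = \dim \mathrm{Hom}_{S_n}(\mathrm{Ind}_{S_\lambda}^{S_n} \mathbf{1}, \rho) = \langle \mathrm{Ind}_{S_\lambda}^{S_n} \mathbf{1}, \chi \rangle_{S_n}. \]
Next, I would invoke the standard fact that, after summing over $n$, the characteristic map $\ch$ is a graded isometry from the ring of class functions on the symmetric groups to the ring of symmetric functions endowed with the Hall inner product, and that the image of the permutation character $\mathrm{Ind}_{S_\lambda}^{S_n} \mathbf{1}$ is the complete homogeneous symmetric function $h_\lambda$. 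These two facts together yield
\[ \dim V^{S_\lambda} = \langle h_\lambda, \ch\chi \rangle. \]

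Finally, I would use the fact that $\{h_\mu\}$ and $\{m_\nu\}$ are dual bases with respect to the Hall inner product, i.e.\ $\langle h_\mu, m_\nu \rangle = \delta_{\mu\nu}$. Applied to the expansion $\ch\chi = \sum_\mu c_\mu m_\mu$ this gives $\langle h_\lambda, \ch\chi \rangle = c_\lambda$, which is the claimed equality. The last sentence of the proposition then follows at once: the monomial symmetric functions form a basis of $\Lambda$, so knowing all $c_\lambda$ determines $\ch\chi$, and $\ch$ is injective on characters, so it determines $\chi$ itself.

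There is no real obstacle here; all three ingredients are standard material (see, e.g., Stanley~\cite{stanley-ec2}, Chapter~7), and the only substantive content of the proposition is the clean interface they provide between the representation theory of $S_n$ and the monomial expansion of $\ch\chi$. The whole proof amounts to quoting these facts and chaining the equalities together.
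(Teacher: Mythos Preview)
Your proof is correct and follows essentially the same route as the paper's: Frobenius reciprocity, the identification $\ch(\mathrm{Ind}_{S_\lambda}^{S_n}\mathbf{1})=h_\lambda$, and the duality $\langle h_\mu,m_\nu\rangle=\delta_{\mu\nu}$. If anything, you are slightly more explicit than the paper in invoking the isometry property of $\ch$ when passing from character inner products to the Hall inner product.
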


\begin{proof}
Let $\chi\da^{S_n}_{S_\lambda}$ denote the
restriction of~$\chi$ to~$S_\lambda$, and
let $d_\lambda$ be
the dimension of the subspace fixed by~$S_\lambda$.
Then $d_\lambda$ equals
the multiplicity of the trivial representation~$\mathbf{1}$
in $\chi\da^{S_n}_{S_\lambda}$, i.e.,
$d_\lambda = \langle \mathbf{1}, \chi\da^{S_n}_{S_\lambda}\rangle$.
By Frobenius reciprocity~\cite[Theorem 1.12.6]{sagan},
\begin{equation}
\langle \mathbf{1}, \chi\da^{S_n}_{S_\lambda}\rangle
= \langle \mathbf{1}\ua^{S_n}_{S_\lambda}, \chi\rangle,
\end{equation}
where $\mathbf{1}\ua^{S_n}_{S_\lambda}$ is the induction
of $\mathbf{1}$ from~$S_\lambda$ up to~$S_n$.
But $\ch \mathbf{1}\ua^{S_n}_{S_\lambda}$ is just
the  homogeneous symmetric function $h_\lambda$
\cite[Corollary 7.18.3]{stanley-ec2}.
The monomial symmetric functions and the complete
homogeneous symmetric functions are dual bases,
so $d_\lambda = \langle h_\lambda, \ch\chi\rangle = c_\lambda$.
\end{proof}

Let $\alpha = (\alpha_1, \alpha_2, \ldots, \alpha_\ell)$
be a composition of~$n$.  The \emph{monomial quasisymmetric function}
$M_\alpha$ is the formal power series defined by
\begin{equation}
\label{eqn:Malpha}
M_\alpha := \sum_{i_1<\cdots<i_\ell} x_{i_1}^{\alpha_1} \cdots 
   x_{i_\ell}^{\alpha_\ell},
\end{equation}
where the sum is over all strictly increasing sequences
$(i_1, \ldots, i_\ell)$ of positive integers.
In addition, we define a degree-zero monomial
quasisymmetric function by $M_\varnothing := 1$.
A formal power series is a \emph{quasisymmetric function}
if it is a finite
rational linear combination of monomial quasisymmetric functions.
We write $\mathscr{Q}$ for the algebra of quasisymmetric functions
and $\mathscr{Q}_n$ for the space of homogeneous quasisymmetric functions
of degree $n$~\cite[Section 7.19]{stanley-ec2}.
Clearly, we have $\mathscr{Q}=\oplus \mathscr{Q}_n$, and clearly $\Lambda$ is a subalgebra of $\mathscr{Q}$.  
Note that it is a proper subalgebra.  (For example, $M_{2,1}\in\mathscr{Q}\setminus \Lambda$.)

The \emph{fundamental quasisymmetric function} $F_\alpha$
of Gessel~\cite{gessel} is defined by
\begin{equation}
\label{eq:fundamental}
F_\alpha := \sum_{\beta\ge\alpha} M_\beta,
\end{equation}
and again we set $F_\varnothing := 1$.
By inclusion-exclusion,
\begin{equation}
\label{eq:incexc}
M_\alpha = \sum_{\beta\ge\alpha} (-1)^{|\beta|-|\alpha|} F_\beta.
\end{equation}

\subsection{Hessenberg varieties}

As mentioned in the introduction,
if $\bm$ is a Hessenberg function and
$s:\mathbb{C}^n \to \mathbb{C}^n$ is a linear transformation,
then we define the \emph{Hessenberg variety} (of type~A,
which is the only type that we consider in this paper) by
\begin{equation*}
\sH(\bm,s) := \{
\mbox{complete flags $F_0 \subseteq F_1 \subseteq \cdots \subseteq F_n :
sF_i \subseteq F_{m_i}$ for $1\leq i<n$}\}.
\end{equation*}
If the Jordan blocks of~$s$ have distinct eigenvalues then we say that
$\sH(\bm, s)$ is \emph{regular}, if $s$ is diagonalizable then we say
that $\sH(\bm, s)$ is \emph{semisimple}, and if $s$ is nilpotent then
we say that $\sH(\bm, s)$ is \emph{nilpotent}.  Since $\sH(\bm,s)$ can
equal $\sH(\bm,s')$ for $s\neq s'$ (e.g., if $s'-s$ is a constant),
this is a (very minor) abuse of terminology.   
We adopt the convention of writing
$y$ for~$s$ in the regular semisimple case.

\begin{remark}
  The Hessenberg varieties are defined on affine open subsets of the
  complete flag variety by fairly obvious equations.  So they are
  closed subschemes of the complete flag variety in a natural way.  In
  general, they are not irreducible.  For example, the regular
  semisimple Hessenberg variety corresponding to the function
  $\Bell=(1,2,\ldots, n-1)$ is a collection of $n!$ distinct points.
  They are also not always reduced.  For example, when $n=2$ and
  $\bm=\Bell$ as above, the regular nilpotent Hessenberg variety is
  defined by the equation $x^2=0$ in $\mathbb{A}^1$.
  (See~\cite[Theorem 7.6]{dt} for a much more general statement.)
  Hartshorne defines an abstract variety to be an integral separated
  scheme of finite type over an algebraically closed
  field~\cite[p.105]{hartshorne}.  So, perhaps, it is unfortunate that
  Hessenberg varieties are called varieties as they are not, in
  general, integral.  However, it happens that we are only interested
  in the Betti cohomology of these varieties in this paper.  So the
  non-reduced structure will not play a role.  Moreover, we will
  reserve the term ``Hessenberg \emph{scheme}'' for the families
  discussed in \S\ref{sghs}.   
  So we will stick with tradition and
  continue to call the schemes $\sH(\bm,s)$ Hessenberg
  \emph{varieties}.

However, we ask the reader to regard ``Hessenberg variety'' as one word.
The term \emph{variety} by itself will still refer to an integral, separated
scheme of finite type over an algebraically closed field.  
\end{remark}

\section{The chromatic quasisymmetric function}\label{reciprocity}

Given a graph $G$ whose vertex set is a subset of~$\mathbb{P}$,
Shareshian and Wachs~\cite{shareshian-wachs} define the
\emph{chromatic quasisymmetric function} $X_G(\mathbf{x},t)$ of~$G$.

\begin{definition}\label{cqsf}
Let $G$ be a graph whose vertex set~$V$ is a finite subset of~$\mathbb{P}$.
Let $C(G)$ denote the set of all proper colorings of~$G$,
i.e., the set of all maps $\kappa:V\to\mathbb{P}$ such
that adjacent vertices are always mapped to distinct positive integers.
Then
\begin{equation}
\label{eqn:XGt}
X_G(\mathbf{x}, t) := \sum_{\kappa\in C(G)}
t^{\asc\kappa}\,\mathbf{x}_\kappa,
\end{equation}
where
\[ \asc{\kappa} := \left|\left\lbrace \mbox{$\{u,v\}$} 
 : \mbox{$\{u,v\}$ is an edge of $G$ and $u<v$ and
   $\kappa(u) < \kappa(v)$}\right\rbrace \right|.
\]
\end{definition}

As Shareshian
and Wachs point out, it is obvious that $X_G(\mathbf{x},t)\in \mathscr{Q}[t]$.
On the other hand, while the proof of the following result, \cite[Theorem 4.5]{shareshian-wachs}, is not long,
the result itself is not at all obvious: 

\begin{theorem}[Shareshian--Wachs]\label{swsym}
  Suppose $G=G(\mathbf{m})$ is  the  incomparability graph of a natural
  unit interval order.  Then $X_G(\mathbf{x},t)\in\Lambda[t]$. 
\end{theorem}

For brevity, we sometimes write $X_G(t)$ for $X_G(\mathbf{x}, t)$.
It will be convenient for us to restate the definition of~$X_G(t)$
in terms of monomial quasisymmetric functions.  

\begin{proposition}
Let $G$ be a graph whose vertex set~$V$ is a finite subset of~$\mathbb{P}$.
Then
\begin{equation}
X_G(\mathbf{x},t) = \sum_{\sigma=(\sigma_1, \ldots, \sigma_\ell)}
t^{\asc\sigma}\, M_{|\sigma_1|, \ldots, |\sigma_\ell|},
\end{equation}
where the sum is over all ordered partitions~$\sigma$ of~$V$
such that every~$\sigma_i$ is a stable set of~$G$
(i.e., there is no edge between any two vertices of~$\sigma_i$),
and $\asc{\sigma}$ is the number of edges $\{u,v\}$ of~$G$ such
that $u<v$ and $v$ appears in
a later part of~$\sigma$ than $u$ does.
\end{proposition}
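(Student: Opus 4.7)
The plan is to reorganize the sum in the defining equation~\eqref{eqn:XGt} by grouping proper colorings according to their color classes. Given a proper coloring $\kappa \in C(G)$, let $c_1 < c_2 < \cdots < c_\ell$ be the distinct values in the image of $\kappa$, and set $\sigma_i := \kappa^{-1}(c_i)$. Because $\kappa$ is proper, each $\sigma_i$ is a stable set, and $\sigma(\kappa) := (\sigma_1, \ldots, \sigma_\ell)$ is an ordered set partition of $V$ into stable sets. I would first observe that every such ordered partition $\sigma$ arises, and that the fiber of $\kappa \mapsto \sigma(\kappa)$ over $\sigma$ is in bijection with the set of strictly increasing sequences $i_1 < i_2 < \cdots < i_\ell$ in $\mathbb{P}$, via $\kappa(v) = i_j$ for $v \in \sigma_j$.

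Next, I would compute the contribution of a fixed $\sigma$ to the sum. For any $\kappa$ in the fiber over $\sigma$, the monomial $\mathbf{x}_\kappa$ equals $x_{i_1}^{|\sigma_1|}\cdots x_{i_\ell}^{|\sigma_\ell|}$, so summing over all choices of $i_1 < \cdots < i_\ell$ gives $M_{|\sigma_1|,\ldots,|\sigma_\ell|}$ by the defining formula~\eqref{eqn:Malpha}.

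The key step is to show that the statistic $\asc\kappa$ depends only on $\sigma(\kappa)$ and agrees with $\asc\sigma$ as defined in the proposition. If $\{u,v\}$ is an edge with $u < v$, say $u \in \sigma_a$ and $v \in \sigma_b$ (with $a \ne b$ because $\sigma_a$ is stable), then $\kappa(u) = i_a$ and $\kappa(v) = i_b$, so the strict monotonicity $i_1 < \cdots < i_\ell$ yields $\kappa(u) < \kappa(v)$ if and only if $a < b$, i.e., if and only if $v$ lies in a later part of $\sigma$ than $u$. Thus $\asc\kappa = \asc\sigma(\kappa)$, and combining the previous steps gives
\[
X_G(\mathbf{x},t) = \sum_{\sigma} \sum_{\kappa : \sigma(\kappa)=\sigma} t^{\asc\kappa}\,\mathbf{x}_\kappa
= \sum_{\sigma} t^{\asc\sigma}\, M_{|\sigma_1|,\ldots,|\sigma_\ell|},
\]
where $\sigma$ ranges over ordered partitions of $V$ into stable sets.

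There is no real obstacle here; the argument is a routine reorganization of the defining sum along the canonical map from proper colorings to ordered partitions into color classes. The only point requiring a moment's attention is the verification that the ascent statistic is constant on each fiber, which follows immediately from the fact that the color values assigned to the parts are chosen in increasing order.
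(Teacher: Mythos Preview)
Your proof is correct and follows essentially the same approach as the paper: group proper colorings by the ordered partition of vertices into color classes (ordered by increasing color), observe that summing $\mathbf{x}_\kappa$ over a fiber yields the monomial quasisymmetric function $M_{|\sigma_1|,\ldots,|\sigma_\ell|}$, and check that $\asc\kappa$ depends only on this ordered partition. If anything, you have spelled out the verification that $\asc\kappa = \asc\sigma(\kappa)$ more explicitly than the paper does.
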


\begin{proof}
Given a coloring $\kappa\in C(G)$, let $\sigma_i$ be the set of vertices
that are assigned the $i$th smallest color.
Then it is immediate that
\begin{enumerate}
\item $\sigma=(\sigma_1, \ldots, \sigma_\ell)$ is an ordered partition
of the vertex set of~$G$;
\item $\sigma_i$ is a stable set for all~$i$; and
\item $\asc \kappa = \asc\sigma$.
\end{enumerate}
It is easy to see that if we sum $\mathbf{x}_\kappa$ over all
$\kappa\in C(G)$ that yield the same ordered partition~$\sigma$,
then we obtain the monomial quasisymmetric function $M_\alpha$
where the $i$th part~$\alpha_i$ of the composition~$\alpha$
is the cardinality $|\sigma_i|$ of~$\sigma_i$.
The proposition follows.
\end{proof}

We remark that if we set $t=1$ then the chromatic quasisymmetric
function specializes to the \emph{chromatic symmetric function}~$X_G$
of Stanley~\cite{stanley-xg}.

\subsection{Reciprocity}

If $f$ is a symmetric function, then a ``reciprocity theorem,''
loosely speaking, is a result that gives a combinatorial interpretation
of $\omega f$, where $\omega$ is a well-known involution
on symmetric functions~\cite[Section 7.6]{stanley-ec2}.
Since Conjecture~\ref{conj:sw} concerns $\omega X_G(t)$
rather than $X_G(t)$ itself, one might expect a reciprocity theorem
to be relevant.  This is indeed the case.
Specifically, the coefficients of the monomial symmetric function
expansion of $\omega X_G(t)$ play an important role in our arguments,
so we now introduce some notation for them.

\begin{definition}
Given a Hessenberg function~$\bm$,
we let $c_{d,\lambda}(\bm)$ be the coefficients defined by
the following expansion of $\omega X_{G(\bm)}(\mathbf{x}, t)$
in terms of monomial symmetric functions:
\begin{equation}
\label{eq:monom}
   \omega X_{G(\bm)}(\mathbf{x}, t) =
      \sum_d t^d \sum_\lambda c_{d,\lambda}(\bm) m_\lambda.
\end{equation}
\end{definition}

It is possible to derive a combinatorial interpretation for
$c_{d,\lambda}(\bm)$ by using 
the reciprocity theorem of Shareshian and Wachs
\cite[Theorem 3.1]{shareshian-wachs}.
However, as we now explain, we shall take a different route.

Our starting point is the observation that
Chow~\cite[Theorem 1]{chow} has proved a reciprocity theorem for
a symmetric function invariant of a digraph called the
\emph{path-cycle symmetric function}~$\Xi_D$.
There is a certain precise sense in which $\Xi_D$
is equivalent to Stanley's $X_G$ in the case of posets,
but the nice thing about reciprocity for $\Xi_D$
is that it naturally yields a combinatorial interpretation for the
coefficients of the monomial symmetric function expansion
of~$\omega \Xi_D$, which is not immediately evident from
Stanley's reciprocity theorem~\cite[Theorem 4.2]{stanley-xg} for~$X_G$.
This fact suggests the following plan: Generalize $\Xi_D$ to $\Xi_D(t)$
(just as Shareshian and Wachs have generalized $X_G$ to $X_G(t)$),
prove reciprocity for $\Xi_D(t)$,
and read off the desired combinatorial interpretation
of~$c_{d,\lambda}(\bm)$.
This plan works, and we now show how to carry it out.

We define the \emph{path quasisymmetric function} $\Xi_D(\mathbf{x},t)$
of a digraph~$D$; as its name suggests,
it enumerates paths only and not cycles
(since for our present purposes we do not care about enumerating cycles),
and it has a definition analogous to that of the chromatic quasisymmetric
function.

\begin{definition}
Let $D$ be a digraph whose vertex set~$V$ is a subset of~$\mathbb{P}$.
An \emph{ordered path cover} of~$D$ is an ordered pair $(\sq,\beta)$
such that $\sq$ is a sequencing of~$V$,
$\beta = (\beta_1, \ldots, \beta_\ell)$
is a composition of $n := |V|$, and 
\[ \sq(\beta_{i-1}+1)\to \sq(\beta_{i-1}+2)\to \cdots\to \sq(\beta_i)\]
is a directed path in~$D$ for all $i\in [\ell]$
(adopting the convention that $\beta_0=0$).
Define
\begin{equation}
\label{eq:xidt}
\Xi_D(\mathbf{x},t) := \sum_{(\sq,\beta)} t^{\asc \sq}\, M_\beta
\end{equation}
where the sum is over all ordered path covers $(\sq,\beta)$ of~$D$
and $\asc \sq$ is the number of pairs $\{u,v\}$ of vertices of~$D$
such that
\begin{enumerate}
\item either $u\to v$ and $v\to u$ are both edges of~$D$ or neither one is,
\item $u<v$, and 
\item $v$ appears later in the sequencing~$\sq$ than $u$ does.
\end{enumerate}
\end{definition}

For brevity, we sometimes write $\Xi_D(t)$ for $\Xi_D(\mathbf{x},t)$.
The chromatic quasi\-symmetric function and the path quasisymmetric function
coincide for posets.  More precisely, we have the
following proposition.

\begin{proposition}
\label{prop:xequalsxi}
Let $P$ be a
poset whose vertex set~$V$ is a finite subset of~$\mathbb{P}$.
Let $D(P)$ be the digraph on~$V$
that has an edge $u\to v$ if and only if $v\prec u$ in~$P$.
Let $G(P)$ be the incomparability graph of~$P$.
Then $\Xi_{D(P)}(\mathbf{x},t) = X_{G(P)}(\mathbf{x}, t)$.
\end{proposition}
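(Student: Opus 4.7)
The plan is to exhibit a statistic-preserving bijection between the combinatorial objects that index the two sides.

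First I would unwind what the two sums are counting. On the $X_{G(P)}$ side, a stable set of the incomparability graph $G(P)$ is precisely a chain of $P$, so the sum ranges over ordered partitions $\sigma=(\sigma_1,\ldots,\sigma_\ell)$ of $V$ into chains, weighted by $t^{\asc\sigma}M_{|\sigma_1|,\ldots,|\sigma_\ell|}$. On the $\Xi_{D(P)}$ side, an edge $\sq(k)\to\sq(k+1)$ of $D(P)$ means $\sq(k+1)\prec \sq(k)$ in $P$, so each block $\sq(\beta_{i-1}+1),\ldots,\sq(\beta_i)$ of the sequencing is a strictly $\prec$-decreasing chain of $P$; the weight is $t^{\asc\sq}M_\beta$.

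I would then define the bijection. Given an ordered partition $\sigma$ into chains, there is exactly one way to list each $\sigma_i$ so that it becomes a directed path in $D(P)$, namely in $\prec$-decreasing order; concatenating these orderings produces a sequencing $\sq$ with composition $\beta=(|\sigma_1|,\ldots,|\sigma_\ell|)$. Conversely, given $(\sq,\beta)$, setting $\sigma_i=\{\sq(\beta_{i-1}+1),\ldots,\sq(\beta_i)\}$ recovers an ordered partition into chains, since a directed path in $D(P)$ is exactly a $\prec$-decreasing chain. These assignments are mutually inverse and the monomial quasisymmetric factor matches on the nose.

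It remains to match the ascent statistic. For the digraph $D(P)$, note that $u\to v$ and $v\to u$ cannot both occur (they would force $v\prec u$ and $u\prec v$), so condition (1) in the definition of $\asc\sq$ reduces to neither edge being present, i.e., $u$ and $v$ being incomparable in $P$. Thus $\asc\sq$ counts pairs $\{u,v\}$ with $u,v$ incomparable in $P$, $u<v$, and $v$ later in $\sq$. The statistic $\asc\sigma$ counts pairs $\{u,v\}$ with $u,v$ incomparable in $P$ (the edges of $G(P)$), $u<v$, and $v$ in a strictly later part of $\sigma$. Since any two vertices in the same $\sigma_i$ are comparable in $P$, an incomparable pair $\{u,v\}$ must straddle two distinct blocks, in which case ``later block of $\sigma$'' and ``later position in $\sq$'' agree. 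Hence $\asc\sigma=\asc\sq$, completing the proof.

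The only subtle point, and the step I would check most carefully, is the equivalence between condition (1) of $\asc\sq$ and incomparability in $P$; everything else is essentially bookkeeping once the bijection is in hand.
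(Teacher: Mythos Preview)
Your proposal is correct and follows essentially the same approach as the paper's own proof sketch: both set up the bijection between ordered stable-set partitions and ordered path covers via the unique $\prec$-decreasing listing of each chain, and both verify the ascent match by reducing condition~(1) to incomparability (using antisymmetry of $\prec$) and then observing that incomparable pairs cannot lie in the same block.
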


\begin{proof}[Proof (sketch)]
The proof is mostly a routine verification
that the two definitions coincide in this special case.
Only a few points require some attention.
First, if $S$ is a stable subset in~$G(P)$,
then $S$ is a totally ordered subset of~$P$,
and hence there is exactly one directed path in~$D(P)$
through the vertices of~$S$.
Hence ordered partitions~$\sigma$ of~$V$
such that every $\sigma_i$ is a stable set of~$G(P)$
are in bijective correspondence with
ordered path covers $(\sq,\alpha)$ of~$D(P)$.
Second, because $P$ is a poset, it is not possible
for $u\prec v$ and $v\prec u$ simultaneously,
so the condition that ``either $u\to v$ and $v\to u$
are both edges of~$D(P)$
or neither one is'' is equivalent to adjacency in~$G(P)$.
Third, one might worry that $\asc\sq$ counts some pairs
$\{u,v\}$ where $v$ appears later in the \emph{sequencing}
but in the same \emph{path} while $\asc\sigma$ counts only
pairs from different parts, but in fact this cannot happen
because vertices in the same path are part of the same
totally ordered subset of~$P$ and thus have a directed edge
between them in exactly one direction.
\end{proof}

Although we are ultimately interested in expansions in terms of
monomial \emph{symmetric} functions,
it turns out that the proofs are more naturally stated in terms of
monomial \emph{quasisymmetric} functions.
So we need to describe the action of~$\omega$
on monomial quasisymmetric functions.

\begin{definition}
The linear map $\omega$ on quasisymmetric functions is defined by the
following action on monomial quasisymmetric functions.
\begin{equation}
\label{eq:omega}
\omega M_\beta := (-1)^{|\beta|} \sum_{\alpha\le \beta} M_\alpha.
\end{equation}
\end{definition}

It is known (e.g., see the proof of~\cite[Theorem 4.2]{stanley-xg})
that the usual map~$\omega$ is characterized by the
equation $\omega F_\alpha = F_{\overline\alpha}$,
so the following proposition confirms that
our definition of $\omega$ coincides with the standard one.

\begin{proposition}
$\omega F_\alpha  = F_{\overline\alpha}$.
\end{proposition}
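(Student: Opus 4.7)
My plan is to expand $F_\alpha$ using its definition~\eqref{eq:fundamental} as a sum of monomial quasisymmetric functions, apply $\omega$ term by term using~\eqref{eq:omega}, and then swap the order of summation so that the result reads $\sum_\gamma (\text{coefficient})\, M_\gamma$. The coefficient will be an alternating sum over compositions $\beta$ satisfying both $\beta \ge \alpha$ and $\beta \ge \gamma$; I expect it to vanish except exactly when $\gamma \ge \overline\alpha$, which would leave $F_{\overline\alpha}$ as claimed.

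Concretely, substituting gives
\[
\omega F_\alpha
\;=\; \sum_{\beta \ge \alpha} \omega M_\beta
\;=\; \sum_{\beta \ge \alpha} (-1)^{|\beta|} \sum_{\gamma \le \beta} M_\gamma
\;=\; \sum_\gamma M_\gamma \sum_{\substack{\beta \ge \alpha \\ \beta \ge \gamma}} (-1)^{|\beta|}.
\]
Following Section~\ref{gn}, identify each composition of~$n$ with its bar set inside $[n-1]$, and write $S_\alpha$, $S_\gamma$, $S_\beta$ for these subsets. Then the conditions $\beta \ge \alpha$ and $\beta \ge \gamma$ together say $S_\beta \supseteq S_\alpha \cup S_\gamma$, and $|\beta| = |S_\beta|$. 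Parametrizing $S_\beta = (S_\alpha \cup S_\gamma) \cup T$ with $T$ ranging over subsets of $U := [n-1]\setminus(S_\alpha\cup S_\gamma)$, the inner sum factors as $(-1)^{|S_\alpha\cup S_\gamma|}\sum_{T\subseteq U}(-1)^{|T|}$. By the standard binomial identity $\sum_{T\subseteq U}(-1)^{|T|} = 0$ whenever $U \ne \emptyset$, only those $\gamma$ with $S_\alpha\cup S_\gamma = [n-1]$ contribute.

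But $S_\alpha\cup S_\gamma = [n-1]$ is precisely $S_\gamma \supseteq [n-1]\setminus S_\alpha = S_{\overline\alpha}$, i.e.\ $\gamma \ge \overline\alpha$. For each such $\gamma$, the inner sum collapses to a single nonzero term; a sign check against the bar-set conventions of Section~\ref{gn} then shows that the coefficient of $M_\gamma$ is exactly $1$. Summing yields
\[
\omega F_\alpha \;=\; \sum_{\gamma \ge \overline\alpha} M_\gamma \;=\; F_{\overline\alpha}
\]
by~\eqref{eq:fundamental}. The entire argument is a formal inclusion--exclusion on the Boolean lattice of bar sets, so no serious obstacle is expected beyond careful bookkeeping of bar-set complements and of the sign in~\eqref{eq:omega}.
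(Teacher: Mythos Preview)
Your proof is essentially identical to the paper's: both expand $F_\alpha$ via~\eqref{eq:fundamental}, apply~\eqref{eq:omega} term by term, interchange summations, and evaluate the resulting alternating sum over $\beta \ge \alpha\cup\gamma$ to see that only $\gamma \ge \overline\alpha$ survives. The only difference is cosmetic---you write out the bar-set parametrization $S_\beta = (S_\alpha\cup S_\gamma)\cup T$ explicitly, whereas the paper states the result of the inner sum directly.
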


\begin{proof}
Applying $\omega$ to Equation~(\ref{eq:fundamental})
and invoking Equation~(\ref{eq:omega}) yields
\[
   \omega F_\alpha = \sum_{\beta\ge\alpha} \omega M_\beta
   = \sum_{\beta\ge\alpha} (-1)^{|\beta|} \sum_{\gamma\le\beta} M_\gamma.
\]
So the coefficient of~$M_\gamma$ in $\omega F_\alpha$ is
\[
   \sum_{\beta: \text{($\beta\ge\alpha$ and $\beta\ge\gamma$})}
     (-1)^{|\beta|}
   = \sum_{\beta \ge \alpha \cup \gamma} (-1)^{|\beta|} = 
\begin{cases}
1, &\text{if $\overline{\alpha \cup \gamma} = \varnothing$;}\\
0, &\text{otherwise.}
\end{cases}
\]
But $\overline{\alpha\cup \gamma} = \varnothing$ is equivalent to
$\gamma\ge\overline\alpha$, so
$\omega F_\alpha = \sum_{\gamma\ge\overline\alpha} M_\gamma =
F_{\overline\alpha}$.
\end{proof}

We are ready for the reciprocity theorem for~$\Xi_D(t)$.

\begin{theorem}
\label{thm:reciprocity}
Let $D$ be a digraph whose vertex set~$V$ is a subset of~$\mathbb{P}$.
Then $\omega\Xi_D(\mathbf{x},t) = \Xi_{\overline D}(\mathbf{x}, t)$.
\end{theorem}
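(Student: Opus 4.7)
The plan is a direct expansion that parallels the proof of $\omega F_\alpha = F_{\overline\alpha}$ given just above: apply the definition~(\ref{eq:omega}) of $\omega$ term by term to the defining sum~(\ref{eq:xidt}) of $\Xi_D(\mathbf{x},t)$, interchange the order of summation, and use the same routine subset inclusion--exclusion to identify the surviving terms as exactly the ordered path covers of the complementary digraph $\overline D$.

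Concretely, applying $\omega$ yields
\[ \omega\Xi_D(\mathbf{x},t) = \sum_{(\sq,\beta)} t^{\asc\sq}\,(-1)^{|\beta|}\sum_{\alpha\le\beta} M_\alpha, \]
with $(\sq,\beta)$ running over ordered path covers of $D$. The key combinatorial point is that, for a fixed sequencing $\sq$, the pair $(\sq,\beta)$ is an ordered path cover of $D$ if and only if every position $i\in[n-1]$ that is \emph{not} a bar of $\beta$ satisfies $\sq(i)\to\sq(i+1)\in D$. Equivalently, writing $C_D(\sq)$ for the set of positions $i$ at which $\sq(i)\to\sq(i+1)$ is \emph{not} an edge of $D$ (and hence \emph{is} an edge of $\overline D$), the path-cover condition becomes: the bar set of $\beta$ contains $C_D(\sq)$. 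Interchanging summation so as to hold $(\sq,\alpha)$ fixed, the coefficient of $t^{\asc\sq} M_\alpha$ in $\omega\Xi_D$ collapses to
\[ \sum_{\beta:\ \mathrm{bars}(\beta)\,\supseteq\,\mathrm{bars}(\alpha)\,\cup\,C_D(\sq)} (-1)^{|\beta|}, \]
which vanishes, by exactly the inclusion--exclusion argument used above, unless $\mathrm{bars}(\alpha)\cup C_D(\sq)=[n-1]$, in which case it equals $1$.

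The condition $\mathrm{bars}(\alpha)\cup C_D(\sq)=[n-1]$ says precisely that every non-bar position of $\alpha$ lies in $C_D(\sq)$, i.e., corresponds to an edge of $\overline D$ along $\sq$; this is exactly the condition that $(\sq,\alpha)$ is itself an ordered path cover of $\overline D$. It remains to verify that the weight $t^{\asc\sq}$ is unchanged under $D\leftrightarrow\overline D$: the pairing condition ``either $u\to v$ and $v\to u$ are both edges, or neither is'' is manifestly self-complementary, so $\asc\sq$ has the same value whether computed with respect to $D$ or to $\overline D$. Assembling the surviving terms then gives $\omega\Xi_D(\mathbf{x},t)=\Xi_{\overline D}(\mathbf{x},t)$.

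The main obstacle is minor and purely bookkeeping: keeping track of which non-bar positions of $\beta$ lie in $C_D(\sq)$, and aligning the inclusion--exclusion sign with the sign convention in~(\ref{eq:omega}). As a sanity check, the specialization $t=1$ should reduce to Chow's original reciprocity theorem restricted to its path-only part.
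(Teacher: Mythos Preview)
Your proposal is correct and follows essentially the same route as the paper's own proof: apply the definition of $\omega$ termwise to~(\ref{eq:xidt}), interchange the order of summation, collapse the inner alternating sum by inclusion--exclusion (using that the set of refinements $\beta\ge\alpha$ that remain path covers of $D$ is an upper interval), and note that $\asc\sq$ is invariant under passing to the complement. Your introduction of the set $C_D(\sq)$ makes the upper-interval structure a bit more explicit than the paper's phrasing, but the two arguments are the same in substance.
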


\begin{proof}
Let us apply $\omega$ to both sides of Equation~(\ref{eq:xidt})
and invoke the definition of~$\omega$.
\[
\omega \Xi_D(\mathbf{x},t) = \sum_{(\sq,\beta)} t^{\asc \sq} \omega M_\beta
   = \sum_{(\sq,\beta)} t^{\asc \sq} (-1)^{|\beta|}
      \sum_{\alpha\le\beta} M_\alpha.
\]
Now we interchange the order of summation; i.e., we want to
compute the coefficient of $M_\alpha$ in $\omega \Xi_D(t)$.
This involves a sum over all ordered path covers $(\sq,\beta)$
such that $\beta\ge\alpha$.
The summands involving a fixed sequencing~$\sq$ are
\begin{equation}
\label{eq:alternatingsum}
   \sum_{\beta\ge\alpha} t^{\asc \sq} (-1)^{|\beta|}
    = t^{\asc \sq} \sum_{\beta\ge\alpha} (-1)^{|\beta|}.
\end{equation}
Now note that if $(\sq,\alpha)$ is an ordered path cover
and $\beta$ is any composition such that $\beta\ge\alpha$,
then $(\sq,\beta)$ is also an ordered path cover,
because deleting an edge from a directed path simply
subdivides it into two smaller directed paths.
Therefore the alternating sum in Equation~(\ref{eq:alternatingsum})
is zero unless the only $\beta\ge\alpha$
for which $(\sq,\beta)$ is an ordered path cover
is the maximal composition ($\beta_i = 1$ for all~$i$),
in which case the alternating sum equals one.
But this condition is equivalent to the condition
that there is no directed edge in~$D$ between
any consecutive vertices in the sequencing~$\sq$
that are in the same segment of~$\alpha$,
i.e., that $(\sq,\alpha)$ is an ordered path cover of~$\overline D$.
Finally, note that the definition of $\asc \sq$
is invariant under taking complements of the digraph.
The theorem follows.
\end{proof}

Theorem~\ref{thm:reciprocity} gives us
a nice combinatorial interpretation of $c_{d,\lambda}(\bm)$.

\begin{corollary}
\label{cor:asc}
Let $\bm$ be a Hessenberg function, and
let $D(\bm)$ denote the digraph on~$[n]$
that has an edge $u\to v$ if and only if $v\prec u$ in~$P$.
Then for any composition $\alpha$ whose parts are a permutation of
the parts of~$\lambda$, $c_{d,\lambda}(\bm)$ equals
the number of ordered path covers $(\sq,\alpha)$ of $\overline{D(\bm)}$
with $\asc \sq = d$.
\end{corollary}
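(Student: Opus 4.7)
The plan is to combine Proposition~\ref{prop:xequalsxi} with the reciprocity Theorem~\ref{thm:reciprocity}, and then read off the coefficient of $t^d m_\lambda$ by passing to the monomial quasisymmetric basis. Since $P(\bm)$ is a poset with incomparability graph $G(\bm)$, and the digraph $D(\bm)$ defined in the corollary coincides with $D(P(\bm))$ as used in Proposition~\ref{prop:xequalsxi}, that proposition gives $X_{G(\bm)}(\mathbf{x},t) = \Xi_{D(\bm)}(\mathbf{x},t)$. Applying $\omega$ and invoking Theorem~\ref{thm:reciprocity} then yields
\[
\omega X_{G(\bm)}(\mathbf{x},t) \;=\; \omega \Xi_{D(\bm)}(\mathbf{x},t) \;=\; \Xi_{\overline{D(\bm)}}(\mathbf{x},t).
\]

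Next I would expand both sides in the monomial quasisymmetric basis and match coefficients. The elementary identity $m_\lambda = \sum_\alpha M_\alpha$, with the sum running over all compositions $\alpha$ that rearrange $\lambda$, implies that for any symmetric function $f = \sum_\lambda c_\lambda m_\lambda$, the monomial quasisymmetric expansion is $f = \sum_\alpha c_{\lambda(\alpha)} M_\alpha$, where $\lambda(\alpha)$ denotes the partition obtained by sorting $\alpha$. Applied to $f = \omega X_{G(\bm)}(\mathbf{x},t)$, this says that for any composition $\alpha$ whose parts rearrange the parts of $\lambda$, the coefficient of $M_\alpha$ equals $\sum_d c_{d,\lambda}(\bm)\, t^d$.

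Finally, I would read off the $M_\alpha$-coefficient of the right-hand side directly from Equation~(\ref{eq:xidt}) applied to $\overline{D(\bm)}$: by construction that coefficient is $\sum_{\sq} t^{\asc \sq}$, where the sum ranges over all sequencings $\sq$ of $[n]$ such that $(\sq,\alpha)$ is an ordered path cover of $\overline{D(\bm)}$. Extracting the coefficient of $t^d$ and comparing with the preceding paragraph gives precisely the enumeration claimed in the corollary.

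I do not anticipate a substantive obstacle, since the argument is essentially formal once Theorem~\ref{thm:reciprocity} is available. The one point worth double-checking is that $\omega X_{G(\bm)}(\mathbf{x},t)$ is honestly symmetric, so that its $M_\alpha$-coefficient depends only on $\lambda(\alpha)$ and the statement's independence from the choice of $\alpha$ is meaningful. This is immediate from the symmetry of $X_{G(\bm)}(\mathbf{x},t)$, together with the fact that the operator $\omega$ of Equation~(\ref{eq:omega}) restricts to the classical involution on symmetric functions, as verified in the proposition preceding Theorem~\ref{thm:reciprocity}.
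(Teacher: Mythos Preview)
Your proposal is correct and follows essentially the same route as the paper: invoke Proposition~\ref{prop:xequalsxi}, apply Theorem~\ref{thm:reciprocity}, and use the symmetry of $X_{G(\bm)}(\mathbf{x},t)$ to pass between the $m_\lambda$-coefficient and the $M_\alpha$-coefficient. The paper is terser about the last step and explicitly cites \cite[Theorem~4.5]{shareshian-wachs} for the symmetry of $X_{G(\bm)}(\mathbf{x},t)$, which you correctly flag as the one point needing external input.
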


\begin{proof}
By Proposition~\ref{prop:xequalsxi},
we know that $X_{G(\bm)}(t) = \Xi_{D(\bm)}(t)$.
By Theorem~\ref{swsym}, 
$X_{G(\bm)}(t)$ is actually a \emph{symmetric} function (whose coefficients
are polynomials in~$t$).
Therefore $\omega X_{G(\bm)}(t) = \omega\Xi_{D(\bm)}(t)$
is also a symmetric function, and the coefficient of $t^d m_\lambda$
equals the coefficient of $t^d M_\alpha$ for any composition~$\alpha$
whose parts are a permutation of the parts of~$\lambda$.
The result then follows from Theorem~\ref{thm:reciprocity}.
\end{proof}

\begin{corollary}
\label{cor:des}
For a sequencing~$\sq$ of a digraph
whose vertex set is a subset of~$\mathbb{P}$,
let the definition of $\des \sq$ be the same as
the definition of $\asc \sq$ except with
``$u<v$'' replaced by ``$v<u$.''
then Corollary~\ref{cor:asc} holds with $\des \sq$ in place of $\asc \sq$.
\end{corollary}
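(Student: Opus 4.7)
My plan is to deduce Corollary~\ref{cor:des} from Corollary~\ref{cor:asc} using the palindromicity of $\omega X_{G(\bm)}(t)$. The central observation is that $\asc \sq + \des \sq$ is independent of $\sq$ and equals the number of unordered pairs $\{u,v\}$ of vertices of the digraph satisfying condition~1 in the definition of $\asc$. Indeed, for any such pair, say with labels $a<b$, exactly one of ``$a$ appears before $b$ in $\sq$'' (which contributes to $\asc \sq$, taking $u=a,v=b$) or ``$b$ appears before $a$ in $\sq$'' (which contributes to $\des \sq$, taking $u=b,v=a$) must hold. So $\asc \sq + \des \sq$ depends only on the digraph.

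For $D = \overline{D(\bm)}$, I would note that condition~1 is invariant under taking complements, since having both of $u\to v, v\to u$ in $D$ corresponds to having neither in $\overline D$ and vice versa. Because $P(\bm)$ is a poset, $D(\bm)$ has no bidirected edges, so the pairs satisfying condition~1 are exactly the incomparable pairs of $P(\bm)$, i.e.\ the edges of the incomparability graph $G(\bm)$. Setting $e := |E(G(\bm))|$, we conclude that $\asc \sq + \des \sq = e$ for every ordered path cover $(\sq,\alpha)$ of $\overline{D(\bm)}$.

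Granting this identity, the number of ordered path covers $(\sq,\alpha)$ of $\overline{D(\bm)}$ with $\des \sq = d$ equals the number with $\asc \sq = e-d$, which by Corollary~\ref{cor:asc} equals $c_{e-d,\lambda}(\bm)$. To finish, I would invoke the theorem of Shareshian and Wachs that $X_{G(\bm)}(t)$ is palindromic of degree $e$ in $t$ (with symmetric function coefficients); since $\omega$ acts coefficient-by-coefficient in $t$, the polynomial $\omega X_{G(\bm)}(t)$ is palindromic too, giving $c_{e-d,\lambda}(\bm) = c_{d,\lambda}(\bm)$. I do not expect any serious obstacle here: the constancy of $\asc + \des$ is a definition check, and palindromicity is a ready-made input from the Shareshian--Wachs paper.
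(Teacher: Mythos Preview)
Your argument is correct, but it takes a different route from the paper's proof. The paper observes that Shareshian and Wachs prove directly (their Corollary~2.7) that $X_G(t)$ is unchanged when $\asc$ is replaced by $\des$ in the defining sum, and then notes that the proofs of Proposition~\ref{prop:xequalsxi} and Theorem~\ref{thm:reciprocity} go through verbatim with $\des$ in place of $\asc$; this yields Corollary~\ref{cor:des} immediately. Your approach instead uses the complementary identity $\asc\sq+\des\sq=e$ (a pleasant observation, and your verification of it is fine) to convert a $\des$-count into an $\asc$-count at the reflected degree $e-d$, and then invokes the Shareshian--Wachs palindromicity result (their Corollary~4.6, also used later in this paper in Corollary~\ref{pbetti}) to flip $c_{e-d,\lambda}(\bm)$ back to $c_{d,\lambda}(\bm)$. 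Both proofs ultimately rest on a Shareshian--Wachs symmetry; the paper's route is shorter because it avoids passing through palindromicity, while yours has the virtue of isolating the elementary combinatorial identity $\asc+\des=e$ and making explicit that the $\des$ statement is equivalent to palindromicity once Corollary~\ref{cor:asc} is known.
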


\begin{proof}
For a proper coloring $\kappa$ of a graph
whose vertex set is a subset of~$\mathbb{P}$,
let the definition of $\des\kappa$ be the same as
the definition of $\asc\kappa$ except with
``$\kappa(u)<\kappa(v)$'' replaced by ``$\kappa(u) > \kappa(v)$.''
Shareshian and Wachs prove~\cite[Corollary 2.7]{shareshian-wachs}
that the value of $X_G(t)$ is unchanged if ``asc'' is replaced by ``des.''
It is readily checked that the proofs of
Proposition~\ref{prop:xequalsxi} and Theorem~\ref{thm:reciprocity}
go through if ``asc'' is replaced by ``des'' everywhere.
\end{proof}

As we remarked before, Corollaries \ref{cor:asc} and~\ref{cor:des}
can be derived from Shareshian--Wachs
\cite[Theorem 3.1]{shareshian-wachs}, but 
we have taken our approach because we believe that
Theorem~\ref{thm:reciprocity} is of independent interest.

\section{Betti numbers of regular Hessenberg varieties}\label{sbetti}

The main result of this section is that if
$\sH(\bm,s)$ is a regular Hessenberg variety
and $s$ has Jordan type~$\lambda$,
then its Betti number $\beta_{2d}$ 
equals $c_{d,\lambda}(\bm)$.

Tymoczko~\cite[Theorem 7.1]{tymoczko-linear}
has already done a lot of the work needed to prove
this result, by showing that Hessenberg varieties admit a paving
(or cellular decomposition) by affine spaces,
and obtaining a combinatorial interpretation
of the dimensions of the cells.
For regular Hessenberg varieties,
Tymoczko's theorem simplifies as follows.
If $\lambda$ is an integer partition of~$n$ then
by a \emph{tableau of shape~$\lambda$}
we mean any filling of the boxes of the Young diagram of~$\lambda$
with one copy each of the numbers $1,2,\ldots, n$.

\begin{theorem}[Tymoczko]
\label{thm:tymoczko}
Let $\sH(\bm,s)$ be a regular Hessenberg variety
and let the partition $\lambda$ encode the sizes of the Jordan blocks of~$s$.
Then $\sH(\bm,s)$ is paved by affines.
The nonempty cells are in bijection with tableaux~$T$ of shape~$\lambda$
with the property that $k$ appears in the box immediately to the left of~$j$
only if $k\le m_j$.  The dimension of a nonempty cell is the sum of:
\begin{enumerate}
\item the number of pairs $i,k$ in~$T$ such that
\begin{enumerate}
\item $i$ and $k$ are in the same row,
\item $i$ appears somewhere to the left of~$k$,
\item $k<i$, and
\item if $j$ is in the box immediately to the right of~$k$ then $i\le m_j$;
\end{enumerate}
\item the number of pairs $i,k$ in~$T$ such that
\begin{enumerate}
\item $i$ appears in a lower row than $k$, and
\item $k < i \le m_k$.
\end{enumerate}
\end{enumerate}
\end{theorem}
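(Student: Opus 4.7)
The plan is to build an affine paving of $\sH(\bm,s)$ by intersecting it with the Schubert stratification of the ambient flag variety, identify the nonempty cells with the tableaux of shape $\lambda$ described in the statement, and extract the dimension formula from the count of surviving Schubert coordinates.

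First I would place $s$ in Jordan normal form, writing $\mathbb{C}^n = V_1 \oplus \cdots \oplus V_\ell$ with $s|_{V_i}$ a single Jordan block on $V_i$ of size $\lambda_i$ and eigenvalue $\mu_i$, all $\mu_i$ distinct (by regularity). Fix a Jordan basis $\{e_{i,r}\}$ normalized so that the nilpotent part of $s$ shifts each basis vector to its predecessor, $s e_{i,r} = \mu_i e_{i,r} + e_{i,r-1}$ (with $e_{i,0} := 0$), and order the basis by reading the boxes of the Young diagram of $\lambda$ row by row. This identifies the Jordan basis with the standard basis of $\mathbb{C}^n$, fixes a standard flag and Borel subgroup $B$, and produces the Schubert decomposition $G/B = \bigsqcup_{w \in S_n} C_w$.

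Next I would analyze each intersection $\sH(\bm,s) \cap C_w$ by writing a flag $F \in C_w$ in standard Schubert coordinates $(x_{i,j})$ and imposing the conditions $sF_i \subseteq F_{m_i}$. Because $s$ acts on the Jordan basis by a scalar plus a shift to the previous box, these conditions translate into \emph{linear} equations in the $x_{i,j}$; a direct column-reduction argument exploiting the distinctness of the $\mu_i$ shows the linear system is either inconsistent (so the intersection is empty) or solvable in triangular fashion (so the intersection is affine of computable dimension). The nonempty case occurs exactly when the tableau $T$ of shape $\lambda$ obtained by putting the step number $w^{-1}(b)$ into box $b$ satisfies ``$k$ immediately to the left of $j$ implies $k \le m_j$,'' since this is exactly what is needed for the shift $e_p \mapsto e_{p-1}$ to respect the Hessenberg inclusion at the step when $e_p$ enters the flag.

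The dimension of each nonempty cell is then the number of free Schubert coordinates; bookkeeping through the echelon form of the linear system should show that the surviving coordinates are in bijection with the pairs $(i,k)$ enumerated by (1) and (2) in the statement, yielding the horizontal (same-row) and vertical (different-row) contributions. The main obstacle is the echelon analysis itself: for each coordinate $x_{i,j}$ one must decide whether it survives as a free parameter or is pinned down by an equation, and verify that the surviving pairs match exactly those in (1) and (2) — this is where the two somewhat intricate conditions of the theorem ($i \le m_j$ for the box to the right of $k$ in case (1), and $k < i \le m_k$ in case (2)) must emerge from the same-row versus cross-row structure of the elimination. Once this analysis is in hand, both the tableau bijection and the dimension formula follow by a direct count of the remaining pairs.
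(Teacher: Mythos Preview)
The paper does not give its own proof of this statement; it is quoted as Tymoczko's theorem and attributed to \cite[Theorem~7.1]{tymoczko-linear}, with the paper only specializing her general result to the regular case. So there is no in-paper proof to compare against.

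Your outline is in the spirit of Tymoczko's argument: fix a Jordan basis for $s$, intersect the Hessenberg variety with the Schubert cells for the corresponding Borel, and show each nonempty intersection is an affine space whose dimension can be read off combinatorially. Two points are worth flagging. First, the choice of basis ordering is not innocuous: Tymoczko's proof hinges on a carefully chosen order (her ``highest forms'' setup), and for a generic ordering the intersections $C_w\cap\sH(\bm,s)$ need not be affine. In the regular case the row-by-row order does work, but your write-up should justify this rather than assert it. Second, as you yourself note, the entire content of the theorem lives in the ``echelon analysis'': showing that the Hessenberg conditions cut out each Schubert cell by a triangular linear system, identifying exactly when the system is consistent (the adjacency condition $k\le m_j$), and matching the free coordinates to the two families of pairs in the statement. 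Your proposal describes the shape of this computation but does not carry it out, so as written it is an outline rather than a proof. To complete it you would need to do the bookkeeping in detail or cite Tymoczko's paper for it, which is what the present paper does.
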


It remains for us to establish a correspondence between the combinatorics
of Theorem~\ref{thm:tymoczko} and the combinatorics of~$\omega X_{G(\bm)}(t)$,
or equivalently (by the results of the previous section) the
combinatorics of ordered path covers.

\begin{definition}
  If $X$ is a topological space and $i$ is an integer, we write
$\beta_i$ or $\beta_i(X)$ for the $i$th Betti number $\dim \HH^i(X,\mathbb{C})$
of $X$.  
\end{definition}

\begin{theorem}
\label{thm:betti}
Let $\sH(\bm,s)$ be a regular Hessenberg variety and let the Jordan
type of~$s$ be~$\lambda$.  Then the Betti number $\beta_{2d}$ of
$\sH(\bm,s)$ equals $c_{d,\lambda}(\bm)$, and $\beta_i=0$ for $i$ odd.
\end{theorem}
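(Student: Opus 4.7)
The plan is to match two combinatorial descriptions. By Tymoczko's paving theorem (Theorem~\ref{thm:tymoczko}), $\sH(\bm,s)$ admits a cell decomposition by affines, so $\beta_i=0$ for $i$ odd and $\beta_{2d}$ equals the number of Tymoczko-valid tableaux of shape~$\lambda$ whose dimension statistic equals~$d$. By Corollary~\ref{cor:des}, $c_{d,\lambda}(\bm)$ equals the number of ordered path covers $(\sq,\alpha)$ of $\overline{D(\bm)}$ with $\des\sq=d$, where $\alpha$ is any composition permuting the parts of~$\lambda$; I fix $\alpha=(\lambda_\ell,\ldots,\lambda_1)$, the reverse of~$\lambda$.

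The bijection sends a Tymoczko-valid tableau $T$ of shape~$\lambda$ to $(\sq_T,\alpha)$, where $\sq_T$ is the concatenation of the rows of~$T$ read from bottom to top (each row read left to right). The Hessenberg condition on~$T$, that $k\le m_j$ whenever $k$ lies immediately left of~$j$, is exactly the condition for a directed edge $k\to j$ in $\overline{D(\bm)}$, so each row of $T$ forms a directed path and the inverse simply splits $\sq$ according to $\alpha$ and stacks the resulting paths from bottom to top. To match statistics I decompose both $\dim T$ and $\des\sq_T$ into within-row and cross-row pieces. Tymoczko's type~(2) condition, namely $i$ in a strictly lower row than $k$ and $k<i\le m_k$, corresponds under the substitution $u=k$, $v=i$ exactly to a cross-row des pair: the row of $v$ lies geometrically below the row of $u$ in $T$, hence is read first in the bottom-up order, placing $u$ later than $v$ in~$\sq_T$. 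By contrast, Tymoczko type~(1) pairs and within-row des pairs have different per-pair conditions---the former requires $i\le m_j$ where $j$ is the box immediately right of $k$ (or no such $j$ exists), while the latter requires $v=i\le m_k$---so the pairs do not match one-for-one.

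The main obstacle is therefore to show the row-wise identity
\[
\#\{(s,t):s<t,\ a_s>a_t,\ t=r\text{ or }a_s\le m_{a_{t+1}}\}=\#\{(s,t):s<t,\ a_s>a_t,\ a_s\le m_{a_t}\}
\]
for every directed path $(a_1,\ldots,a_r)$ in $\overline{D(\bm)}$. I would prove this by induction on~$r$, comparing the counts for $(a_1,\ldots,a_r)$ with those for $(a_1,\ldots,a_{r-1})$. Appending $a_r$ increases the left side by $\#\{s<r:a_s>a_r\}-\#\{s<r-1:a_s>a_{r-1},\,a_s>m_{a_r}\}$ and the right side by $\#\{s<r:a_s>a_r,\,a_s\le m_{a_r}\}$. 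Using $m_{a_r}\ge a_r$, the change in the difference simplifies to $\#\{s<r:a_s>m_{a_r}\}-\#\{s<r-1:a_s>a_{r-1},\,a_s>m_{a_r}\}$; invoking the path condition $a_{r-1}\le m_{a_r}$ lets me drop the clause $a_s>a_{r-1}$ from the second count (since $a_s>m_{a_r}\ge a_{r-1}$ forces it), so the expression collapses to $[a_{r-1}>m_{a_r}]=0$. Summing this identity over all rows of $T$ yields $\dim T=\des\sq_T$, which together with the bijection gives $\beta_{2d}=c_{d,\lambda}(\bm)$ as desired.
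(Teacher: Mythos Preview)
Your argument is correct and follows the same overall architecture as the paper: the same choice $\alpha=(\lambda_\ell,\ldots,\lambda_1)$, the same bijection between Tymoczko tableaux and ordered path covers (rows read bottom-to-top), the same verification that the Hessenberg condition on adjacent boxes coincides with the edge condition in $\overline{D(\bm)}$, and the same direct identification of Tymoczko's type~(2) pairs with cross-row $\des$ pairs. (Your labeling $u=k$, $v=i$ in that passage is slightly garbled relative to the paper's conventions for $\des$, but the conclusion you draw is the right one.)

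The genuine difference is in the within-row step. The paper does \emph{not} prove the row-wise identity by induction; instead it constructs an explicit bijection between same-row T-inversions and same-row SW-inversions: given an SW-inversion $(i,k)$, it scans rightward along the row from $k$ through $k_1,k_2,\ldots$ until it reaches the first position $k_j$ for which $i\le m_{k_{j+1}}$ (or the end of the row), and shows $(i,k_j)$ is a T-inversion; injectivity and surjectivity are checked by observing that no other SW-inversions are passed over en route and by describing the inverse leftward scan. Your inductive proof is a clean alternative: the telescoping computation you give, using $m_{a_r}\ge a_r$ and the path condition $a_{r-1}\le m_{a_r}$, is correct and arguably more direct. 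What the paper's bijection buys is a structural payoff you do not get from the counting argument: it shows that in the regular case Tymoczko's two dimension rules can be \emph{unified} into the single rule ``$i$ appears to the left of or below $k$ and $k<i\le m_k$,'' since the bijection matches each same-row T-inversion with an SW-inversion satisfying exactly that condition.
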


\begin{proof}
As Tymoczko~\cite[Proposition 2.2]{tymoczko-linear} mentions,
it is well known that if we have a paving by affines,
then $\beta_{2d}$ is just the number of nonempty cells
with dimension~$d$.
Furthermore, $\beta_i=0$ for $i$ odd
\cite[Corollary 6.2]{tymoczko-linear}.
On the other hand, by Corollaries \ref{cor:asc} and~\ref{cor:des},
we know that $c_{d,\lambda}(\bm)$ is the number of
ordered path covers $(\sq,\alpha)$ of $\overline{D(\bm)}$
with $\des \sq = d$,
where we may take the parts of the composition~$\alpha$
to be any permutation of the parts of~$\lambda$.
So it suffices to show, firstly,
that there is a bijection between
nonempty cells and ordered path covers $(\sq,\alpha)$,
and secondly, that under this bijection,
the dimension of the nonempty cell is equal to $\des\sq$.

First we should specify~$\alpha$.
If $\lambda$ has $\ell$ parts $\lambda_1, \ldots, \lambda_\ell$,
we set $\alpha_i := \lambda_{\ell+1-i}$.
That is, the parts of~$\alpha$ are the parts of~$\lambda$
in \emph{reverse} order.

Instead of nonempty cells, we use the tableaux~$T$
of Theorem~\ref{thm:tymoczko} to describe our bijection.
Given an ordered path cover $(\sq,\alpha)$ of $\overline{D(\bm)}$,
take the elements of the $i$th path
\[ \sq(\alpha_{i-1}+1)\to \sq(\alpha_{i-1}+2)\to \cdots\to \sq(\alpha_i)\]
and place them from left to right in the $i$th row (from the bottom) of~$T$.
We need to verify that Tymoczko's condition $k\le m_j$
is equivalent to the condition that $k\to j$ is
a directed edge in $\overline{D(\bm)}$.
By definition, there is a directed edge $k\to j$ in $\overline{D(\bm)}$
if and only if there is \emph{not} a directed edge $k\to j$ in~$D(\bm)$,
i.e., if and only if either $k$ and~$j$ are incomparable in~$P(\bm)$,
or $k\prec j$ in~$P(\bm)$.
The only way this property can fail is if $j\prec k$ in~$P(\bm)$,
i.e., if $k > m_j$.  So indeed the conditions are equivalent.

Let us call a pair $i,k$ satisfying the conditions
in Theorem~\ref{thm:tymoczko} a ``T-inversion.''
Using the above bijection, we can think of T-inversions
as certain pairs $i, k$ in an ordered path cover $(\sq,\lambda)$.
The statistic $\des$ can also be thought of as counting certain pairs
$i, k$ of $(\sq,\lambda)$, namely those satisfying
\begin{enumerate}
\item either $i\to k$ and $k\to i$ are both edges of~$\overline{D(\bm)}$
or neither is,
\item $i>k$, and
\item $k$ appears later in the sequencing~$\sq$ than $i$ does.
\end{enumerate}
Call such a pair an ``SW-inversion.''
We claim that for any ordered path cover,
the number of T-inversions equals the number of SW-inversions.
This will prove the theorem.

First let us note that the condition $k<i$ implies
that $k\le m_i$ (since $\bm$ is a Hessenberg function)
and therefore, by the argument we gave above,
$k\to i$ is an edge of $\overline{D(\bm)}$.
That is, if $k<i$ then it is not possible
for neither $i\to k$ nor $k\to i$ to be an edge of $\overline{D(\bm)}$,
so in fact both must be, and in particular we must have $i\to k$,
or in other words $i\le m_k$.
Therefore an SW-inversion can be redefined as a pair $i,k$ such that
\begin{enumerate}
\item $i$ appears earlier in the sequencing~$\sq$ than $k$ does, and
\item $k < i \le m_k$.
\end{enumerate}
It is now immediate that if
$i$ and~$k$ are in different paths then
$i,k$ is a T-inversion if and only if $i,k$ is an SW-inversion,
because by construction, $i$ appearing in an earlier path than~$k$
is equivalent to appearing in a lower row than~$k$ in the tableau.

If $i$ and $k$ are in the same path then the situation
is more complicated because T-inversions and SW-inversions
do not necessarily coincide.
However, we now give a bijection from the set of SW-inversions
to the set of T-inversions, thereby showing that they are equinumerous.

Given an SW-inversion $i,k$, let
$k_1, k_2, \ldots, k_r$ denote the remaining elements, in order,
that succeed $k$ in the path.
For convenience, set $k_0 := k$ and $k_{r+1} := \infty$.
Now let $j$ be the smallest number such that $i\le m_{k_{j+1}}$.
Then we claim that $i, k_j$ is a T-inversion, and that this is a bijection.

First let us verify that $i, k_j$ is a T-inversion.
Condition 1(d) is satisfied almost by definition
because what the construction is doing is scanning to the right
\emph{until} condition 1(d) is satisfied,
and it will always succeed, since we just take $j=r$ in the worst case.
So we just need to verify that $i>k_j$.  If $j=0$ then we are done,
because $(i,k_0) = (i,k)$ is an SW-inversion by assumption,
and in particular $i>k$.  Otherwise, by minimality of~$j$,
we know that $i>m_{k_j} \ge k_j$.

Thus the construction scans rightwards from~$k$
\emph{until the first T-inversion $i, k_j$ is reached.}

To see that this map is injective, observe that by minimality of~$j$,
we have $i>m_{k_{j'}}$ for every $0\le j' \le j$, so
$(i,k_{j'})$ is \emph{not} an SW-inversion.
Thus, as we scan rightwards from~$k$ in search of the first T-inversion
$i, k_j$, we do not encounter any other SW-inversions en route.
If more than one SW-inversion were mapped to the same T-inversion,
then the leftmost one would have to cross over the other ones en route.

To see that the map is surjective, we can define an inverse map,
that scans \emph{leftwards} from a T-inversion until it finds a pair
that satisfies $i\le m_k$.  Such a scan always succeeds because
in the worst case it ends up at the successor $i'$ of~$i$, and
$i\le m_{i'}$ because they are consecutive elements of a path.
Then by minimality, if we arrive at a pair $i,k$ with $k'$
being the successor of~$k$, we must have $k>m_{k'}\ge k$,
so what we have arrived at is indeed an SW-inversion.
\end{proof}

Let us remark that our proof shows that at least in the
case of regular Hessenberg varieties,
the two cases of Theorem~\ref{thm:tymoczko} can be unified,
namely that the dimension is just the number of pairs $i,k$
such $i$ appears to the left of~$k$ or in a lower row than~$k$,
and $k<i\le m_k$.

\begin{corollary}\label{pbetti}
  Let $\sH(\mathbf{m},s)$ be a regular Hessenberg variety with $s$ of
  type $\lambda$ as in Theorem~\ref{thm:betti}.   Set 
$$
q=q_{\sH(\mathbf{m},s)}:=\sum_{i\in\mathbb{Z}} \beta_{i} t^{i-|\mathbf{m}|}.
$$
Then $q(t)=q(t^{-1})$.  
\end{corollary}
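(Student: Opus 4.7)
The plan is to reduce the statement to the palindromicity of $\omega X_{G(\mathbf{m})}(\mathbf{x},t)$, which Shareshian and Wachs have already proved and which is mentioned in the introduction. First, I unpack what $q(t)=q(t^{-1})$ means in terms of Betti numbers. By Theorem~\ref{thm:betti}, $\beta_i=0$ for $i$ odd and $\beta_{2d}=c_{d,\lambda}(\mathbf{m})$, so
$$
q(t)=\sum_{d\ge 0} c_{d,\lambda}(\mathbf{m})\,t^{2d-|\mathbf{m}|},
$$
and the desired identity $q(t)=q(t^{-1})$ is equivalent to the numerical symmetry
$$
c_{d,\lambda}(\mathbf{m})=c_{|\mathbf{m}|-d,\lambda}(\mathbf{m}) \qquad\text{for all } d.
$$

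Second, I invoke the Shareshian--Wachs palindromicity result: $\omega X_{G(\mathbf{m})}(\mathbf{x},t)$ is a polynomial in $t$ of degree $|\mathbf{m}|$ (the number of edges of $G(\mathbf{m})$, which coincides with $|\mathbf{m}|$ via~\eqref{eq:mdim}) that satisfies the palindromicity identity
$$
t^{|\mathbf{m}|}\,\omega X_{G(\mathbf{m})}(\mathbf{x},t^{-1})=\omega X_{G(\mathbf{m})}(\mathbf{x},t).
$$
Since $\omega$ acts only on the $\mathbf{x}$-variables, it commutes with extraction of coefficients of $t$ and of $m_\lambda$; extracting the coefficient of $t^d m_\lambda$ from both sides and comparing to the definition~\eqref{eq:monom} yields the required symmetry $c_{d,\lambda}(\mathbf{m})=c_{|\mathbf{m}|-d,\lambda}(\mathbf{m})$, and we are done.

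I do not anticipate a real obstacle: Theorem~\ref{thm:betti} has done the genuinely nontrivial combinatorial and geometric work of relating Betti numbers to $c_{d,\lambda}(\mathbf{m})$, and the remaining step is a one-line transfer of Shareshian--Wachs palindromicity through the monomial expansion. The only minor bookkeeping is to verify that the centering parameter $|\mathbf{m}|$ in the corollary matches the top $t$-degree of $X_{G(\mathbf{m})}(\mathbf{x},t)$, which is immediate since the exponent $\asc\kappa$ in~\eqref{eqn:XGt} is bounded by the number of edges of $G(\mathbf{m})$, namely $|\mathbf{m}|$.
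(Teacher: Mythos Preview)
Your proposal is correct and follows essentially the same route as the paper's own proof: both use Theorem~\ref{thm:betti} to rewrite $q$ in terms of the coefficients $c_{d,\lambda}(\mathbf{m})$, then invoke the Shareshian--Wachs palindromicity result (Corollary~4.6 of~\cite{shareshian-wachs}), together with the observation that $|\mathbf{m}|$ is the number of edges of $G(\mathbf{m})$, to deduce $c_{d,\lambda}(\mathbf{m})=c_{|\mathbf{m}|-d,\lambda}(\mathbf{m})$. The only cosmetic difference is that the paper states palindromicity for $X_{G(\mathbf{m})}$ rather than $\omega X_{G(\mathbf{m})}$, but since $\omega$ acts only on the $\mathbf{x}$-variables this is the same thing, as you note.
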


\begin{proof}
First note that $|\bm|$ (as defined in Equation~(\ref{eq:mdim}))
is the number $|E|$ of edges in the incomparability
graph $G=G(\bm)$ of $P(\bm)$.  This follows directly from the 
description of $G(\bm)$ given in \S\ref{gn}.  
By \cite[Corollary 4.6]{shareshian-wachs},
$X_G(\mathbf{x},t)$ is palindromic.  More precisely, we have
$X_G(\mathbf{x},t)=t^{|\bm|} X_G(x,t^{-1})$.   Therefore, for each partition
$\lambda$, we have 
$$
\sum_d c_{d,\lambda}(\bm) t^{d} = t^{|\bm|} \sum_{d} c_{d,\lambda} t^{-d}.
$$
So 
\begin{align*}
  q(t)&=\sum_i \beta_i t^{i-|\bm|}= \sum_{d} c_{d,\lambda}(\bm) t^{2d-|\bm|}\\
&=t^{-|\bm|}\sum_{d} c_{d,\lambda}(\bm)  t^{2d}=
t^{-|\bm|}t^{2|\bm|}\sum_{d} c_{d,\lambda}(\bm) t^{-2d}\\
&=t^{|\bm|}\sum_{d} c_{d,\lambda}(\bm) t^{-2d}
=\sum_d c_{d,\lambda}(\bm) t^{|\bm|-2d}\\
&=q(t^{-1}).
\end{align*}
\end{proof}

\begin{corollary}\label{dimcor}
  Suppose $s$ is a regular matrix.  Then, for all $i\in\mathbb{Z}$,
  we have $\beta_i=\beta_{2|\mathbf{m}|-i}$.  Consequently, 
  $\dim\sH(\mathbf{m},s)=|\mathbf{m}|$.
\end{corollary}
\begin{proof}
  The first assertion follows (after a few algebraic manipulations) from
  Corollary~\ref{pbetti}.  It is well known that, for a complex, projective
  variety $X$, we have $\dim X=\max\{i:\HH^{2i} (X,\mathbb{C}) \neq 0\}$.  So, 
  the second assertion is a direct consequence of the
  first.
\end{proof}

\section{Local monodromy and local fundamental groups}\label{lmlfg}
\subsection{Local systems} In this subsection, we review some
terminology concerning local systems.  This material is standard
(going back in some ways to Riemann~\cite{riemann-abel}), but we
realized that including it might help to make our paper more broadly
accessible.  Moreover, since we are making considerable use of local
systems, it seems appropriate to be as precise as possible about what
they are.  To have a specific (modern) reference, we follow the
dictionary on page 3 of Deligne's book on differential
equations~\cite{deligne-eqdiff}.

By a locally constant sheaf on a topological space $X$, we simply mean
a sheaf of sets $\mathcal{F}$ which is locally isomorphic to a
constant sheaf of sets.  In other words, each $x\in X$ has an open
neighborhood $U$ such that the restriction of $\mathcal{F}$ to $U$ is
constant.  We can consider the class of locally constant sheaves as a
full subcategory of the class of all sheaves.  On the other hand, it
is well known and easy to see that the category of locally constant
sheaves on $X$ is equivalent to the category of covering spaces of $X$
(c.f.~\cite[Definition and Proposition 3.41]{wedhorn}).

By a local system on a topological space $X$, we mean a sheaf of finite
dimensional 
$\mathbb{C}$-vector spaces $\mathcal{F}$ on $X$ which is locally 
isomorphic to a constant sheaf of $\mathbb{C}$-vector spaces. 
Note that this definition differs slightly from Deligne's
in~\cite{deligne-eqdiff} in that Deligne requires the dimension of the stalks to be constant.
However, this is guaranteed by our definition if $X$ is connected, which
is the most important case, and
the added flexibility is useful.

For any ring $R$, we could equally well define $R$-local systems
by replacing 
$\mathbb{C}$ with $R$ (and finite dimensional vector spaces by finitely 
generated $R$-modules).  
But, to avoid cluttering up the
notation, we refer the reader to~\cite{ElZeinLS} for this notion.
(For most of the paper we only use $\mathbb{C}$-local systems, however,
 we do use $\mathbb{Z}$-local systems in \S\ref{sghs} and $A$-local systems
for $A$ a polynomial ring in \S\ref{mtd}.)

We view the class of local systems as a
full-subcategory of the category of sheaves of $\mathbb{C}$-vector
spaces on $X$.  Clearly, there is a forgetful functor from the
category of local systems on $X$ to the category of locally constant
sheaves on $X$ (by forgetting the $\mathbb{C}$-vector space
structures).

Suppose now that $X$ is non-empty.  Pick a point $x_0\in X$, which we call a
``basepoint.''  Then the fundamental group $\pi_1(X,x_0)$ acts on the
fiber $\mathcal{F}_{x_0}$ of any locally constant sheaf $\mathcal{F}$
giving us a homomorphism
$\rho:\pi_1(X,x_0)\to \Aut \mathcal{F}_{x_0}$.  If $\mathcal{F}$ is a
local system then $\rho$ respects the $\mathbb{C}$-vector space
structure giving us a group homomorphism
\begin{equation}
  \label{e.monodromy}
  \rho:\pi_1(X,x_0)\to \GL(\mathcal{F}_{x_0})
\end{equation}
which is usually called the \emph{monodromy representation}.
The fundamental fact about local systems and locally constant sheaves is then the following standard result (which
can be found on pages 3 and 4 of~\cite{deligne-eqdiff}).

\begin{theorem}\label{t.equiv}
  Suppose that $X$ is a locally path connected, locally simply
  connected, connected topological space equipped with a point $x_0$.
  Then the functor $\mathcal{F}\leadsto \mathcal{F}_{x_0}$ induces an
  equivalence from the category of locally constant sheaves
  (resp.\ local systems) on $X$ to the category of $\pi_1(X,x_0)$-sets
  (resp.\ finite dimensional complex representations of
  $\pi_1(X,x_0)$).
\end{theorem}

\begin{proof}[Sketch]
  Since Deligne does not actually prove Theorem~\ref{t.equiv} in~\cite{deligne-eqdiff}, we give a sketch.
  
  The main point is that, under the assumption that $X$ is locally
  path connected, locally simply connected and connected, there exists
  a universal cover $\tilde X$ of $X$.  For a proof, see the
  discussion starting on page 64 of Hatcher's book~\cite{Hatcher}, where $\tilde{X}$
  is constructed as a space of homotopy classes of paths starting from the point $x_0$.
  Moreover $\pi_1(X,x_0)$ acts freely on $\tilde X$ with quotient $X$.  Given a
  $\pi_1(X,x_0)$-set $E$ (resp.\ a finite dimensional $\pi_1(X,x_0)$-representation $E$),
  we  consider the quotient $\mathcal{F}_E:=(\tilde{X}\times E)/\pi_1(X,x_0)$ where the fundamental
  group acts on the product by $\gamma(\tilde{x},e)=(\gamma\tilde{x},\gamma^{-1}e)$.
  That is, we form the Borel construction, where here $E$ is given the discrete topology.
  
  The space $\mathcal{F}_E$ is naturally a covering space of $X$ via
  the map $\mathcal{F}_E\to X$ induced by projection on the first factor in the product $\tilde{X}\times E$.   If $E$
  is a $\mathbb{C}$-vector space, the sheaf corresponding to $\mathcal{F}_E$ has the natural structure of a sheaf of $\mathbb{C}$-vector spaces. 
  We leave the rest of the verification to the reader.
\end{proof}

\begin{corollary}\label{c.equiv}
  Suppose $\mathcal{F}$ is a local system on a topological space $X$ as in
  Theorem~\ref{t.equiv}.  Then there is a natural isomorphism
  $\HH^0(X,\mathcal{F})=\mathcal{F}_{x_0}^{\pi_1(X,x_0)}$.  
\end{corollary}
\begin{proof}
  Write $\mathbb{C}_X$ for the constant local system on $X$, which corresponds to the
  trivial representation of $\pi_1(X,x_0)$.  Then we have $\mathcal{F}^{\pi_1(X,x_0)}=\Hom(\mathbb{C}_X,\mathcal{F})$ by Theorem~\ref{t.equiv}.  But it is easily seen that the
  natural map $\Hom(\mathbb{C}_X,\mathcal{F})\to \mathcal{F}(X)$ is an isomorphism.
\end{proof}

\begin{remark}
  By Proposition A.4 on page 531 of Hatcher's book~\cite{Hatcher},
  CW complexes are locally contractible.  It follows that, if $X$ is a CW
  complex and $Y$ is a closed subcomplex, then $X\setminus Y$ is locally contractible.  In particular, if $X\setminus Y$ is connected then it satisfies the hypotheses of Theorem~\ref{t.equiv}.  
\end{remark}

\subsection{Local homotopy type}
In this subsection we review the definition and some of the main
properties of local homotopy type.  This material is probably
well known to some readers, but we feel that it will be convenient to
review it.  Our treatment follows the book by
Looijenga~\cite{Looijenga}, a paper by Kumar~\cite{Kumar93} and
another paper by Prill~\cite{Prill}.

Suppose $X$ is a topological space and $x\in X$.  A \emph{fundamental
  system of neighborhoods} $\mathscr{U}$ of $x$ is a system of open
neighborhoods such that any open neighborhood $V$ of $x$ contains a
$U\in\mathscr{U}$.  

The following Lemma is~\cite[Lemma 1.1]{Kumar93}.

\begin{lemma}\label{lm1}
  Suppose $X$ is a CW complex, $x\in X$ and $Y$ is a closed subcomplex
  of $X$ containing $x$.  Then there exists a fundamental system
  $\{U\}_{U\in\mathscr{U}}$ of open neighborhoods of $x$ in $X$ such
  that the following condition is satisfied:
\begin{equation}
\parbox{25em}{For any $U,V\in\mathscr{U}$ with $V\subset U$, the inclusion $V\setminus Y\hookrightarrow U\setminus Y$ is a homotopy  equivalence.}
\end{equation}
\end{lemma}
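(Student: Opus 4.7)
The plan is to construct the fundamental system $\mathscr{U}$ using the local conic structure of a CW pair at a $0$-cell. First I would reduce to a combinatorial model: after a suitable simplicial subdivision (or by working with a CW neighborhood that admits one), we may assume that $x$ is a vertex of $X$ and that $Y$ remains a subcomplex. The open star of $x$ is then homeomorphic to the open cone $c(L) := L \times [0, 1) / (L \times \{0\})$ on the link $L$ of $x$, with $x$ as the apex, and $Y$ restricts to the subcone $c(L \cap Y)$.

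Working in this cone model, I would define $U_n := \{[\ell, t] \in c(L) : t < 1/n\}$ for $n \geq 1$. These form a fundamental system of open neighborhoods of $x$. For $V = U_m \subset U = U_n$ with $m > n$, the radial scaling map $\phi_{m,n}([\ell, t]) := [\ell, (n/m) t]$ restricts to a homeomorphism $U \to V$ that fixes the apex and preserves $Y$, since a subcone is invariant under radial scaling. Consequently $\phi_{m,n}$ restricts to a homeomorphism $U \setminus Y \to V \setminus Y$, which will serve as a homotopy inverse to the inclusion $\iota : V \setminus Y \hookrightarrow U \setminus Y$.

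To see that $\iota \circ \phi_{m,n}$ and $\phi_{m,n} \circ \iota$ are homotopic to the respective identities, I would use the straight-line homotopy $H_s([\ell, t]) := [\ell, (1 - s + sn/m) t]$. The scaling factor stays in $[n/m, 1]$, so $H_s$ maps $U$ into $U$ throughout the homotopy. It also avoids $Y$: the radial fiber through $[\ell, t]$ either lies entirely in $c(L \cap Y)$ (if $\ell \in L \cap Y$) or touches it only at the apex, so if $[\ell, t] \notin Y$ then $H_s([\ell, t]) \notin Y$ for every $s \in [0, 1]$. This shows that $\iota$ is a homotopy equivalence.

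The main obstacle I anticipate is the initial reduction to the conic model in full CW generality. For simplicial complexes, open stars are open cones on links and subcomplexes restrict to subcones, which makes the entire argument go through transparently. For an arbitrary CW complex with a closed subcomplex, one must either appeal to a local triangulation result (available for finite-dimensional CW complexes) or argue cell-by-cell using the attaching maps, which is more delicate. Once the local conic model is established, the radial contraction argument above produces the desired fundamental system of neighborhoods with the claimed homotopy-equivalence property.
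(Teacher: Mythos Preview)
The paper does not actually prove this lemma; it simply attributes it to Kumar \cite[Lemma~1.1]{Kumar93}. So there is no argument in the paper to compare against directly.

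Your cone-and-radial-scaling approach is the standard route, and the core of it is correct once the local conic model is in hand. The homotopy $H_s$ stays inside $U\setminus Y$ for exactly the reason you give (radial fibers through points outside the subcone meet the subcone only at the apex), and the scaling map is a genuine homotopy inverse to the inclusion. For simplicial complexes---and hence for the locally triangulable analytic spaces that the paper actually uses the lemma for (note the appeal to \cite{Lojasiewicz64} just after the lemma)---your reduction and the subsequent argument go through cleanly.

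You are also right to flag the initial reduction as the weak point in full CW generality. An arbitrary CW complex need not be locally triangulable, so ``pass to a simplicial subdivision'' is not a free move; one must instead build the conic neighborhood inductively over skeleta using the attaching maps, or restrict to a class of CW complexes (e.g., locally finite, finite-dimensional) where local triangulability is available. This is a genuine technical gap in your outline for the lemma as stated, though it does not affect the applications in this paper.
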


A system of neighborhoods $\mathscr{U}$ as in Lemma~\ref{lm1} is
called a \emph{good fundamental system of neighborhoods relative to
  $Y$}.

\begin{lemma}\label{lm1.5}
  Suppose $\mathcal{C}$ is a category and 
$$
A\stackrel{f}{\to} B\stackrel{g}{\to} C\stackrel{h}{\to} D
$$
is a sequence of morphisms.  Assume that $g\circ f$ and $h\circ g$ 
are isomorphisms.  Then $f, g$ and $h$ are all isomorphisms.
\end{lemma}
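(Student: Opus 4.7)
The plan is to show first that $g$ is an isomorphism, and then deduce that $f$ and $h$ are isomorphisms by composing with $g^{-1}$. The key categorical fact is that a morphism which admits both a left inverse and a right inverse is automatically an isomorphism (and the two inverses coincide).

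First I would exhibit a right inverse for $g$. Since $g \circ f$ is an isomorphism, the morphism $f \circ (g \circ f)^{-1} : C \to B$ satisfies
\[
g \circ \bigl(f \circ (g \circ f)^{-1}\bigr) = (g \circ f) \circ (g \circ f)^{-1} = \id_C,
\]
so $g$ is a split epimorphism. Next I would exhibit a left inverse for $g$. Since $h \circ g$ is an isomorphism, the morphism $(h \circ g)^{-1} \circ h : C \to B$ satisfies
\[
\bigl((h \circ g)^{-1} \circ h\bigr) \circ g = (h \circ g)^{-1} \circ (h \circ g) = \id_B,
\]
so $g$ is a split monomorphism. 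Hence $g$ is an isomorphism, with $g^{-1} = (h \circ g)^{-1} \circ h = f \circ (g \circ f)^{-1}$.

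Finally, given that $g$ is an isomorphism, I would write $f = g^{-1} \circ (g \circ f)$ and $h = (h \circ g) \circ g^{-1}$, each a composition of isomorphisms and therefore itself an isomorphism. There is no real obstacle here; the only subtlety is remembering that ``split epi and split mono'' is genuinely stronger than needed (any mono plus split epi would suffice in a general category, for instance), but the split-inverse approach gives the cleanest statement and avoids invoking any ambient structure on $\mathcal{C}$.
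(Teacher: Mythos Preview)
Your argument is correct: exhibiting $f\circ(g\circ f)^{-1}$ as a right inverse and $(h\circ g)^{-1}\circ h$ as a left inverse for $g$ is exactly the standard way to do this, and your deductions for $f$ and $h$ follow immediately. The paper itself gives no proof beyond ``Easy exercise,'' so there is nothing further to compare.
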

\begin{proof}
Easy exercise.
\end{proof}

We have adapted the proof of the following Proposition from
Looijenga's~\cite[p.~114]{Looijenga}, and Prill's Proposition
2~\cite{Prill}.

\begin{proposition}\label{pgood}
  Suppose $\{\mathscr{U}_{\alpha}\}_{\alpha\in I}$ is a non-empty collection
of good fundamental systems of neighborhoods as in Lemma~\ref{lm1}.  
Then so is $\mathscr{U}:=\cup_{\alpha\in I} \mathscr{U}_{\alpha}$. 
Consequently, the union of all good fundamental systems of neighborhoods
is itself a good fundamental system of neighborhoods. 
\end{proposition}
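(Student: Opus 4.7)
The plan is to verify separately the two defining conditions of a good fundamental system for $\mathscr{U}$: the fundamental-system property is essentially free, and the goodness property requires a short two-step interleaving argument combined with Lemma~\ref{lm1.5}. The fundamental-system property is immediate from $I$ being non-empty: fix any $\alpha_0 \in I$; then $\mathscr{U}_{\alpha_0} \subseteq \mathscr{U}$, and since $\mathscr{U}_{\alpha_0}$ is already a fundamental system of neighborhoods of $x$, so is $\mathscr{U}$.

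For the goodness condition, I would argue as follows. Suppose $U_1, U_2 \in \mathscr{U}$ with $U_2 \subseteq U_1$, and write $U_1 \in \mathscr{U}_\alpha$ and $U_2 \in \mathscr{U}_\beta$ (the case $\alpha = \beta$ is a tautology). The key move is to choose a nested pair of smaller neighborhoods drawn alternately from the two systems so that every \emph{composite} inclusion of interest lies entirely inside one single $\mathscr{U}_\gamma$. Since $\mathscr{U}_\alpha$ is a fundamental system, pick $W \in \mathscr{U}_\alpha$ with $W \subseteq U_2$; then, since $\mathscr{U}_\beta$ is a fundamental system, pick $W' \in \mathscr{U}_\beta$ with $W' \subseteq W$. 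Removing $Y$ from the resulting chain $W' \subseteq W \subseteq U_2 \subseteq U_1$ gives a sequence
\[
W' \setminus Y \xrightarrow{f} W \setminus Y \xrightarrow{g} U_2 \setminus Y \xrightarrow{h} U_1 \setminus Y
\]
in the homotopy category of topological spaces, in which $g \circ f$ is the inclusion $W' \setminus Y \hookrightarrow U_2 \setminus Y$ between two members of $\mathscr{U}_\beta$, and $h \circ g$ is the inclusion $W \setminus Y \hookrightarrow U_1 \setminus Y$ between two members of $\mathscr{U}_\alpha$. By the goodness of $\mathscr{U}_\beta$ and $\mathscr{U}_\alpha$ respectively, both composites are homotopy equivalences, so Lemma~\ref{lm1.5} forces each of $f$, $g$, $h$ to be a homotopy equivalence; in particular $h \colon U_2 \setminus Y \to U_1 \setminus Y$ is a homotopy equivalence, which is the goodness condition for $\mathscr{U}$.

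The final sentence of the proposition then reduces to observing that the collection of all good fundamental systems of neighborhoods of $x$ relative to $Y$ is non-empty by Lemma~\ref{lm1}, so the first half of the proposition applies to their union. I do not expect any significant obstacle in carrying this out; the one conceptual point, and essentially the reason Lemma~\ref{lm1.5} was stated in the preceding paragraph, is that a single refinement $W \subseteq U_2$ inside $\mathscr{U}_\alpha$ is not sufficient—one must interleave in both directions in order to produce \emph{two} composable pairs whose total composites lie in a single $\mathscr{U}_\gamma$.
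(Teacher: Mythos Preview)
Your proof is correct and is essentially identical to the paper's: the paper also interleaves two smaller neighborhoods (your $W$ and $W'$, their $U'$ and $V'$) drawn alternately from $\mathscr{U}_\alpha$ and $\mathscr{U}_\beta$, and then applies Lemma~\ref{lm1.5} to the resulting four-term chain to conclude that the inclusion $U_2\setminus Y \hookrightarrow U_1\setminus Y$ is a homotopy equivalence. You additionally spell out the (trivial) fundamental-system property and the deduction of the final sentence from Lemma~\ref{lm1}, which the paper leaves implicit.
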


\begin{proof}
Take $U\in\mathscr{U}_{\alpha}$ and $V\in\mathscr{U}_{\beta}$ with $V\subset U$.
We can find $U'\in\mathscr{U}_{\alpha}$ such that $U'\subset V$, 
and $V'\in
    \mathscr{U}_{\beta}$ such that $V'\subset U'$.  Then apply Lemma~\ref{lm1.5}
to the sequence of inclusions
$$
V'\setminus Y\to
U'\setminus Y 
\to V\setminus Y
\to U\setminus Y.
$$
This shows that the inclusion $V\setminus Y\to U\setminus Y$ is a homotopy 
equivalence.
\end{proof}

\begin{definition}\label{lhtdef}
  Suppose $X$ is a CW complex and $Y$ is a closed subcomplex
  containing a point $x$.  We say an open neighborhood $U$ of $x$ is
  \emph{good relative to $Y$} if $U$ is an element of a good
  fundamental system of neighborhoods.   The \emph{local homotopy type
    of $X\setminus Y$ at $x$} is the homotopy type of $U\setminus Y$
  where $U$ is any good neighborhood.
\end{definition}

If $A$ and $B$ are objects in any category $\mathcal{C}$, we say that
$A$ is a \emph{retract} of $B$ if there are morphism $i:A\to B$ and
$r:B\to A$ such that $r\circ i=\id_A$.  In other words, we follow
Mac Lane's terminology in~\cite[p.~19]{MacLane}.

Suppose $U$ is a good neighborhood of $x$ and $W$ is an arbitrary (not
necessarily good) open neighborhood of $x$ contained in $U$.  Then we
can find a good neighborhood $V$ such that $V\subset W$.
Since $V$ is good, Proposition~\ref{pgood} shows that the composition
$$
V\setminus Y\to W\setminus Y\to U\setminus Y
$$
is a homotopy equivalence.  In other words, the local homotopy type of
$X\setminus Y$ at $x$ is a retract of the homotopy type of $W\setminus
Y$.

Now suppose $X$ is an analytic space, $Y$ is a Zariski closed subspace
and $x\in Y$.  We can find an analytic open neighborhood $W$ of $x$ in $X$ such
that $W$ has the topological structure of a CW complex with $W\cap Y$
a subcomplex.  (See, for example, ~\cite{Lojasiewicz64}.)
Consequently, there exist good neighborhoods of $x$ in $X$ relative to
$Y$.

The following fact is certainly well known (see, e.g.,~\cite[Corollary
1]{Prill}), but we give a proof because it is short.

\begin{fact}\label{lmfact}
  Suppose $X$ is a complex manifold and $Y$ is a closed, nowhere dense,
  analytic subspace of $X$ containing a point $x$.  Then $U\setminus
  Y$ is  non-empty and connected for any good neighborhood of $x$.
\end{fact}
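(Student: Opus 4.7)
The plan is to reduce the question, via a good fundamental system, to the case of a small coordinate ball, and then appeal to the classical theorem that the complement of a proper analytic subset in a connected complex manifold is connected. Nonemptiness is the easy half: since $Y$ is nowhere dense in $X$, the set $U\setminus Y$ is a dense open subset of $U$, and in particular nonempty, so the real content lies in connectedness.

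For connectedness I would first introduce small coordinate balls. Choose a holomorphic chart centered at $x$ identifying a neighborhood of $x$ with an open subset of $\mathbb{C}^n$ containing the origin, and let $B_\epsilon$ denote the preimage of the Euclidean ball of radius $\epsilon$. The key preparatory step---and the main technical input---is to show, using the local conic structure of analytic sets (Lojasiewicz/Milnor), that for some small $\epsilon_0>0$ the family $\{B_\epsilon\}_{0<\epsilon<\epsilon_0}$ is itself a good fundamental system of neighborhoods of $x$ relative to $Y$. Concretely, for small enough $\epsilon$ the pair $(B_\epsilon, B_\epsilon\cap Y)$ is homeomorphic to the open cone on its boundary link, and this yields the required homotopy equivalences of the complements as $\epsilon$ decreases.

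Having that in hand, I would invoke Proposition~\ref{pgood} to merge this small-ball system with any good fundamental system containing the given neighborhood $U$, producing a single good system in which both $U$ and arbitrarily small $B_\epsilon$ appear. For $\epsilon$ small enough one has $B_\epsilon\subset U$, so by the defining property of a good system the inclusion $B_\epsilon\setminus Y\hookrightarrow U\setminus Y$ is a homotopy equivalence; in particular the two spaces have the same number of connected components. Finally, $B_\epsilon$ is a connected complex manifold and $B_\epsilon\cap Y$ is a nowhere dense analytic subset of it, so the classical fact that the complement of a proper analytic subset in a connected complex manifold is connected finishes the proof. The main obstacle I anticipate is the conic-structure step itself: it is standard but substantive, depending on local triviality results for analytic sets rather than on anything formal from the machinery developed earlier in Section~\ref{lmlfg}.
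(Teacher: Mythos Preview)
Your argument is correct, but it takes a harder route than necessary. The paper's proof bypasses the conic-structure step entirely by exploiting the retract observation made just after Definition~\ref{lhtdef}: if $U$ is good and $V\subset U$ is \emph{any} open neighborhood of $x$ (not necessarily good), then one can slip a good $V'\subset V$ underneath, so that $V'\setminus Y\to V\setminus Y\to U\setminus Y$ composes to a homotopy equivalence; hence the homotopy type of $U\setminus Y$ is a retract of that of $V\setminus Y$. Now simply choose $V$ to be any connected neighborhood of $x$ (for instance a small coordinate ball, with no need to verify it is good). The classical fact that the complement of a nowhere dense analytic subset in a connected complex manifold is connected gives that $V\setminus Y$ is connected, and retracts of connected spaces are connected.

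So the step you flagged as the main obstacle---showing that small balls form a good fundamental system via the local conic structure---is not needed for this Fact. Your approach effectively re-proves the stronger statement that small balls are themselves good (which the paper records separately, citing Dimca, right after this Fact), whereas the paper's proof gets away with the weaker retract statement already in hand. Both arguments ultimately rest on the same classical connectedness result for complements of analytic sets; the difference is only in how much topological machinery you invoke to transfer connectedness from the small ball to $U$.
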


\begin{proof}
  Let $U$ be a good neighborhood of $x$ and let $V$ be any connected
  neighborhood of $x$ contained in $U$.  Then $V\setminus Y$ is
  connected (for example, by the Criterion for Connectedness on page
  133 of~\cite{GR}).  It is also non-empty.  But the homotopy type of
  $U\setminus Y$ is a retract of the homotopy type of $V\setminus Y$.  So $U\setminus Y$ is connected
  and non-empty as well.
\end{proof}

If $X$ is smooth at $x$,  we can find a contractible good
neighborhood $U$ of~$x$.  (See~\cite{Prill}.)  In fact, we can 
take a sufficiently small ball as in the following theorem, which 
follows from Theorem 5.1 of Dimca's~\cite{dimca}.

\begin{theorem}
 \label{t-dimca}
  Suppose $X$
  is a complex manifold of dimension $n$
  at $x$
  and $Y$
  is a closed analytic subspace of $X$
  containing $x$.
  For each positive real number $r$,
  write $B_r$
  for the ball of radius $r$
  centered at $0$
  in $\mathbb{C}^n$.
  Then there exists a good neighborhood $U$
  of $x$
  relative to $Y$
  and biholomorphism $\varphi:U\to B_1$
  such that the following holds: For each $r\in (0,1)$,
  $\varphi^{-1}B_{r}$ is a good neighborhood of $x$.
\end{theorem}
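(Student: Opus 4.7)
The statement is asserted to follow from Dimca~\cite[Theorem 5.1]{dimca}, so my plan is to extract the content of that theorem and check that it gives exactly what is claimed here.

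First, since $X$ is smooth of dimension $n$ at $x$, I would pick local analytic coordinates: that is, an open neighborhood $W$ of $x$ in $X$ together with a biholomorphism $\psi\colon W\to B_R$ for some $R>0$, sending $x$ to the origin. Via $\psi$ I may transport $Y\cap W$ to a closed analytic subspace $Y'\subset B_R$ containing $0$, and it suffices to prove the theorem for $(B_R,Y',0)$ in place of $(X,Y,x)$; the biholomorphism $\varphi$ will then be a rescaling of $\psi$.

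Next I would invoke Milnor's conic neighborhood theorem in the analytic form given by Dimca's Theorem 5.1: there exists $\epsilon_0\in(0,R)$ such that for every $\epsilon\in(0,\epsilon_0]$, the pair $(B_\epsilon,B_\epsilon\cap Y')$ is homeomorphic to the open cone on $(\partial B_\epsilon,\partial B_\epsilon\cap Y')$, with cone point $0$, and this homeomorphism is the identity on $\partial B_\epsilon$. From the conic structure one deduces (for example, by sliding along the cone rays) that for any $0<r_1<r_2\le \epsilon_0$ the inclusion
\[
B_{r_1}\setminus Y'\;\hookrightarrow\; B_{r_2}\setminus Y'
\]
is a deformation retract, and in particular a homotopy equivalence. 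Hence the family $\mathscr{U}:=\{B_r:0<r<\epsilon_0\}$ is a fundamental system of neighborhoods of $0$ with the property required by Lemma~\ref{lm1}, i.e., it is a good fundamental system of neighborhoods relative to $Y'$.

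To produce the biholomorphism $\varphi\colon U\to B_1$ with the stated scaling property, I would simply set $U:=\psi^{-1}(B_{\epsilon_0})$ and $\varphi(z):=\psi(z)/\epsilon_0$; then $\varphi^{-1}(B_r)=\psi^{-1}(B_{\epsilon_0 r})$ belongs to $\mathscr{U}$ for each $r\in(0,1)$ and is therefore itself a good neighborhood of $x$ relative to $Y$.

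The only real content is the existence of Milnor's conic neighborhood, which is the cited result of Dimca; the remainder is a rescaling and an application of Lemma~\ref{lm1} together with Proposition~\ref{pgood} to check that the concentric balls form a good fundamental system. The main obstacle, if any, is verifying that Dimca's theorem indeed yields the conic structure relative to the \emph{analytic} subspace $Y'$ (rather than merely a real analytic one) and that this is strong enough to give the homotopy equivalence of complements; this is standard but is where the geometric work is concentrated.
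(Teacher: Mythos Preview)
Your proposal is correct and matches the paper's approach exactly: the paper gives no proof of this theorem and simply asserts that it follows from Dimca's Theorem~5.1, which is precisely the conic-structure result you invoke. Your filling-in of the details (local coordinates, conic structure giving homotopy equivalences between concentric ball complements, rescaling to $B_1$) is the standard and intended argument.
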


\subsection{Local fundamental group}

Fact~\ref{lmfact} leads to the following definition.

\begin{definition}\label{dlfg}
  Suppose $X$ is a complex manifold, and $Y$ is a closed,
  nowhere dense, analytic subspace of $X$ containing a point $x$.
  Then the \emph{local fundamental group} of $X\setminus Y$ at $x$ is the
  isomorphism class of the group $\pi_1(U\setminus Y,p)$ where $U$ is any good
  neighborhood of $x$ (with respect to $Y$) and $p\in U\setminus Y$.
\end{definition}

Since the smoothness of $X$
in the Definition~\ref{dlfg} (together with Fact~\ref{lmfact}) implies
that $U\setminus Y$
is connected, the isomorphism class of $\pi_1(U\setminus Y,p)$
is indeed well-defined.  But, since we have not given a way to fix a
base point, it is only defined up to a non-canonical isomorphism.

On the other hand, suppose $X$ as in Definition~\ref{dlfg} is
connected.  Pick any point $q\in X\setminus Y$. Given a good
neighborhood $U$ of $x$ relative to $Y$, we can find a point $p\in
U\setminus Y$ and a path $\gamma$ from $p$ to $q$.  From this, we get
a group homomorphism
$$
\varphi_{\gamma}:\pi_1(U\setminus Y,p)\to \pi_1(X\setminus Y,q).
$$
Changing $\gamma$ has the effect of conjugating $\varphi_{\gamma}$ by 
an element of $\pi_1(X\setminus Y,q)$.   So the conjugacy class of the 
subgroup $\varphi_{\gamma}(\pi_1(U\setminus Y,p))$ 
is independent of $\gamma$.

\begin{proposition}\label{psec}
  Suppose $f:X'\to X$ is a morphism of complex analytic spaces
  admitting a section $\epsilon:X\to X'$.  Let $Y$ be a closed,
  nowhere dense, analytic subspace of $X$ containing $x\in X$, and set
  $Y':=f^{-1}Y$.  Then the local homotopy type of $X\setminus Y$ at
  $x$ is a retract of the local homotopy type of $X'\setminus Y'$ at
  $\epsilon(x)$.  In particular, if $X$ and $X'$ are complex
  manifolds, then the local fundamental group of $X\setminus Y$ at $x$
  is a retract of the local fundamental group of $X'\setminus Y'$ at
  $\epsilon(x)$.
\end{proposition}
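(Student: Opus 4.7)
The plan is to exploit the identity $f\circ\epsilon=\id_X$ by choosing three nested good neighborhoods $V\subseteq U\subseteq X$ around $x$ and $U'\subseteq X'$ around $\epsilon(x)$ that sit inside one another under $f$ and $\epsilon$. Concretely, I would first pick any good neighborhood $U$ of $x$ relative to $Y$ in $X$. Since $f(\epsilon(x))=x\in U$ and $f$ is continuous, $f^{-1}(U)$ is an open neighborhood of $\epsilon(x)$; by Proposition~\ref{pgood} I may choose a good neighborhood $U'$ of $\epsilon(x)$ relative to $Y'$ with $U'\subseteq f^{-1}(U)$. Then $\epsilon^{-1}(U')$ is an open neighborhood of $x$, and using Proposition~\ref{pgood} once more I pick a good neighborhood $V$ of $x$ relative to $Y$ with $V\subseteq\epsilon^{-1}(U')$. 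Note $V\subseteq U$ automatically, since $\epsilon^{-1}(U')\subseteq\epsilon^{-1}f^{-1}(U)=U$.

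With these choices, $\epsilon$ restricts to a map $V\setminus Y\to U'\setminus Y'$ (because $Y'=f^{-1}(Y)$, so $\epsilon(V\setminus Y)\subseteq U'\setminus Y'$), and $f$ restricts to a map $U'\setminus Y'\to U\setminus Y$. The composition
\[
V\setminus Y\xrightarrow{\epsilon} U'\setminus Y'\xrightarrow{f} U\setminus Y
\]
is exactly the inclusion $V\setminus Y\hookrightarrow U\setminus Y$, because $f\circ\epsilon=\id_X$. Since $V$ and $U$ both belong to a good fundamental system of neighborhoods of $x$ relative to $Y$, this inclusion is a homotopy equivalence by Lemma~\ref{lm1} (or directly by the definition in Definition~\ref{lhtdef}).

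Consequently, if $h\colon U\setminus Y\to V\setminus Y$ is any homotopy inverse of the inclusion, the map $h\circ f\colon U'\setminus Y'\to V\setminus Y$ satisfies $(h\circ f)\circ\epsilon\simeq\id_{V\setminus Y}$, exhibiting $V\setminus Y$ as a homotopy retract of $U'\setminus Y'$. Since $V\setminus Y$ represents the local homotopy type of $X\setminus Y$ at $x$ and $U'\setminus Y'$ represents the local homotopy type of $X'\setminus Y'$ at $\epsilon(x)$, this proves the first claim. For the statement about fundamental groups, when $X$ and $X'$ are complex manifolds we may further choose $V$ and $U'$ connected (even contractible, by the remarks following Fact~\ref{lmfact}), pick a base point $p\in V\setminus Y$ with image $\epsilon(p)\in U'\setminus Y'$, and apply $\pi_1$: the composition $\pi_1(V\setminus Y,p)\xrightarrow{\epsilon_*}\pi_1(U'\setminus Y',\epsilon(p))\xrightarrow{(h\circ f)_*}\pi_1(V\setminus Y,p)$ is the identity, so $\pi_1(V\setminus Y,p)$ is a retract of $\pi_1(U'\setminus Y',\epsilon(p))$.

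No step is really hard; the only subtlety is purely bookkeeping, namely producing the three nested neighborhoods so that both $\epsilon$ and $f$ land in the next layer and the good-neighborhood property propagates. This is handled by combining continuity of $\epsilon$ and $f$ with Proposition~\ref{pgood}, which guarantees that the union of good fundamental systems is itself a good fundamental system, so there is no obstruction to choosing the inner neighborhoods good as well.
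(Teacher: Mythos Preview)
Your proof is correct and follows essentially the same argument as the paper: choose three nested good neighborhoods so that the composition $\epsilon$ followed by $f$ becomes the inclusion of one good neighborhood into another, which is a homotopy equivalence by definition. Your naming differs from the paper's (you use $U,U',V$ where the paper uses $U,V,U'$), and you spell out the fundamental-group conclusion in more detail, but the substance is identical.
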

\begin{proof}
  Pick a good neighborhood $U$ of $x$.  Then find a good neighborhood
  $V$ of $\epsilon(x)$ contained in $f^{-1}U$.  Finally, find a good
  neighborhood $U'$ of $x$ contained in $\epsilon^{-1}V$.  We then 
  have a composition
$$
U'\setminus Y\stackrel{\epsilon}{\to} V\setminus Y'\stackrel{f}{\to} U\setminus Y
$$
which is a homotopy equivalence.  The result follows.  
\end{proof}

\subsection{Local systems and local invariant cycles}  
Suppose $X$ is a CW complex containing a closed subcomplex $Y$ which
contains a point $x$, and $\mathcal{L}$ is a local system of complex
vector spaces on $X\setminus Y$.  For any two good neighborhoods $U_1$
and $U_2$ of $x$ and any integer $k$, the sheaf cohomology groups
$\HH^k(U_i\setminus Y,\mathcal{L}), i=1,2$ are canonically isomorphic.
To see this, take a good neighborhood $V\subset U_1\cap U_2$, and note
that the restriction maps $\HH^k(U_i\setminus Y,\mathcal{L})\to
\HH^k(V\setminus Y,\mathcal{L})$ are isomorphisms.  So we write 
$\HH^k(x,\mathcal{L})$ for the group $\HH^k(U\setminus Y,\mathcal{L})$
where $U$ is any good neighborhood of $x$.   It is isomorphic to the
group $\colim \HH^k(U\setminus Y,\mathcal{L})$ where the colimit is taken 
over all open neighborhoods of $x$. 
The group $\HH^0(x,\mathcal{L})$ is
called the space of \emph{local invariants}.

If $X$ is a complex manifold and $Y$ is a nowhere dense analytic subspace, then
$U\setminus Y$ is connected for any good neighborhood of $x$ relative
to $Y$.  Pick a basepoint $p\in U\setminus Y$.  Then the data of the
local system $\mathcal{L}$ defines an action of $\pi_1(U\setminus
Y,p)$ on the fiber $\mathcal{L}_p$ at $p$.  Moreover, via Corollary~\ref{c.equiv}, the space of
local invariants is given by the invariants of the action:
\begin{equation}
  \label{fgi}
  \HH^0(x,\mathcal{L})=\mathcal{L}_p^{\pi_1(U\setminus Y,p)}.
\end{equation}

\begin{corollary}
\label{cor-nowhere dense}
  Suppose $X$ is smooth and $B$ is any connected neighborhood of $x$
contained in a good neighborhood $U$.  Then $\HH^0(x,\mathcal{L})=
\HH^0(B\setminus Y,\mathcal{L})$.
\end{corollary}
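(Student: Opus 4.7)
The plan is to reduce both sides of the claimed equality to $\pi_1$-invariants of the stalk $\mathcal{L}_p$ at a common basepoint and then exploit a good neighborhood nested inside~$B$. First I would verify that $B\setminus Y$ is nonempty and connected: nonemptiness is immediate since $Y$ is nowhere dense, and connectedness follows from the same criterion used in the proof of Fact~\ref{lmfact}, which applies to any connected open subset of the smooth complex manifold~$X$, not merely to good neighborhoods. This means that for any basepoint $p\in B\setminus Y$ the standard correspondence between local systems and $\pi_1$-representations yields $H^0(B\setminus Y,\mathcal{L})=\mathcal{L}_p^{\pi_1(B\setminus Y,p)}$.

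Next I would use that good neighborhoods of $x$ relative to $Y$ form a fundamental system at~$x$ (Proposition~\ref{pgood} together with the existence discussion preceding Fact~\ref{lmfact}) to pick a good neighborhood $V$ with $V\subseteq B$ and choose a basepoint $p\in V\setminus Y$. Since $U$ and $V$ are both good relative to $Y$ and $V\subseteq U$, the inclusion $V\setminus Y\hookrightarrow U\setminus Y$ is a homotopy equivalence; in particular the induced map $\pi_1(V\setminus Y,p)\to \pi_1(U\setminus Y,p)$ is an isomorphism.

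The chain of inclusions $V\setminus Y\subseteq B\setminus Y\subseteq U\setminus Y$ factors this isomorphism as
\[
\pi_1(V\setminus Y,p)\to \pi_1(B\setminus Y,p)\to \pi_1(U\setminus Y,p),
\]
and composing with the monodromy representation $\pi_1(U\setminus Y,p)\to \mathrm{Aut}(\mathcal{L}_p)$ shows that the images of all three groups in $\mathrm{Aut}(\mathcal{L}_p)$ form an ascending chain whose outer terms coincide. Hence the middle image agrees with them as well, and taking invariants gives
\[
\mathcal{L}_p^{\pi_1(U\setminus Y,p)}=\mathcal{L}_p^{\pi_1(B\setminus Y,p)}.
\]
Combining this with Equation~\eqref{fgi} applied to $U$ and the identification from the first paragraph yields $H^0(x,\mathcal{L})=H^0(B\setminus Y,\mathcal{L})$, realized by the evident restriction map.

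The argument is essentially formal once the good-neighborhood sandwich is in place. The only step requiring any genuine care is checking that good neighborhoods form a fundamental system at~$x$ so that $V$ can be chosen inside~$B$, which is exactly Proposition~\ref{pgood}; beyond that I do not expect any substantive obstacle.
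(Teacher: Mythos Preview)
Your argument is correct and is essentially the same as the paper's: both use the sandwich $V\setminus Y\subseteq B\setminus Y\subseteq U\setminus Y$ with $V,U$ good to force the images of the three fundamental groups in $\mathrm{Aut}(\mathcal{L}_p)$ to coincide. The paper compresses this into the single phrase ``$\pi_1(U\setminus Y,b)$ is a retract of $\pi_1(B\setminus Y,b)$,'' invoking the retract discussion after Definition~\ref{lhtdef}, whereas you spell out the factorization explicitly.
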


\begin{proof}
Pick a point $b\in B\setminus Y$. 
We have $\HH^0(B\setminus Y,\mathcal{L})=\mathcal{L}_b^{\pi_1(B\setminus Y,b)}$.
But $\pi_1(U\setminus Y,b)$ is a retract of $\pi_1(B\setminus Y, b)$.  
So $\mathcal{L}_b^{\pi_1(B\setminus Y,b)}=\mathcal{L}_b^{\pi_1(U\setminus Y,b)}=
\HH^0(x,\mathcal{L})$.  
\end{proof}

We can also describe the space $\HH^k(x,\mathcal{L})$ sheaf
theoretically.  Write $j:X\setminus Y\to X$ for the inclusion.  Then
the group $\HH^k(x,\mathcal{L})$ is naturally isomorphic to
$(R^kj_*\mathcal{L})_x$; i.e., to the stalk at $x$ of the $k$th
higher direct image $R^kj_*\mathcal{L}$.

The following is certainly well known, but we sketch a short proof.

\begin{lemma}\label{lsurj}
  Suppose $X$ is a connected complex manifold and $Y$ is
  a nowhere dense closed analytic subspace.  Then, for $p\in X\setminus Y$, the
  homomorphism $\pi_1(X\setminus Y,p)\to \pi_1(X,p)$ is surjective.
\end{lemma}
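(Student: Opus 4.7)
The plan is to exploit the fact that a proper analytic subspace $Y$ of a complex manifold $X$ has real codimension at least two, so that one-dimensional objects (loops) can be pushed off $Y$ by transversality, while still being homotopic in $X$ to the original. Here is how I would carry it out.

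First, I would reduce to a triangulated setting. By a theorem of Lojasiewicz (already cited earlier for Fact~\ref{lmfact}), the pair $(X,Y)$ admits a triangulation in which $Y$ is a subcomplex. Since $Y$ is a nowhere dense analytic subspace of the complex manifold $X$, it has complex codimension $\geq 1$, hence real codimension $\geq 2$; therefore in this triangulation every simplex of $Y$ has codimension at least two in $X$. In particular, the $1$-skeleton $X^{(1)}$ of $X$ (for a suitable subdivision) can be taken to be disjoint from $Y$, so $X^{(1)}\subset X\setminus Y$.

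Next I would use the standard fact from algebraic topology that the inclusion of the $2$-skeleton $X^{(2)}\hookrightarrow X$ induces an isomorphism on $\pi_1$, and that $\pi_1(X^{(2)},p)$ is generated by loops in $X^{(1)}$ (based at a $0$-cell, with a change of basepoint if needed). Consequently every class in $\pi_1(X,p)$ is represented by a loop lying in $X^{(1)}\subset X\setminus Y$, which is exactly what surjectivity of $\pi_1(X\setminus Y,p)\to \pi_1(X,p)$ requires. The only mild technicality is moving the basepoint from a $0$-cell back to the prescribed point $p\in X\setminus Y$, which is handled by choosing a path in $X\setminus Y$ (possible since $X\setminus Y$ is connected by Fact~\ref{lmfact} applied locally and the fact that $X$ is connected).

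The main potential obstacle is simply being careful that one can arrange the triangulation so that $1$-simplices miss $Y$; a cleaner alternative, which I would use if the triangulation argument feels fussy, is to approximate a given loop $\gamma\colon[0,1]\to X$ based at $p$ by a smooth (or piecewise linear) loop $\gamma'$ that is transverse to a Whitney stratification of $Y$. Since $\dim_{\mathbb{R}}\gamma'([0,1])=1$ and every stratum of $Y$ has real codimension at least two, transversality forces $\gamma'([0,1])\cap Y=\emptyset$, while a sufficiently close approximation is homotopic to $\gamma$ rel endpoints in $X$. Either route establishes surjectivity.
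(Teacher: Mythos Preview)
Your transversality route is correct and gives a valid proof. The triangulation route, however, is stated carelessly: if $Y$ is a \emph{subcomplex} of the triangulation, then its vertices are vertices of $X$, so the $1$-skeleton $X^{(1)}$ necessarily meets $Y$; no subdivision fixes this while keeping $Y$ a subcomplex. What you would actually need is a triangulation whose $1$-skeleton is in general position with respect to $Y$ (so $Y$ is \emph{not} a subcomplex), which is just the transversality argument in disguise. So it is cleanest to go directly to your second route, which is fine: $Y$ admits a Whitney stratification with strata of real codimension $\geq 2$, a generic smooth perturbation of a loop (rel the basepoint $p\in X\setminus Y$) is transverse to every stratum and hence disjoint from $Y$, and a $C^0$-small perturbation is homotopic to the original in $X$.

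The paper takes a different and shorter path. It uses the covering-space criterion: $\pi_1(X\setminus Y,p)\to\pi_1(X,p)$ is surjective if and only if the preimage of $X\setminus Y$ in the universal cover $\tilde X\to X$ is connected. That preimage is $\tilde X\setminus\pi^{-1}(Y)$, and it is connected because $\tilde X$ is a connected complex manifold and $\pi^{-1}(Y)$ is a closed, nowhere dense analytic subspace (the same complex-analytic connectedness fact invoked for Fact~\ref{lmfact}). Your approach has the virtue of working for any closed subset of real codimension $\geq 2$ in a smooth manifold, with no complex structure needed; the paper's approach avoids stratifications and transversality altogether and is essentially two lines.
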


\begin{proof}[Sketch]
Let $\pi:\tilde{X}\to X$ denote the universal cover of $X$.  Then 
$\pi^{-1}(X\setminus Y)=\tilde{X}\setminus \pi^{-1}(Y)$ is connected because $\tilde{X}$ 
is a complex manifold and $\pi^{-1}(Y)$ is a closed, nowhere dense, 
complex analytic subspace~\cite[p.~133]{GR}.
It follows that $\pi_1(X\setminus Y,p)$ acts transitively on $\pi^{-1}(p)$.
If we pick a point $\tilde p$ in $\pi^{-1}(p)$, we get an identification
of $\pi^{-1}(p)$ with $\pi_1(X,p)$.   Moreover, the action of $\pi_1(X,p)$
on $\pi^{-1}(p)$ corresponds to the left regular action of $\pi_1(X,p)$ on
itself.   From this, we see that  
the action of $\pi_1(X\setminus Y,p)$ on $\pi_1(X,p)$
induced by the group homomorphism $\pi_1(X\setminus Y,p)\to \pi_1(X,p)$
is transitive.  
Therefore the 
map of fundamental groups is surjective.
\end{proof}

Now, suppose $X$ is a connected, complex manifold, $Y$ is a closed,
nowhere dense, analytic subspace, $\mathcal{L}$ is a local system on
$X\setminus Y$, $x\in Y$ and $q\in X\setminus Y$.  The \emph{monodromy
  group of $\mathcal{L}$} is the image $M$ of the group homomorphism
$\pi_1(X\setminus Y,q)\to \mathbf{GL}(\mathcal{L}_q)$.  By
Lemma~\ref{lsurj}, $M$ is unchanged if we replace $Y$ by a larger
closed, nowhere dense analytic subset $Y'$.  That is, if $Y'$
contains $Y$ (but not $q$), then the image of the homomorphism
$\pi_1(X\setminus Y',q)$ is also $M$.

Suppose $U$ is a good fundamental neighborhood of $x$ relative
to $Y$ and $p\in U\setminus Y$. 
Then the \emph{local monodromy group of
  $\mathcal{L}$ at $y$} is the image $H=H(y)$ of the composition
$$
\pi_1(U\setminus Y,p)\stackrel{\varphi_{\gamma}}{\to} \pi_1(X\setminus Y,q)\to M
$$
where $\gamma$ is a path from $p$ to $q$.  It depends on the choice of $U$, $\gamma$ and $p$, but only up to conjugacy by an element of $M$. 
Like $M$ itself, $H$ is independent of $Y$ in the sense that enlarging $Y$
does not change $H$. 

\section{Palindromic Betti numbers and the local invariant cycle
theorem}\label{sbn}

\subsection{Main Theorems}
\label{ss-main}
A crucial tool in our argument is the
local invariant cycle theorem of
Beilinson, Bernstein, and Deligne (BBD), which we state here in the
generality relevant to this paper.  

\begin{theorem}[{\cite[Corollaire 6.2.9]{bbd}}]
\label{thm:bbd}
Suppose $f:X\arr Y$ is a proper morphism of smooth, separated,
irreducible complex schemes.  Let $y\in Y(\mathbb{C})$,
and set $X_y:=f^{-1}(y)$.
Suppose that $U$ is a Zariski dense, Zariski open subset of $Y$
such that the restriction of $f$ to $f^{-1}U$ is smooth.  Then, for
every sufficiently small ball $B=B(y)$  centered at  $y$ as in Theorem~\ref{t-dimca},
the natural map 
\begin{equation}\label{e.lic}
  \HH^i(X_y,\mathbb{C})\arr
\HH^0(B(y)\cap U, R^if_*\mathbb{C})
\end{equation}
is a surjection.
Moreover,  $B(y)\cap U$ is nonempty, and  we have
\begin{equation}
  \label{e.lic-pi1}
  \rH^0(B(y)\cap U, R^if_*\mathbb{C})=\rH^i(X_z)^{\pi_1(B(y)\cap U,z)}
\end{equation}
for any $z\in B(y)\cap U$.   
\end{theorem}

The vector space $\HH^0(B(y)\cap U, R^if_*\mathbb{C})$ is called the
space of \emph{local invariant cycles}, and we call the map 
in~\eqref{e.lic} (which we will explain in some detail below) the \emph{local invariant
  cycle map}.  The assumption that $f$ is smooth and proper over $U$
implies that the sheaves $R^if_*\mathbb{C}$ restrict to local systems
on $U$. 
It follows from Corollary~\ref{cor-nowhere dense} that, for a fixed $U$, the spaces
$\HH^0(B(y)\cap U, R^if_*\mathbb{C})$ are canonically isomorphic for
$B(y)$ sufficiently small.
Moreover, as BBD point out, up to a
canonical isomorphism,~\eqref{e.lic} is independent of $U$. 

When $Z$ is a scheme, we write $d_Z:=\dim Z$ to save space.  This notation
is useful in the main result of this section, which is the following.

\begin{theorem}\label{t1}
  Suppose that $f:X\arr Y$ is a projective morphism 
 between smooth, separated, irreducible complex schemes; and let $y$ be a closed point of~$Y$.  
Set $d=d_X-d_Y$. 
Then the local
  invariant cycle map~\eqref{e.lic} is an isomorphism for all $i\in\mathbb{Z}$ if and only if
  \begin{equation}
\label{e.t1}
\dim \HH^i(X_y,\mathbb{C})=\dim \HH^{2d-i}(X_y,\mathbb{C})
\end{equation}
for all $i$.  
\end{theorem}

The rest of this section is devoted to a proof of Theorem~\ref{t1}.
Our proof uses the ideas behind the proof of Theorem~\ref{thm:bbd}
extensively.  Somewhat unfortunately for us, in~\cite{bbd}, the proof
of Theorem~\ref{thm:bbd} in the case of complex varieties is more or
less left to the reader to construct using the proof given earlier in
the book of the $\ell$-adic analogue of the theorem for schemes of
finite type over a finite field.  While it is probably fairly clear 
how to do this for 
anyone who has made it to the last few pages of~\cite{bbd} 
where Theorem~\ref{thm:bbd} appears, it makes it difficult for us to cite 
passages in the text where specific results we need are proved. 
In the original arXiv version of this paper~\cite{BrosnanChow},
we handled this essentially by assuming that the reader was familiar
with the proof of Theorem~\ref{thm:bbd}.   However, we realized that this 
approach has serious disadvantages. 
So, to help make our proof of
Theorem~\ref{t1} as clear and precise as possible, we have decided to
include a proof of Theorem~\ref{thm:bbd} in the complex case.

One advantage of this is that we are able to use the Kashiwara
conjecture for semisimple perverse sheaves~\cite{KashiwaraConj}, which has been proved by T.~Mochizuki and independently by
combining work of A.J. de Jong, Drinfeld, Gaitsgory, and
B\"ockle--Khare~\cite{mochizuki-1, mochizuki-2, dJ-conj,
  drinfeld-kashiwara, gaitsgory-dj, boekle-khare-2}.  This allows us
to point out strong forms of Theorems~\ref{thm:bbd} and
Theorem~\ref{t1}.  See Theorem~\ref{big-lic} and
Theorem~\ref{t-pal-main} below.

\subsection{The local invariant cycle map}
\label{sub-lic}
\subsubsection{Geometric definition}  
\label{subsub-geom}
The map~\eqref{e.lic} in the statement of Theorem~\ref{thm:bbd} can be defined in two equivalent ways, geometrically and
sheaf theoretically.  We start with the geometric component of the
definition.  To explain it, write $X_S$ for $f^{-1}S$ when
$S\subset X$, and write $f_S$ for the map $X_S\to S$ coming from the
restriction of $f$.  Then, for $B=B(y)$ a sufficiently small ball, the
restriction morphism
\begin{equation}
  \label{e.resmor}
  \rH^i(X_B,\mathbb{C})\to \rH^i(X_y,\mathbb{C})
\end{equation}
is an isomorphism.   This follows from proper base change. 
On the other hand, we have a map 
\begin{equation}
  \label{e.ss}
  \rH^i(X_{B\cap U},\mathbb{C})\to \rH^0(B\cap U, R^if_*\mathbb{C})
\end{equation}
coming from the edge homomorphism in the Leray--Serre spectral sequence applied
to the fibration  $f_{B\cap U}:X_{B\cap U}\arr B\cap U$.  
Composing the map in \eqref{e.ss} with the inverse of the map in~\eqref{e.resmor} gives the local invariant cycle map~\eqref{e.lic}. 

\subsubsection{General definition}  
\label{gen-def}

Theorem~\ref{thm:bbd} is proved (and even stated) in~\cite{bbd} in a
much more general sheaf-theoretic context.  This allows for greater
generality in the statements, but it also allows for more flexibility
in the proof.  As we will use this generality to prove
Theorem~\ref{t1}, we now explain how to generalize the local invariant cycle map
for complexes of sheaves on $Y$.  This essentially involves unwinding the
definition of the edge homomorphism in the hypercohomology spectral sequence.

Suppose $Y$ is any scheme of finite type over $\mathbb{C}$.  We write
$\Dbc Y$ for the bounded derived category of sheaves of complex vector
spaces on $Y$ with constructible cohomology.  Given a complex
$K\in\Dbc Y$ and an integer $i$, we write $H^iK$ for the $i$th
cohomology sheaf of $K$.  For $j\in\mathbb{Z}$, we write $K[j]$ for
the shift of~$K$ by $j$ units to the left.  So $H^i K[j]=H^{i+j}K$.
If $y\in Y(\mathbb{C})$ and $\mathcal{F}$ is a sheaf on~$Y$, then, as
usual, $\mathcal{F}_y$ denotes the stalk of $\mathcal{F}$ over $y$.
Similarly, if $K$ is a complex, $K_y$ denotes the object in the derived
category of $\mathbb{C}$ vectors spaces obtained by taking stalks.
Since taking stalks is exact, we have a canonical isomorphism
$H^i(K_y)=(H^iK)_y$.  Usually, we simply write $H^iK_y$ for this vector space.

Now, let $j:U\hookrightarrow Y$ denote the inclusion of a Zariski open
subset and let $y\in Y(\mathbb{C})$ be a closed point.  Adjunction then
gives us maps 
\begin{align}
  \lambda:H^i K&\arr j_*j^* H^i K\label{e.lambdagen}\\
  \lambda(y):H^i K_y&\arr (j_*j^* H^i K)_y\label{e.lambday}
\end{align}
which we call the \emph{(generalized) local invariant cycle maps}. 
Here we get \eqref{e.lambday} from \eqref{e.lambdagen} by taking stalks,
and $j_*$ denotes the pushforward of sheaves (not the derived pushforward
as it often does in~\cite{bbd}).
The functor $j^*$ is just the restriction to $U$.  So for $M\in\Dbc Y$, 
$j^*M$ is synonymous with $M_{|U}$. 

When $K=Rf_*\mathbb{C}$ as in Theorem~\ref{thm:bbd}, then 
$H^iK_y=\HH^i(X_y,\mathbb{C})$ by proper base change, and 
$(j_*j^* H^i K)_y =(j_*j^* R^i f_*\mathbb{C})_y$ which is equal to 
$\rH^0(B(y)\cap U,R^if_*\mathbb{C})$ for $B(y)$ sufficiently small 
by the constructibility of the sheaves involved.  (Compare with 
the statement of the local invariant cycle theorem in~\cite[Corollaire 6.2.9]{bbd}).  Moreover, it is easy to see that $\lambda(y)$ agrees with
the geometric description of~\eqref{e.lic} in \S\ref{subsub-geom}.

Note that $\lambda$ and $\lambda(y)$ are natural in $K$.  To make this
explicit, write $\Shv_c Y$ for the category of constructible sheaves
of $\mathbb{C}$ vector spaces on $Y$.  Then $K\leadsto H^iK$ and
$K\leadsto j_*j^*H^iK$ are both additive functors from $\Dbc Y$ to
$\Shv_c Y$, and $\lambda$ is a natural transformation from the first
to the second (as it comes from the adjunction, which is itself natural).
So write $\lambda_K^i$ for the map in  \eqref{e.lambdagen} to keep track of the
index and the complex.  Then,  $\lambda^i_{K[j]}= \lambda^{i+j}_{K}$, and,  for $K_1,K_2\in\Dbc Y$, $\lambda^i_{K_1\oplus K_2}=\lambda^i_{K_1}\oplus
\lambda^i_{K_2}$.   Similar remarks hold obviously for $\lambda(y)$. 

Following~\cite{bbd}, we are going to isolate a class of objects $K$
in $\Dbc Y$ on which~$\lambda$, and thus $\lambda(y)$, turn out to be
surjections.  However, it might help to start out with a trivial example
along with a trivial non-example.

\begin{example}
  Let $Y=\mathbb{A}^1_{\mathbb{C}}$, the affine line and let
  $j:U\hookrightarrow Y$ denote the inclusion of the complement of the
  origin.  Consider the sheaves $\mathcal{F}=j_! \mathbb{C}_U$ and
  $\mathcal{G}=\mathbb{C}_Y$ as objects in $\Dbc Y$.  
For $i\neq 0$, both the source and target of $\lambda$ are $0$ for both 
$\mathcal{F}$ and~$\mathcal{G}$. So there is nothing interesting happening.
For $i=0$, $\lambda$ is an isomorphism for $\mathcal{G}$.   But for $\mathcal{F}$ it is the inclusion of $j_!\mathbb{C}_U$ in $\mathbb{C}_Y$, which is not 
a surjection (in $\Shv_c Y$): we have $\mathcal{F}_0=0$ while 
$(j_*j^*\mathcal{F})_0=\mathbb{C}$.   
\end{example}

\subsection{Recollections concerning Perverse Sheaves}
\label{recs}
The definition of the class of objects in $\Dbc Y$ we are looking for
is tied up with the theory of perverse sheaves.  So we will
explain the part of that theory that we need here.  

For the rest of this subsection, we fix a scheme $Y$ which is reduced and
of finite type over $\mathbb{C}$.  We point out that everything we are
going to do goes on in $\Dbc Y$ and is essentially topological with respect
to $Y(\mathbb{C})$.  So we do not really need the
assumption that $Y$ is reduced: if $W$ is a scheme of finite type over
$\mathbb{C}$, then $\Dbc W$ is the same as $\Dbc W_{\red}$.  But it
makes it slightly more convenient to say certain things.  On the other
hand, while we do not require $Y$ to be separated, this is mostly to
conform to the setup of~\cite{bbd}.  In the end, the theorems we are
really interested in are local near a closed point in $Y$.  So we
could just as well assume that $Y$ is separated (or even affine).

We write $\Perv Y$ for the category of perverse sheaves on $Y$ (for
the middle perversity).  This is a full subcategory of the derived
category $\Dbc Y$.  For $K\in\Dbc Y$ and $i\in\mathbb{Z}$, we write
$\pH^iK$ for the $i$th perverse cohomology sheaf.  So, while $H^iK$ is
a usual sheaf on $Y$, $\pH^iK$ is an object in $\Perv Y$.  We have $\pH^i K[j]=\pH^{i+j}K$.

Suppose $j:V\hookrightarrow Y$ is a (locally closed) immersion of schemes,
and $K$ is a perverse sheaf on $V$.  Then we write
$j_{!*}K$ for the intermediate extension of $K$ to
$Y$, a perverse sheaf on $Y$ supported on the Zariski closure of $V$.
The intermediate extension is an extension of $K$ in the sense that
there is a natural isomorphism $j^*j_{!*}K=K$.   In other words, the restriction
of the intermediate extension to $V$ is just $K$.   In fact, we have the following
characterization of the intermediate extension.

\begin{theorem}[BBD]
\label{no-sub-quot}
  The intermediate extension $j_{!*}K$ is the unique extension of $K$
  in $\Perv Y$ supported on $\overline{V}$  with no nontrivial sub or quotient object supported
  on $\overline{V}\setminus V$.  
\end{theorem}

\begin{proof}
  This follows from~\cite[Corollaire 1.4.25]{bbd}. 
\end{proof}

\begin{proposition}
  \label{prop-van-comp}
  Suppose $V$ is a (locally closed) subscheme of $Y$ and
  $K$ is a perverse sheaf on $V$.   Then
 \begin{equation}
\label{e.van}
H^i(j_{!*}K)=0 \text{ for $i\notin [-d_V,0]$.} 
\end{equation}
Moreover, if we write  $j:V\hookrightarrow Y$
for the inclusion,  then 
\begin{equation}
\label{e.comp}
  H^{-d_V}(j_{!*}K)=j_*H^{-d_V}K.
\end{equation}
\end{proposition}

\begin{proof}
  Since $K$ is perverse, so is $j_{!*}K$, and it is supported on the
  Zariski closure $\overline{V}$ of $V$.  Therefore $H^ij_{!*}K=0$ for
  $i>0$ by \cite[Definition 2.1.2]{bbd}.  But, by the discussion in the
  two paragraphs just after Definition 2.1.2, $H^ij_{!*}K=0$ for $i<-d_V$.
  So~\eqref{e.van} is proved.  

  Since $j_{!*}K$ is an extension of
  $K$ supported on $\overline{V}$, $j^*H^{-d_V}(j_{!*}K)=H^{-d_V}K$.  So
  adjunction gives a natural morphism
  $H^{-d_V}(j_{!*}K)\to j_*H^{-d_V}K$.  Now~\eqref{e.comp} follows 
  easily from Deligne's formula \cite[Proposition 2.1.11]{bbd}, which
  computes $j_{!*}K$ in terms of a series of derived
  pushforwards and truncations.
\end{proof}

If $V$ is smooth and irreducible and $\mathcal{L}$ is a local system
on $V$, then $\mathcal{L}[d_V]$ is a perverse sheaf on $V$.  So the
intermediate extension $\IC\mathcal{L}:=j_{!*}\mathcal{L}[d_V]$ is a
perverse sheaf on $Y$, which is colloquially called the \emph{IC} sheaf or
the \emph{intersection cohomology} sheaf. 
For $d_V>0$, Deligne's formula actually implies that $H^i\IC(\mathcal{L})=0$ for $i\notin [-d_V,0)$.    So we get a slightly stronger vanishing
statement than~\eqref{e.van}.  
Note that, if $V'$ is a Zariski
dense, Zariski open subset  of $V$, then
$\IC(\mathcal{L}_{|V'})=\IC\mathcal{L}$, i.e., the two perverse
sheaves are canonically isomorphic~\cite[Lemme 4.3.2]{bbd}.

By~\cite[Theorem 4.3.1]{bbd}, a perverse sheaf $K$ on $Y$ is simple (as an object in the abelian category $\Perv Y$) if
and only if it is isomorphic to $\IC\mathcal{L}$ where $\mathcal{L}$
is an irreducible local system on a smooth, irreducible subscheme $V$ as
above.
So, a perverse sheaf $K$ is semisimple if and only if it is a direct
sum of such sheaves.   (Such a direct sum is necessarily finite because the
category of perverse sheaves is artinian.)   Suppose $Z$ is a closed subvariety 
of $Y$ (that is, $Z$ is an integral, closed subscheme).   
Following M.~Saito's notation 
from~\cite{SaitoKaehler}, we say that a perverse sheaf $K$ has \emph{strict
  support $Z$} if it is supported on $Z$ and has no proper sub or quotient
object supported on a proper subscheme of $Z$.   We write $\Perv_Z Y$
for the full subcategory of $\Perv Y$ consisting of perverse sheaves
with strict support~$Z$.   By Theorem~\ref{no-sub-quot}, if $\mathcal{L}$ is a local system on a non-empty, smooth, Zariski open subscheme 
$V$ of $Z$, then $\IC\mathcal{L}$ has strict support $Z$.   It follows
that any semisimple perverse sheaf $K$ on $Y$ can be written as a direct
sum
\begin{equation}
  \label{dsum}
  K=\oplus_Z K_Z
\end{equation}
where $Z$ ranges over all closed subvarieties of $Y$ and $K_Z\in \Perv_Z Y$
(with $K_Z=0$ for all but finitely many $Z$).
This decomposition is easily seen to be  unique (as there are no nonzero
morphisms between objects in $\Perv_ZY$ and $\Perv_{Z'}Y$ for $Z\neq Z'$).

Obviously, if $K$ is simple, then we must have $K_Z=0$ for all but one irreducible closed subscheme $Z$ of $Y$.   We call this subscheme the \emph{strict support of~$K$}.

\begin{lemma}
\label{suplem}
Suppose $K$ is a perverse sheaf on a scheme $Y$ of finite type over
$\mathbb{C}$, and let $j:V\hookrightarrow Y$ denote the inclusion of a
 Zariski dense, Zariski open subset.  Then $j^*K$ is perverse
on $V$.  If $K$ is simple with strict support equal to an irreducible
component of $Y$, then 
  \begin{equation}
\label{eresv}
  K=j_{!*}j^*K.
\end{equation}
\end{lemma}

\begin{proof}
  If $j$ is an open immersion (or even an \'etale morphism), then $j^*$
  always takes perverse sheaves to perverse
  sheaves~\cite[Corollaire 2.2.6 (ii)]{bbd}.  This proves the first assertion.

  Now suppose $K$ is simple with strict support equal to an
  irreducible component $Z$ of $Y$. Then  $K$ 
is an extension of $j^*K$ with no non-trivial sub or quotient object supported 
on $Z\setminus V$.    Since,  by Theorem~\ref{no-sub-quot}, $j_{!*}j^*K$ is the
  unique such extension, it follows that $K=j_{!*} j^*K$.  
\end{proof}

\begin{lemma}
  \label{l.obvious}
  Suppose $Y$ is a scheme of finite type over $\mathbb{C}$
  and let $\{\mathcal{F}\}_{i=1}^n$ be a finite collection of 
 sheaves on $Y$.  If $\mathcal{F}:=\oplus\mathcal{F}_i$ is a local system, then each $\mathcal{F}_i$ is as well. 
\end{lemma}
\begin{proof}
  Any idempotent $p\in \End\mathcal{F}$ is locally constant on
  $\mathcal{F}$. In particular, $p$ has locally constant rank.  So
  $\ker p$ and $\ker 1-p$ are local systems.  The result then follows
  by induction.
\end{proof}

\begin{definition}
  \label{d.mol}
  Suppose $U$ is a Zariski open subset of $Y$ and $K\in\Dbc Y$.
We say that $U$ is a \emph{mollifying} subset for $K$ if $U$
is Zariski dense in $Y$ and, for
all $i\in\mathbb{Z}$, $H^iK_{|U}$ is a local system on $U$. 
\end{definition}

\begin{lemma}
  \label{suplem2}
  Suppose $K\in\Dbc Y$.  Then $K$ has a mollifying subset.   In fact, we can 
even find one which is smooth.  
\end{lemma}

\begin{proof}
  This follows from generic smoothness and the definition of a
  constructible sheaf.
\end{proof}

\begin{remark}
  It turns out that we will not really need the existence of smooth 
  mollifying subsets, and in~\cite[Corollaire 6.2.9]{bbd},
  BBD do not use it. 
\end{remark}

\begin{proposition}
\label{prop-unstable}
  Suppose $K$ is a simple perverse sheaf on $Y$ with strict support $Z$, and 
  $j:U\hookrightarrow Y$ is the inclusion of a mollifying subset for $K$.   
\begin{enumerate}
\item If $Z$ is an irreducible component of $Y$, then 
$K_{|U}=\mathcal{M}[d_Z]$ where $\mathcal{M}:=H^{-d_Z}K_{|U}$, and 
$K=j_{!*}\mathcal{M}[d_Z]$. 
Moreover, $j_*j^*H^iK=0$ for $i\neq -d_Z$ and the local invariant
cycle map $\lambda_K^{-d_Z}$ is an isomorphism. 
\item Otherwise 
$j^*K=0$. Therefore $j_*j^*H^iK=0$ for all $i$.  
\end{enumerate}
\end{proposition}

\begin{proof}
  Suppose $V$ is an irreducible component of $U$.  If $Z$ does not contain
$V$, then, for each $i$, $H^iK_{|V}$ is a local system on $V$ supported on the  
closed, proper subscheme $Z\cap V$ of $V$.  Since $V$ is connected, it follows that $H^i K_{|V}=0$ for all~$i$.   This proves (2).   

So assume $Z$ is an irreducible component of $Y$ and let $V=Z\cap U$.
Since $U$ is dense in $Y$, $V$ is dense in $Z$.  So $d_V=d_Z$. 
We then have $H^iK=0$ for $i\notin [-d_Z,0]$ by
Proposition~\ref{prop-van-comp}.  On the other hand, if $i>-d_Z$, then 
$\dim\supp H^iK_{|V}\leq -i<d_Z$.   So again $H^iK_{|V}=0$ as it is a 
local system.     Therefore $H^iK_{|U}=0$ for all $i\neq -d_Z$, and it follows
that $K_{|U}=\mathcal{M}[d_Z]$ where $\mathcal{M}=H^{-d_Z}K_{|U}$. This shows
that $j_*j^*H^iK=0$ for $i\neq -d_Z$.  

 By Lemma~\ref{suplem}, we then get that $K=j_{!*}\mathcal{M}[d_Z]$.  
Then, by Proposition~\eqref{e.comp}, $H^{-d_Z}K=H^{-d_Z}(j_{!*}\mathcal{M}[d_Z])
=j_*H^{-d_Z}(\mathcal{M}[d_Z])=j_*j^*H^{-d_Z}K$.   So the local invariant 
cycle map $\lambda_K^{-d_Z}$ is an isomorphism.  
\end{proof}

\begin{corollary}
\label{cor-main-drive}
  Suppose $K$ is a simple perverse sheaf on $Y$, 
and $j:U\hookrightarrow Y$ is the inclusion
of a mollifying open subset for $K$. 
\begin{enumerate}
\item The local invariant cycle map $\lambda^i_K:H^iK\to j_{*}j^*H^iK$ is a 
surjection for all $i$.
\item Suppose every irreducible component of $Y$ has dimension $d_Y$
and suppose $i$ is an integer not equal to $-d_Y$.  Let $y\in Y(\mathbb{C})$
be a closed point of $Y$, and write $\lambda^i_K(y)$ for the map in 
~\eqref{e.lambday}.    Then $\lambda^i_K(y)$ is 
an isomorphism if and only if $H^iK_y=0$.
\item If $Y$ is equidimensional as in (2), then $\lambda^{-d_Y}_K$ is
  an isomorphism.  
\end{enumerate}

\end{corollary}
\begin{proof}
  Proposition~\ref{prop-unstable} shows that either $\lambda^i_K$ is
  an isomorphism or $j_*j^*H^iK=0$.  So (1) holds, because, in either
  case, $\lambda^i_K$ is a surjection.  Similarly, (3) holds because,
  in case (1) of Proposition~\ref{prop-unstable}, $\lambda_K^{-d_Z}=\lambda_K^{-d_Y}$
  was proven to be an isomorphism and, otherwise, $H^{-d_Y}K=0$ by
  Proposition~\ref{prop-van-comp}, which, by (1), trivially implies that 
$\lambda^{-d_Y}_K$ is an isomorphism.

For (2), suppose $i\neq -d_Y$.   Then
$j_*j^*H^iK=0$, again by Proposition~\ref{prop-unstable}.
Therefore, $\lambda^i_K(y)$ is an isomorphism if and only
if $H^iK_y=0$.

\end{proof}

\begin{corollary}
\label{c-ind}
  Suppose $K$ is a simple perverse sheaf on $Y$ and $U$ and 
  $V$ are two mollifying open subsets for $K$ with
  inclusions $j_{U}$ (resp.\ $j_V$) into $Y$.  Then the local invariant cycle
  maps for $U$ and $V$ are canonically isomorphic.   More precisely,
  $U\cap V$ is also a mollifying subset, and, if we let $j_{U\cap V}:U\cap V\hookrightarrow Y$ denote the inclusion, then, 
  for each $i\in\mathbb{Z}$, 
we have have a commutative diagram
$$
\xymatrix{
             &  j_{U*}j_U^*H^i K\ar[rd]^{\res} & \\
H^i K\ar[ur]^{\lambda^i_K}\ar[dr]_{\lambda^i_K}\ar[rr]^{\lambda_K^i}      &  & j_{(U\cap V)*}j^*_{U\cap V} H^iK \\
             &   j_{V*}j_V^*H^i K\ar[ru]^{\res}  & \\
}
$$
where 
$\res$ denotes restriction.   Moreover, each map labeled $\res$ is an
iso\-morphism. 
\end{corollary}
\begin{proof}
  It  is obvious that $U\cap V$ is a mollifying subset, and it is also
  very easy to see that the diagram above commutes.    So, 
  let $Z$ be the strict support of $K$.  If $Z$ is not an irreducible
  component of $Y$ or if $i\neq -d_Z$, then, by
  Proposition~\ref{prop-unstable}, the right three vertices
  are all $0$.  So there is nothing to prove.

  Otherwise, all of the arrows labeled $\lambda^i_K$ are isomorphisms.
  So the commutativity of the diagram shows that the maps labeled $\res$
  are all isomorphisms.  
\end{proof}

\begin{definition}
\label{def-ss}
  Suppose $Y$ is a scheme of finite type over $\mathbb{C}$.   An object
  $K\in\Dbc Y$ is said to be \emph{semisimple} if $K\cong\oplus_{i\in\mathbb{Z}} (\pH^iK)[-i]$ with each summand a semisimple perverse sheaf. 
\end{definition}

\begin{remark}
  If $K\in\Dbc Y$, then it is not hard to see (directly from the definitions
in~\cite{bbd}) that 
 $\pH^i K=0$ for all but finitely many $i\in\mathbb{Z}$.
\end{remark}

\begin{lemma}
\label{l.obv2}
  Suppose $K$ is a semisimple object in $\Dbc Y$ and $U$ is a Zariski dense,
Zariski open subset of $Y$.  Then $U$ is mollifying for $K$ if and only
if it is mollifying for every sub perverse sheaf of every perverse
cohomology sheaf $\pH^iK$. 
\end{lemma}

\begin{proof}
  Since the direct sum of local systems is a local system, it is obvious
that, if $U$ mollifies all the $\pH^iK$, it mollifies $K$ as well.  
This proves one direction of the assertion. The converse direction follows from
Lemma~\ref{l.obvious}. 
\end{proof}

\begin{theorem}
\label{main-lic-surj}
Suppose $K$ is a semisimple object in $\Dbc Y$ and let 
$U$ be mollifying for $K$. Then, for each $i$,
  the local invariant cycle map $\lambda^i_K$ is a surjection. 
\end{theorem}

\begin{proof}
  Using Lemma~\ref{l.obv2} along with the naturality of $\lambda$ explained 
in \S\ref{gen-def}, we can assume that $K$ is a simple perverse sheaf.  Then the result follows
  from Corollary~\ref{cor-main-drive} (1).  
\end{proof}

Theorem~\ref{main-lic-surj} and the decomposition theorem, \cite[Th\'eor\`eme 6.2.5]{bbd}, are the main 
ingredients in the proof of Theorem~\ref{thm:bbd}.  Analogously, 
the main ingredients of the proof of Theorem~\ref{t1} are the decomposition
theorem, the perverse hard Lefschetz theorem and the next result.

\begin{theorem}
  \label{t-good-cond}
  Suppose that $Y$ is equidimensional and $K$ 
 is a semisimple object in $\Dbc Y$.  Let 
  $y\in Y(\mathbb{C})$ be a closed point, and  
let $j:U\hookrightarrow Y$ be the inclusion of a mollifying subset for $K$.   Then the following are
  equivalent.
\begin{enumerate}
\item $\lambda(y):H^iK_y\to (j_*j^*H^iK)_y$ is an isomorphism for all $i$.
\item For all $j$ and all $i\neq -d_Y$, $H^i(\pH^jK)_y=0$. 
\end{enumerate}
\end{theorem}

\begin{proof} By shifting and passing to direct summands via~\ref{l.obv2},
we can assume that $K$ is a simple perverse sheaf.   
  The theorem is then a direct consequence of Corollary~\ref{cor-main-drive} (2) and (3).  
\end{proof}

\subsection{Proof of the Local Invariant Cycle Theorem}

Suppose now that $X$ is a reduced scheme of finite type
over $\mathbb{C}$ and $K\in\Dbc X$.  If $i:W\to X$ is the inclusion of
a subscheme, we write $\HH^j(W,K):=\HH^j(W, i^*K)$.  

Suppose $Y$ is another reduced scheme of finite type over
$\mathbb{C}$ and $y\in Y(\mathbb{C})$ is a closed point.  Then by a \emph{ball} centered at $y$, we mean any open neighborhood of $y$ in  $Y(\mathbb{C})$ which is obtained
by intersecting an affine open neighborhood $V$ of $y$ embedded in 
$\mathbb{A}^n_{\mathbb{C}}$ with a ball in $\mathbb{C}^n$.   

\begin{remark}
  We have to distinguish this notion of a ball from the notion of a ball
as in Theorem~\ref{t-dimca} which we use when $Y$ is smooth.  The issue
is that, if $Y$ is singular, then we can not necessarily find an open neighborhood of $y$ which is homeomorphic to an actual ball in $\mathbb{C}^{d_Y}$.  
\end{remark}

In the proof of the next theorem, we are going to use the Kashiwara conjecture.
As we mentioned above in \S\ref{ss-main}, this is now a theorem owing to the
work of many authors.   We will not cite these authors again here, but
we point out that Drinfeld's paper~\cite{drinfeld-kashiwara} 
is short and has a very efficient
statement of the part of the conjecture having to do with perverse sheaves
(which is the part that we~use).

\begin{theorem}
\label{big-lic} Suppose $f:X\to Y$ is a proper morphism of reduced  schemes of finite type over $\mathbb{C}$,  and let $K$ be a 
  semisimple object in $\Dbc X$.  Let $y\in X(\mathbb{C})$ be a closed
  point and set $X_y=f^{-1}(y)$.  Let $U\subset Y$ be a mollifying open subset
for the complex $Rf_*\mathbb{C}$.  
Then, for every sufficiently small ball $B(y)$ centered at~$y$, the local 
invariant cycle map 
induces a natural surjection 
\begin{equation}
\label{gen.map}
\rH^i(X_y,K)\twoheadrightarrow \rH^0(B(y)\cap U, R^if_*K).
\end{equation}
Moreover, the target of \eqref{gen.map} is independent of the mollifying
open subset $U$.  
\end{theorem}

\begin{proof}
  First note that, since the whole theorem is a Zariski local
  statement near~$y$, we can easily reduce to the situation where $Y$
  (and, therefore, $X$) are separated.  For example, we can replace
  $Y$ with an affine Zariski open neighborhood of~$y$ and $X$ with the
  inverse image of that neighborhood.  So we assume $X$ and~$Y$ are
  separated.  (We do this in order to freely use work on Kashiwara's
  conjecture.)

  The source of \eqref{gen.map} is naturally isomorphic to $H^iK_y$ by
  proper base change.  The target is naturally isomorphic to
  $(j_*j^*H^iK)_y$ for $B(y)$ sufficiently small.  Since $K$ is
  semisimple, it follows from the Kashiwara conjecture that $Rf_*K$ is semisimple.  Therefore the surjectivity
  result follows from Theorem~\ref{main-lic-surj}.

The independence of $U$ follows from Corollary~\ref{c-ind}.  
\end{proof}

\begin{proof}[Proof of Theorem~\ref{thm:bbd}]
  Since $X$ is smooth, the constant sheaf $\mathbb{C}=\mathbb{C}_X$ is
  semisimple. In fact, we even assumed that $X$ is irreducible, so 
  $\mathbb{C}[d_X]$ is the IC sheaf of the simple local systems
  $\mathbb{C}_X$.  Therefore $\mathbb{C}[d_X]$ is a simple perverse sheaf.
  Since $f$ is smooth and proper over $U$, $U$ is a mollifying subset for
  $Rf_*K$.  
  So take $K=\mathbb{C}$ in Theorem~\ref{big-lic}.
  This proves that~\eqref{e.lic} of Theorem~\ref{thm:bbd} holds.

  Since a ball $B(y)$ as in Theorem~\ref{t-dimca} is homeomorphic to
  an open ball in $\mathbb{C}^{d_Y}$, $B(y)\cap U$ is non-empty and
  connected.  This follows from Fact~\ref{lmfact} in the case $d_Y>0$
  and it is obvious otherwise.  Then~\eqref{e.lic-pi1} follows from
  proper base change and Corollary~\ref{c.equiv}.
\end{proof}

\begin{remark}
  In~\cite{bbd}, BBD prove Theorem~\ref{big-lic} for $K$ semisimple of
  geometric origin.  (These are essentially the complexes that can be
  obtained from the constant sheaf by the standard operations of sheaf
  theory, e.g., Grothendieck's six operations.)  The reason for the restriction
was that they were only able to prove the decomposition theorem~\cite[Th\'eor\`eme 6.2.5]{bbd} for such complexes.   

BBD also state Theorem~\ref{big-lic} for arbitrary schemes of finite type (without the restriction that $X$ or $Y$ be reduced).  But, as we mentioned at the
beginning of \S\ref{recs}, there is actually no loss in generality in assuming that the schemes involved are reduced.  
\end{remark}

\subsection{The Palindromicity Theorem}

It will be convenient to introduce some notation concerning
palindromic polynomials.

\begin{definition}
  Suppose $p\in \mathbb{R}[t,t^{-1}]$ is a Laurent polynomial.  We say that 
$p$ is \emph{palindromic} if $p(t)=p(t^{-1})$. 
\end{definition}

\begin{lemma}\label{palder}
 Suppose $p\in\mathbb{R}[t,t^{-1}]$ is palindromic.  Then $p'(1)=0$.
\end{lemma}

\begin{proof}
  By palindromicity and the  chain rule,  $$p'(t)=\frac{d}{dt} p(t^{-1})=-p'(t^{-1})t^{-2}.$$  So $p'(1)=-p'(1)$.  Therefore, $p'(1)=0$.
\end{proof}

If $p$ is any Laurent polynomial, we think of $p'(1)$ as the \emph{center
  of mass} of $p$.

\begin{lemma}\label{l1}
  Suppose $q=\sum_{\ell\geq 0} p_{\ell}t^{\ell}$ where the 
  $p_{\ell}$ are palindromic Laurent polynomials with real
  coefficients.  Assume that, for $\ell>0$, the coefficients of the
  $p_{\ell}$ are non-negative.  Then
  \begin{equation}
    \label{qpal}
    q'(1)\geq 0.
  \end{equation}
  Moreover, the following are equivalent:
  \begin{enumerate}
  \item We have equality in \eqref{qpal}.
  \item $p_{\ell}=0$ for all $\ell>0$.
  \item $q$ is palindromic.
  \end{enumerate}

\end{lemma}

\begin{proof}
  Since the $p_{\ell}$ are all palindromic, we have $p'_{\ell}(1)=0$ for
  all $\ell\geq 0$.  Therefore
  \begin{equation}
  q'(1)=\sum_{\ell>0} \ell p_{\ell}(1).\label{qder}
  \end{equation}
  Since the coefficients of the $p_{\ell}$ are non-negative for $\ell>0$,
  \eqref{qder} implies \eqref{qpal}.  From \eqref{qder}, it is also immediate that
  (1) implies (2).   Then (2)$\Rightarrow$ (3) is obvious, and (3)$\Rightarrow$ (1) follows from Lemma~\ref{palder}.  
\end{proof}

\begin{theorem}
  \label{jbpal}
  Suppose $Y$ is a scheme of finite type over $\mathbb{C}$ and
  $K\in\Dbc Y$ is a semisimple complex.   Assume that 
\begin{equation}
    \label{lefis}
    \pH^{-i}K\cong\pH^{i}K
  \end{equation}
  for all $i\in\mathbb{Z}$.
  Fix $y\in Y(\mathbb{C})$ and $r\in\mathbb{Z}$, and
  suppose that
  \begin{equation}
    \label{jbhyp}
    H^j(\pH^iK)_y=0\text{ for $i,j\in\mathbb{Z}$ such that $j<r$.}
  \end{equation}
   Set
  \begin{equation}
    \label{eqt}
    q(t):=\sum_{k\in\mathbb{Z}} t^k\dim H^k(K[r])_y.
    \end{equation}
    Then $q'(1)\geq 0$.  Moreover, the following are equivalent:
    \begin{enumerate}
    \item $q'(1)=0$.
    \item We have 
    \begin{equation}
      \label{nohi}
      H^j(\pH^i K)_y=0\text{ for $j\neq r$.}
    \end{equation}
  \item $q$ is palindromic.
    \end{enumerate}
\end{theorem}
\begin{proof}
  Since $K$ is semisimple, we have
  \begin{equation}
    \label{ssgeo}
    K=\oplus (\pH^iK)[-i].
  \end{equation}
  Here we sum over $i\in\mathbb{Z}$, but,
  since this is clear from the context, we leave this out of summation
  notation to save clutter (as we will do in the rest of the proof as well).

  Substituting in \eqref{ssgeo} for $K$ in \eqref{eqt}, we get 
  \begin{align}
    q(t)&=\sum t^{k}\dim H^k(K[r])_y\nonumber\\
        &=\sum_{k\in\mathbb{Z}} t^k\sum_{i\in\mathbb{Z}} \dim H^k\left((\pH^iK)[r-i]\right)_y\nonumber\\
        &=\sum t^{k} \dim H^{k+r-i}(\pH^iK)_y\label{bnumber}\\
        &=\sum_{\ell,i\in\mathbb{Z}} t^{i+\ell} \dim H^{\ell+r}(\pH^iK)_y,\label{expnumber}     \\
    &=\sum_{\ell} t^{\ell}\sum_{i} t^i\dim H^{\ell+r}(\pH^iK)_y. 
  \end{align}
  where here we go from \eqref{bnumber} to \eqref{expnumber} by substituting
  $k=i+\ell$.  

  Now, for $\ell\in\mathbb{Z}$, set 
  \begin{equation}
    \label{epl}
    p_{\ell}:=\sum_{i\in\mathbb{Z}} t^i \dim H^{\ell+r}(\pH^iK)_y.
  \end{equation}
  By~\eqref{jbhyp}, $p_{\ell}=0$ for $\ell<0$. So, by~\ref{expnumber},
  $q=\sum_{\ell\geq 0} t^{\ell}p_{\ell}$.  But, by~\eqref{lefis}, each $p_{\ell}$
  is palindromic.  The theorem now follows from Lemma~\ref{l1}.  
\end{proof}

\begin{theorem}
  \label{t-pal-main}
  Suppose $f:X\arr Y$ is a projective morphism of reduced schemes of finite type over
  $\mathbb{C}$ with $Y$ equidimensional.  Let $y\in Y(\mathbb{C})$
  be a closed point and let $K$ be a semisimple complex
  in  $\Perv X$.  Set 
  \begin{equation}
    \label{e.Ktwo}
    q(t):=\sum_{k\in\mathbb{Z}} t^k  \dim \rH^{k-d_Y}(X_y,K).
  \end{equation}
  Then $q'(1)\geq 0$.   Moreover, the following are equivalent:
  \begin{enumerate}
  \item $q'(1)=0$.
  \item The local invariant cycle maps in~\eqref{gen.map} are isomorphisms
    for all $i$.
  \item $q$ is palindromic.  
  \end{enumerate}
\end{theorem}
\begin{proof}
  As in the proof of Theorem~\ref{big-lic}, we can (and do) assume
  that $Y$ is separated.  Then, by Kashiwara's conjecture, $Rf_*K$ is
  a semisimple complex and, for each $i$, we have an isomorphism
  $\pH^{-i}Rf_*K\cong \pH^iRf_*K$.  (This is the ``Hard Lefschetz''
  part of the Kashiwara conjecture, Item 2 in Drinfeld's
  statement~\cite{drinfeld-kashiwara}.) On the other hand, we have a
  canonical isomorphism
  $\rH^{k-d_Y}(X_y,K)=\rH^k(X_y,Rf_*K[-d_Y])=H^k(Rf_*K[-d_Y])_y$.
  Finally, for any complex $C$ on $Y$, we have $H^j(\pH^i C)=0$ for
  $j<-d_Y$ by Proposition~\ref{prop-van-comp}.  So, in particular,
  $H^j(\pH^i Rf_*K)_y=0$ for $j<-d_Y$.

  Now,  by Theorem~\ref{t-good-cond}
  we have   $H^j(\pH^iRf_*K)_y=0$ for all $j\neq -d_Y$ for all $j$
  if and only if the local invariant cycle maps are all isomorphisms. 
 So the theorem now follows from Theorem~\ref{jbpal}.  
\end{proof}

\begin{proof}[Proof of Theorem~\ref{t1}] Since $X$ is smooth and
  irreducible, $\mathbb{C}[d_X]$ is semisimple perverse.   So let
  $K=\mathbb{C}[d_X]$ in Theorem~\ref{t-pal-main}.
  Set 
$$q(t)=\sum_{i\in\mathbb{Z}} t^i\dim \rH^i(X_y,\mathbb{C}[d])=
  \sum_{i\in\mathbb{Z}} t^i\dim \rH^i(X_y,\mathbb{C}[d_x-d_Y]).$$
  Then $q$ is palindromic if and only if \eqref{e.t1} holds.
  So the result follows from Theorem~\ref{t-pal-main}.  
\end{proof}

\section{Galois covers}\label{gc}
The purpose of this section is to prove a lemma about the local
monodromy groups of Galois covers.  We use the concepts of
\S\ref{lmlfg}, but we have changed some of the notation (partially to avoid running out of capital letters
towards the end of the alphabet).

\subsection{Covers and monodromy}
Suppose $U$ is a smooth, connected, complex, quasi-projective variety
and $G$ is a finite group acting freely on $U$.  Let $V=U/G$, and write
$\pi:U\to V$ for the quotient morphism.
(It is well known that $U/G$ is a scheme. See, for example, 
\cite[\href{http://stacks.math.columbia.edu/tag/0725}{Tag 0725}]{stacks-project}.)

For each (closed) point $u\in U$, we
get a surjective group homomorphism
\begin{equation}
  \label{e.GaloisHomomorhpism}
  \psi_u: \pi_1(V,\pi(u))\twoheadrightarrow G.
\end{equation}
If $\gamma:[0,1]\to V$ represents an element of $\pi_1(V,\pi(u))$ and
$\tilde{\gamma}$ is a lift of $\gamma$ to $U$ with
$\tilde{\gamma}(0)=u$, then $\psi_u(\gamma)u=\tilde{\gamma}(1)$.  From
this description, we see that, if $\pi(u')=\pi(u)$, then $\psi_u$ and
$\psi_{u'}$ differ by conjugation by an element of $\pi_1(V,\pi(u))$.
(See~\cite{Hatcher} for a complete discussion of these matters.)

Now, suppose that $V$ is contained as a Zariski open subset of a smooth,
quasi-projective variety $Y$.  Set $Z=Y\setminus V$, and suppose $z$ is a 
closed point of~$Z$.  Let  $W$ be a good neighborhood of $z$ in $Y$ relative
to $Z$, and let $w$ be a point in $W\setminus Z$.  The choice of a path from 
$w$ to $\pi(u)$ gives us a 
sequence of group homomorphisms 
\begin{equation}\label{lmg}
\pi_1(W\setminus Z,w)\to \pi_1(V,\pi(u))\twoheadrightarrow G.
\end{equation}
Moreover, up to conjugation by an element of $G$, this map is
independent of $u$, the path from $w$ to $\pi(u)$, $W$ and $w$.  We
call the image $H(z)$ of the composition in~\eqref{lmg} the \emph{local
  monodromy subgroup at $z$}.  (The conjugacy class of $H(z)$ is independent
of any choices.)   Note that, if we replace $V$ with any
non-empty Zariski open subset $V'$ of $V$ containing $\pi(u)$ and we
replace $U$ with $\pi^{-1}(V')$, then $H(z)$ does not change.  This follows from 
Lemma~\ref{lsurj}.

\begin{proposition}\label{pgal}
  Let $G$ be a finite group acting on a smooth, quasi-projective
  variety $X$, and suppose $G$ acts freely on a Zariski dense open
  subset $U$ of $X$.  Suppose $\pi:X\to Y$ is the quotient of $X$ by
  $G$ and let $V=\pi(U)$.  Pick a closed point $x\in X\setminus U$, and suppose that $Y$
  is smooth.  Then $H(\pi(x))$ is the stabilizer $G_x$ of the point
  $x$.
\end{proposition}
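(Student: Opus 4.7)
The plan is to exhibit a $G_x$-invariant good neighborhood of $x$ in $X$ whose image in $Y$, restricted to $V$, realizes the local portion of the Galois cover as a cover with group $G_x$. The local monodromy group then falls out by standard covering-space theory.

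The first step, which will be the main technical obstacle, is to construct the desired neighborhood. Because $G_x$ is a finite group fixing the smooth point $x$, a standard averaging argument linearizes the $G_x$-action near $x$: there is a $G_x$-equivariant biholomorphism from an open neighborhood of $x$ onto a $G_x$-invariant open neighborhood of $0$ in $T_xX$, where $G_x$ acts linearly via its action on the tangent space. Equipping $T_xX$ with a $G_x$-invariant Hermitian form (obtained by averaging any form over $G_x$), the preimages of the Euclidean balls give a fundamental system of $G_x$-invariant open neighborhoods of $x$ in $X$. Appealing to the ball theorem at the end of Section~\ref{lmlfg}, I would arrange that these are good relative to $X\setminus U$. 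Finally, since the $G$-orbit of $x$ is finite and $X$ is Hausdorff, I would shrink once more to obtain a $G_x$-invariant good neighborhood $W'$ of $x$ satisfying $gW'\cap W'=\varnothing$ for every $g\in G\setminus G_x$.

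With $W'$ in hand, set $W:=\pi(W')\subseteq Y$. The separation property of $W'$ ensures that $\pi^{-1}(W)=\bigsqcup_{gG_x\in G/G_x} gW'$, so $\pi\colon W'\to W$ is exactly the quotient by $G_x$. Since $G_x$ acts freely on $U$, the restriction
\[
\pi|_{W'\cap U}\colon W'\cap U\longrightarrow W\cap V
\]
is a Galois covering with group $G_x$. Using the smoothness of $Y$, I would shrink $W'$ once more if necessary so that $W$ is itself a good neighborhood of $y=\pi(x)$ in $Y$ relative to $Z$ (or is contained in one whose local fundamental group of the complement of $Z$ can be identified with that of $W\cap V$ by Proposition~\ref{pgood}).

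To conclude, fix $u\in W'\cap U$ and set $w=\pi(u)$. The Galois cover $W'\cap U\to W\cap V$ yields a surjection $\pi_1(W\cap V,w)\twoheadrightarrow G_x$ which, composed with the inclusion $G_x\hookrightarrow G$, is precisely the local monodromy map in~\eqref{lmg}; this gives the inclusion $G_x\subseteq H(\pi(x))$. For the reverse inclusion, a loop $\gamma$ based at $w$ in $W\cap V$ lifts to a path $\tilde\gamma$ starting at $u$ in $\pi^{-1}(W\cap V)=\bigsqcup_g g(W'\cap U)$; by connectedness $\tilde\gamma$ remains in the component $W'\cap U$, so its endpoint lies in $\pi^{-1}(w)\cap W'=G_x\cdot u$, showing the monodromy element is in $G_x$. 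Combining the two inclusions gives $H(\pi(x))=G_x$. The remaining work is bookkeeping; the one genuinely nontrivial ingredient is the $G_x$-equivariant linearization in the first step.
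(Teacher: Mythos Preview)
Your argument is correct and lands on the same mechanism as the paper, but the two proofs are organized in opposite directions. The paper starts \emph{downstairs}: it fixes a good neighborhood $B$ of $y=\pi(x)$, sets $A=\pi^{-1}(B)$, and lets $A_x$ be the connected component of $A$ containing $x$. Then $G$ permutes the components of $A\cap U$ transitively, and the local monodromy group is the stabilizer of the component $A_x\cap U$; after shrinking $B$ this stabilizer is exactly $G_x$. You instead start \emph{upstairs}, manufacturing a $G_x$-invariant neighborhood $W'$ via Cartan linearization and a $G_x$-invariant metric, then pushing down to $W=\pi(W')$.

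The paper's route is shorter because it sidesteps all of your first paragraph. The linearization and invariant Hermitian form are unnecessary: given any open $N\ni x$, the connected component of $x$ in $\bigcap_{g\in G_x} gN$ is already a connected $G_x$-invariant neighborhood, and the separation condition $gW'\cap W'=\varnothing$ for $g\notin G_x$ is just Hausdorffness of $X$ together with finiteness of the orbit. Likewise, the goodness of $W'$ relative to $X\setminus U$ is never used in your argument, so the appeal to the ball theorem is a red herring. What you actually need is only that $W'$ is connected (so $W'\cap U$ is connected, making $W'\cap U\to W\cap V$ a genuine $G_x$-Galois cover) and that $W$ is small enough to sit inside some good neighborhood of $y$. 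Your parenthetical about Proposition~\ref{pgood} is pointing in the right direction but is imprecise: the clean statement is that for any open $W\ni y$ sandwiched between good neighborhoods $B'\subseteq W\subseteq B$, the image of $\pi_1(W\cap V)\to G$ equals $H(y)$, since it both contains the image from $B'$ and is contained in the image from $B$. With that observation your conclusion goes through without arranging $W$ itself to be good.
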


\begin{proof}
  Take a good neighborhood $B$ of $y:=\pi(x)$ with respect to $Z:=Y\setminus V$.
Pick $b\in B\cap V$ and set $A=\pi^{-1}B$.  Let $A_x$ denote the component of $A$ containing~$x$.   There exists $a\in A_x$ such that $\pi(a)=b$.  Let $H$ denote
the image of the composition $\pi_1(B\cap V,b)\to \pi_1(V,b)\twoheadrightarrow G$, where the last homomorphism is~$\psi_a$.
Then $H=H(a)$.   The group $G$ acts transitively
on the connected components of $A\cap U$, and the stabilizer of the
component of $A\cap U$ containing $a$ is $H$.  Since $A_x\cap U$ is connected, 
$A_x\cap U$ is this component.   So the stabilizer in $G$ of this component
is the same as the stabilizer of the $A_x$.  But, by possibly shrinking~$B$, we can arrange that this is just~$G_x$.  
\end{proof}

\begin{remark}
We use the assumption that $Y$ is smooth because we have only 
defined the local fundamental group in that case.   
However, since $Y$ is a quotient of a smooth variety, it is automatically
normal.  And good neighborhoods of normal quasi-projective varieties are
connected.  (See Chapter 3 of Mumford's~\cite{mumford-cpv}). It follows that the assumption that $Y$ is smooth 
can be dropped. 
\end{remark}

\section{Geometry of Hessenberg Schemes}\label{sghs}

In this section, we study the geometry of the family of Hessenberg
varieties over the space of regular matrices.  We also study the
family of maximal tori defined by centralizers of regular, semisimple
matrices.  Ng{\^o}'s paper on the Hitchin fibration~\cite{NgoHitchin}
significantly influenced our thinking about these matters, and we have consequently
borrowed Ng{\^o}'s notation.

\subsection{Regular matrices} Fix a positive integer $n$ and write
$\mathfrak{g}$ for the Lie algebra $\mathfrak{gl}_n$.  Recall that a
matrix $s\in\mathfrak{g}$ is regular if the Jordan blocks of $s$ have
distinct eigenvalues.  A matrix $s$ is regular if and only if its centralizer
has dimension $n$.  As in \S\ref{intro}, we say that $s$ is regular of type
$\lambda$ for a partition $\lambda$ of $n$ if the Jordan blocks of $s$
are of sizes $\lambda_1,\ldots \lambda_r$.  We write
$\mathfrak{g}^{\mathrm{r}}$ for the subset of regular matrices and
$\mathfrak{g}^{\mathrm{r}}_{\lambda}$ for the subset of regular
matrices of type $\lambda$.  We write $\grs$ for the 
subset of regular semisimple matrices.   This is a dense open 
subset of $\mathfrak{g}$. 

\subsection{Hessenberg schemes}\label{hs} Fix a Hessenberg function
$\mathbf{m}=(m_1,\ldots, m_{n-1})$ with $\mathbf{m}(i):=m_i$, and 
set $m_n=n$. 
 Write
$\mathscr{X}$ for the variety of complete flags in $\mathbb{C}^n$, and set
$$
\mbox{$\sH(\bm) :=\{(F,s)\in\mathscr{X}\times\mathfrak{g}: 
sF_i \subseteq F_{m_i}$ for $1\leq i\leq n\}$.}
$$
Note that the projection $\pr_1$ on the first factor makes $\sH(\mathbf{m})$
into a vector bundle of rank $\sum_{i=1}^n m_i$ over $\sX$.  So $\sH(\mathbf{m})$ is a smooth,
connected scheme
with 
\begin{equation}\label{dimhm}
  \dim\sH(\mathbf{m})=\dim\mathscr{X}+\sum_{i=1}^n m_i  =\frac{n(n-1)}{2}+\sum_{i=1}^n m_i.
\end{equation}

Let $\pi:\sH(\mathbf{m})\to \mathfrak{g}$ denote
the projection on the second factor.  Then the fiber of $\pi$ over a
matrix $s\in\mathfrak{g}$ is simply the Hessenberg variety
$\sH(\mathbf{m},s)$.  Note that $\pi$ is smooth over
$\grs$.

\begin{theorem}\label{t.flat}
  The map $\pi:\sH(\mathbf{m})\to \mathfrak{g}$ is flat over the locus $\gr$ of
  regular matrices.
\end{theorem}

\begin{proof}
  Both $\sH(\mathbf{m})$ and $\mathfrak{g}$ are smooth over $\mathbb{C}$, and,
  by Corollary~\ref{dimcor}, 
  all fibers of $\pi$ over $\gr$ have the same dimension, $|\mathbf{m}|$.
  It follows from the theorem which is sometimes called ``miracle flatness'' that the restriction
  of $\pi$ to the inverse image of $\gr$ is flat. (See~\cite[Ex. III.10.9]{hartshorne} for miracle flatness.)
\end{proof}

\subsection{Diagonal matrices and characteristics}  Write
$G:=\mathbf{GL}_n$ and write $\mathbf{D}$ for the diagonal subgroup of
$G$.  Write $\mathbf{d}$ for the Lie algebra of $\mathbf{D}$, and
$\mathbf{d}_r$ for the regular elements of $\mathbf{d}$. The symmetric
group $S_n$ acts on $\mathbf{d}=\mathbb{A}^n$ in the obvious way:
$\sigma(x_1,\ldots, x_n)=(x_{\sigma^{-1}(1)},\ldots,
x_{\sigma^{-1}(n)})$.  The quotient is the characteristic variety
$\car=\car_G=\mathbf{d}/S_n$.  We can view $\car$ as the
variety of monic polynomials $t^n+a_{n-1}t^{n-1}+\cdots a_0$ of
degree $n$.  The \emph{Chevalley morphism}   
$\chi:\mathfrak{g}\to \car$ is the morphism sending a matrix
$s\in\mathfrak{g}$ to its characteristic polynomial $\chi(s)$.

\subsection{The smallest Hessenberg scheme}\label{smallest}
Let $\Bell$ denote the Hessenberg function $\Bell(i)=i$.
Then the restriction $\sH^{\mathrm{rs}}(\Bell)\to \grs$ of
$\pi:\sH(\Bell)\to \mathfrak{g}$ to the inverse image of $\grs$
is an \'etale cover of degree~$n!$.  Since $\sH(\Bell)$ is
connected and smooth, so is $\sH^{\mathrm{rs}}(\Bell)$.  So,
since $\grs$ is also connected, $\sH^{\mathrm{rs}}(\Bell)\to
\grs$ is an \'etale cover corresponding to an index $n!$ subgroup of
the fundamental group of $\grs$.

We call $\sH(\Bell)$ 
the smallest Hessenberg scheme because, for any Hessenberg function
$\mathbf{m}$, there is a canonical inclusion $\sH(\Bell)\to
\sH(\mathbf{m})$ (which is a closed immersion).  

We have a morphism $\tilde\chi:\sH(\Bell)\to \mathbf{d}$
sending a pair $(s,F)$ to the diagonal matrix with $\Gr^F_i s$ in the
$(i,i)$-entry.  This gives rise to a commutative diagram
\begin{equation}\label{gsr}
\xymatrix{
\sH(\Bell)\ar[r]^{\tilde\chi}\ar[d]^{\pi}   & \mathbf{d}\ar[d]^{\chi_{|\mathbf{d}}}\\
\mathfrak{g}\ar[r]^{\chi}             & \car,
}
\end{equation}
which coincides with Grothendieck's simultaneous resolution
of $\chi$. (For a discussion of Grothendieck's resolution, see Springer~\cite[\S 4.1]{springer} or Slodowy~\cite[\S 4.7]{slodowy}.) 
The restriction of ~\eqref{gsr} to the inverse image of
$\car^{\mathrm{rs}}:=\dr/S_n$ is a pullback
diagram.  In other words,
$\sH^{\mathrm{rs}}(\Bell)=\grs\times_{\car^{\mathrm{rs}}}
\dr$.  This shows that
$\sH^{\mathrm{rs}}(\Bell)$ is a (connected) Galois cover of
$\grs$ with Galois group $S_n$.

We can describe this Galois covering a little bit more explicitly if
we introduce the closed subscheme $Z$ of
$(\mathbb{P}^{n-1})^n\times\grs$ consisting of ordered tuples
$([v_1],\ldots, [v_n]; y)$ where the $v_i$ form a basis of
eigenvectors of $y$.  Given a point $z=([v_1],\ldots, [v_n]; y)$ in
$Z$, we can define a complete flag $F(z)$ by setting $F_i=\langle
v_1,\ldots, v_i\rangle$.  This defines a morphisms $Z\to
\sH^{\mathrm{rs}}(\Bell)$ given by $z\mapsto (F(z),y)$.  Using the fact
that $Z$ and $\sH^{\mathrm{rs}}(\Bell)$ are both \'etale covers of
$\grs$ of the same degree, it is easy to see that $Z\to
\sH^{\mathrm{rs}}(\Bell)$ is an isomorphism.  Then $S_n$ acts on $Z$ by
permuting the $v_i$: $\sigma([v_1],\ldots, [v_n];
y)=([v_{\sigma^{-1}(1)}],\ldots , [v_{\sigma^{-1}(n)}], y)$.  It is
easy to see that
the map $\tilde{\chi}:\sH^{\mathrm{rs}}(\Bell)\to \dr$
is $S_n$-equivariant.

\subsection{The fundamental groups}  
Suppose $z=(F,y)\in\sH^{\mathrm{rs}}(\Bell)$.
We get a surjection
$\psi_z:\pi_1(\grs,y)\twoheadrightarrow S_n$ corresponding
to the Galois covering $\sH^{\mathrm{rs}}(\Bell)\to \grs$ with Galois group $S_n$.   Similarly, for a regular diagonal matrix $u\in\mathbf{d}_r$, we have a surjection $\psi_u:\pi_1(\car^{\mathrm{rs}},\chi(u))\to S_n$.
Since
$$
\xymatrix{
\sH^{\mathrm{rs}}(\Bell)\ar[r]^{\tilde\chi}\ar[d] & \dr\ar[d]^{\chi_{|\dr}}\\
\grs\ar[r]^{\chi}             & \car^{\mathrm{rs}}.
}
$$
is a pullback diagram of Galois \'etale covers with Galois group $S_n$,
we have $\psi_z=\psi_{\tilde{\chi}(z)}\circ\chi_*$.

\begin{definition}
  We say a polynomial $p\in\car$ is of type $\lambda$ if $p=\prod_{i=1}^{\ell} 
(x-x_i)^{\lambda_i}$ where $x_1,\ldots , x_{\ell}$ are distinct.  
\end{definition}

\begin{lemma}\label{myoung}
  Suppose
  $p=\prod_{i=1}^{\ell} (x-x_i)^{\lambda_i}$
is a polynomial of type $\lambda$.
Then the local monodromy subgroup $H(p)$ of $S_n$ at $p$ for the $S_n$-cover
$\mathbf{d}_{\mathbf{r}}\to\carss$ is $S_{\lambda}$. 
\end{lemma}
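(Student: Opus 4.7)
The plan is to apply Proposition~\ref{pgal} directly to the $S_n$-cover $\mathbf{d}_r \to \carss$. Take $X = \mathbf{d}$, $G = S_n$ acting by permuting diagonal entries, and $U = \mathbf{d}_r$, which is a Zariski dense open subset on which the action is free (since the entries of a regular diagonal matrix are distinct). The quotient $Y = \car$ is the affine space of monic polynomials of degree $n$, hence smooth; $V = \carss$; and the quotient morphism is the restriction of $\chi$ to $\mathbf{d}$. All hypotheses of Proposition~\ref{pgal} are satisfied.

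To identify $H(p)$ with $S_\lambda$ on the nose (rather than just up to conjugacy), I would choose a particular preimage of $p$ under $\chi|_\mathbf{d}$. Namely, let $D$ be the diagonal matrix whose $(i,i)$-entries are $x_1$ in the first $\lambda_1$ positions, $x_2$ in the next $\lambda_2$ positions, and so on, so that the blocks of equal eigenvalues occupy the intervals corresponding to the parts of $\lambda$. Then $\chi(D) = p$, and a short computation shows that the stabilizer $(S_n)_D$ is exactly the Young subgroup $S_\lambda = S_{\lambda_1} \times \cdots \times S_{\lambda_r}$, since a permutation fixes $D$ iff it permutes positions only within a single eigenvalue block.

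With this setup, Proposition~\ref{pgal} gives $H(p) = H(\chi(D)) = (S_n)_D = S_\lambda$, which is exactly the claim. The mild subtlety to address is that the local monodromy subgroup is only canonically defined up to conjugation by elements of $S_n$, and different preimages of $p$ give conjugate stabilizers; the statement of the lemma fixes the conjugacy class by exhibiting the specific representative $S_\lambda$, and this is realized by choosing the preimage $D$ above. I do not foresee a significant obstacle: once the setup is matched to Proposition~\ref{pgal}, the rest is a stabilizer computation and an appeal to the smoothness of $\car$.
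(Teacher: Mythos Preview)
Your proposal is correct and essentially identical to the paper's own proof: the paper also chooses the diagonal matrix $\tau = \diag(x_1,\ldots,x_1,x_2,\ldots,x_2,\ldots,x_r,\ldots,x_r)$ as a preimage of $p$, observes that its stabilizer in $S_n$ is $S_\lambda$, and then invokes Proposition~\ref{pgal}. Your write-up is in fact more careful than the paper's in checking the hypotheses of Proposition~\ref{pgal}.
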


\begin{proof}
Let $\tau$ denote the diagonal matrix
\[\diag(x_1,\ldots, x_1,x_2\ldots, x_2,\ldots, x_r,\ldots, x_r).\]
Then the stabilizer in $S_n$ of $\tau$ is precisely
$S_{\lambda}$.   The results then follows from Proposition~\ref{pgal}.
\end{proof}

\subsection{The Kostant section} The Kostant section is a morphism
$\epsilon:\car\to \mathfrak{g}^{\mathrm{r}}$ which is a section of $\chi$;
i.e., $\chi\circ\epsilon=\id$. 
 We give the definition of $\epsilon$
following Ng\^o's paper~\cite[Theorem 2.1]{NgoHitchin}.
We remark, however, that, while the general definition makes sense for
any reductive Lie algebra, we only discuss it for $\mathfrak{gl}_n$.  

Let $\mathbf{x}_{-}$ (resp.\ $\mathbf{x}_{+}$) denote the $n\times n$
matrix with $1$'s just below (resp.\ just above) the diagonal and $0$'s
everywhere else.  Then let $\mathfrak{g}^{\mathbf{x}_+}$ denote the
centralizer of $\mathbf{x}_+$ in $\mathfrak{g}$.  In~\cite{Kostant63},
Kostant showed that the subspace
$\mathbf{x}_{-}+\mathfrak{g}^{\mathbf{x}_{+}}$ is contained in
$\mathfrak{g}^{\mathrm{r}}$.  Moreover, he showed that the restriction of $\chi$
to $\mathbf{x}_{-}+\mathfrak{g}^{\mathbf{x}_+}$ induces an isomorphism
onto $\car$.  The Kostant section is the inverse morphism $\epsilon:
\car\to\mathbf{x}_{-}+\mathfrak{g}^{\mathbf{x}_+}$.  In the case of
$\mathfrak{gl}_n$,
\[
\mathbf{x}_{-}+\mathfrak{g}^{\mathbf{x}_+}=
\left\{
\begin{pmatrix}
x_0 & x_1 & x_2 & \ldots & x_{n-2} & x_{n-1} \\
1 & x_0   & x_1 & \ldots &  x_{n-3}& x_{n-2} \\
0 & 1   & x_0   & \ldots & x_{n-4} & x_{n-3} \\
\vdots & \vdots & \vdots &\ddots &\vdots & \vdots \\
0      & \ldots &           & \ldots & 1 & x_0\\
\end{pmatrix}\right\}.
\]
From this, it is elementary to compute the Kostant section. For example, 
for $n=2$, it sends the characteristic polynomial $p=x^2+a_1x+a_0$ to 
the matrix of the form above with $x_0=-a_1/2$, $x_1=a_1^2/4-a_0$.

\begin{proposition}\label{ryoung}
  Suppose $s\in\mathfrak{g}^r$ is a regular matrix of type $\lambda$.
Then the local monodromy $H(s)$ at $s$ for the $S_n$-cover 
$\sHrs(\Bell)\to\grs$  
is conjugate to the Young subgroup $S_{\lambda}$.  
\end{proposition}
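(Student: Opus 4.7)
The plan is to leverage Lemma~\ref{myoung}, which identifies the local monodromy of the Galois $S_n$-cover $\mathbf{d}_{\mathrm{r}}\to\carss$ at a polynomial of type~$\lambda$ as the Young subgroup~$S_\lambda$, and to transfer that computation from $\carss$ to $\grs$ using the Kostant section together with Proposition~\ref{psec}.

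First, I would reduce to the case $s=\epsilon(p)$, where $p:=\chi(s)$. Since any two regular matrices with the same characteristic polynomial are conjugate under $G=\mathbf{GL}_n$, there exists $g\in G$ with $g\epsilon(p)g^{-1}=s$. Conjugation by~$g$ is a self-homeomorphism of $\grs$ sending $\epsilon(p)$ to~$s$, and it lifts to $\sHrs(\Bell)$ by $(F,y)\mapsto(gF,gyg^{-1})$. In the description from~\S\ref{smallest} via the variety~$Z$, the lifted $G$-action permutes eigenvectors linearly while the Galois $S_n$-action permutes their labels, so the two actions commute. Hence $\mathrm{Ad}(g)$ is an $S_n$-equivariant automorphism of the cover that carries $H(\epsilon(p))$ onto $H(s)$, so these two subgroups of $S_n$ are conjugate. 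It therefore suffices to show that $H(\epsilon(p))$ is conjugate to $S_\lambda$.

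Next, I would apply Proposition~\ref{psec} to $\chi:\mathfrak{g}\to\car$ with its Kostant section $\epsilon:\car\to\mathfrak{g}$, taking $Y=\car\setminus\carss$. Both $\mathfrak{g}$ and $\car$ are smooth (affine spaces), $Y$ is the discriminant hypersurface (closed, nowhere dense, and analytic), and $\chi^{-1}(Y)=\mathfrak{g}\setminus\grs$ since a matrix is regular semisimple if and only if its characteristic polynomial is squarefree. Proposition~\ref{psec} then says that the local fundamental group of $\carss$ at~$p$ is a retract of the local fundamental group of $\grs$ at $\epsilon(p)$; in particular, the map $\chi_*$ induced on local fundamental groups is surjective.

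Finally, I would combine this surjectivity with the factorization $\psi_z=\psi_{\tilde\chi(z)}\circ\chi_*$ recorded in~\S\ref{smallest}, which expresses the fact that $\sHrs(\Bell)\to\grs$ is pulled back from $\mathbf{d}_{\mathrm{r}}\to\carss$. Restricting to local loops around $\epsilon(p)$ and pushing forward via $\chi$, the surjectivity from Step~2 shows that the image of the composition in $S_n$ equals the image of $\psi$ on the entire local fundamental group of $\carss$ at~$p$, which is $H(p)=S_\lambda$ by Lemma~\ref{myoung}. Combined with Step~1, this proves $H(s)$ is conjugate to~$S_\lambda$. The main obstacle is Step~2: it is precisely the existence of the Kostant section that enables a topological retraction between local fundamental groups of $\grs$ and $\carss$. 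Once this is in hand, Step~1 is a routine verification that the $G$-action and Galois $S_n$-action commute, and Step~3 is formal bookkeeping via the pullback of covers.
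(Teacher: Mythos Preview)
Your proposal is correct and follows essentially the same approach as the paper's proof: reduce to $s=\epsilon(p)$ on the Kostant section, apply Proposition~\ref{psec} to get the retract of local fundamental groups, use the pullback description of the cover to transfer the monodromy computation to $\carss$, and invoke Lemma~\ref{myoung}. Your Step~1 spells out via $G$-conjugacy what the paper leaves implicit in the phrase ``We can assume that $s=\epsilon(p)$,'' but otherwise the argument is the same.
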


\begin{proof} 
  We can assume that $s=\epsilon(p)$ for some $p\in\car$.  Then, by
  Proposition~\ref{psec}, the local fundamental group at $p$ is a
  retract of the local fundamental group at~$s$.  Since the
  $S_n$-cover $\sHrs(\Bell)\to\grs$ is a pullback of the $S_n$-cover
  $\mathbf{d}^{r}\to\carss$, it follows that the local monodromy
  subgroup at $s$ is equal to the local monodromy subgroup at $p$.  By
  Lemma~\ref{myoung}, this subgroup is $S_{\lambda}$.
\end{proof}

\begin{remark}
We could have used any (continuous) section of the map $\chi:\mathfrak{g}\to \car$ to prove Proposition~\ref{ryoung}.
\end{remark}

\subsection{The commuting group scheme}\label{cgs}
Write $I:=\{(g,x)\in G\times\mathfrak{g}: \Ad g (x)=x\}$.  The projection
$p:I\to\mathfrak{g}$ is a group scheme in a more or less obvious way.  Write $p^{\mathrm{rs}}:\mathscr{T}\to
\grs$ for the restriction of~$p$ to the inverse image of
$\grs$.  Then $\mathscr{T}$ is a torus bundle: the fiber over a point
$y\in\grs$ is the maximal torus in $G$ centralizing $y$.

Identify the scheme $Z$ from \S\ref{smallest} with $\sHrs(\Bell)$ and
form a pullback diagram
$$
\xymatrix{
  \mathscr{T}_Z\ar[r]\ar[d] & \mathscr{T}\ar[d]^{p^{\mathrm{rs}}}\\
       Z\ar[r]^{\pi}   & \grs.\\
}
$$
Then $\mathscr{T}_Z$ is equipped with an isomorphism
$\mathscr{T}_Z\to \mathbb{G}_{mZ}^{n}$ to the split torus over
$Z$.  To see this, suppose $z=([v_1],\ldots, [v_n];y)$ and $g\in G$ is
an element commuting with $y$.  Then $g$ preserves the eigenspaces of
$y$.  So for each $i=1,\ldots n$, there is a unique character
$t_i\in X^*(\mathscr{T}_Z)$ such that $gv_i=t_i(g)v_i$.  The 
$n$-tuple of characters $\vec{t}:=(t_1,\ldots, t_n):\mathscr{T}_Z\to \mathbb{G}_{m,Z}^{n}$ is easily seen to give an isomorphism. 

  Over $\mathfrak{g}^{rs}$, the torus $\mathscr{T}$ is determined
  up to isomorphism by its group of characters $X^*(\mathscr{T})$
  viewed as a $\mathbb{Z}$-local system over $\grs$.
  Moreover, this local system is canonically isomorphic to
  $R^1p^{\mathrm{rs}}_*\mathbb{Z}$.  For any point
  $y\in\grs$, the fundamental group
  $\pi_1(\grs,y)$ acts on the fiber of
  $X^*(\mathscr{T})$ lying over $y$ by permuting the characters
  $t_1,\ldots, t_n$.

\section{Monodromy and Tymoczko's dot action}\label{mtd}

\subsection{Fiberwise cohomology of $B\mathscr{T}$}
\label{fcbt}  

For each $y\in\grs(\mathbb{C})$, we have a torus $\mathscr{T}_y$ and
its associated classifying space $B\mathscr{T}_y$.  The cohomology of
$B\mathscr{T}_y$ is naturally a polynomial ring
$\mathbb{C}[t_1,\ldots, t_n]=
\mathbb{C}[X^*(\mathscr{T}_y)]$ generated in degree $2$ by the
characters of $\mathscr{T}_y$.  As we vary $y$, these glue together to
form a local system $\mathscr{A}$ of polynomial algebras
over $\grs$.  In fact, since $\mathscr{T}$ is \'etale locally trivial,
we can
construct a fiber bundle $a:B\mathscr{T}\to \grs$ over $\grs$ such that 
the fiber over each $y\in\grs$ is $B\mathscr{T}_y$.  
Then we have $\mathscr{A}=\oplus_{k\geq 0}\mathscr{A}^k$ 
where $\mathscr{A}^k=R^{2k}a_*\mathbb{C}$.

Let $y_0$ denote the regular
semisimple matrices with diagonal entries $1$, $2$, $3,\ldots, n$
(written in order).  Let $F^0$ denote the standard flag
$$\langle e_1\rangle \subset \langle e_1,e_2\rangle \subset \cdots$$
where $e_i$ is the standard basis of $\mathbb{C}^n$.  Set $z_0=(F^0,y_0)\in
\sH(\Bell,y_0)$.  This gives rise to a surjection
\begin{equation}\label{psifixed}
\psi:\pi_1(\grs,y_0)\to S_n,
\end{equation}
where, for simplicity, we write $\psi:=\psi_{z_0}$.  

Let $T$ denote the fiber of $\mathscr{T}$ over $y_0$.  So $T$ is simply
the diagonal subgroup of $G$.  Then, by the discussion in \S\ref{cgs}, $\pi_1(\grs,y_0)$ acts on $X^*(T)$ 
by permuting the characters.   Explicitly, if we let $\sigma(t_i)=
t_{\sigma^{-1}(i)}$ for $\sigma\in S_n$, then $\gamma(t_i)=
(\psi(\gamma))(t_i)$.  
Consequently, if we let $A=\mathscr{A}_{y_0}=\mathbb{C}[t_1,\ldots, t_n]$, then $\pi_1(\grs,y_0)$ acts on the polynomials in $A$ by 
$\gamma (p)=(\psi(\gamma))(p)$, where $S_n$ acts on $A$ in the standard way:
\begin{equation}\label{fcbtf}
(\sigma p)(t_1,\ldots, t_n)=p(t_{\sigma(1)},\ldots, t_{\sigma(n)}).
\end{equation}

\subsection{Fiberwise equivariant cohomology of Hessenberg varieties} Now, for each Hessenberg
function $\mathbf{m}$, the torus $\mathscr{T}$ acts on the Hessenberg
scheme $\sHrs(\mathbf{m})\to \grs$.  So for each $y\in\grs$, we can
take the equivariant cohomology groups
$\HH^*_{\mathscr{T}_y}(\sH(\mathbf{m}),y)$ (with complex coefficients).  By localization, we know
that $\HH^*_{\mathscr{T}_y}(\sH(\mathbf{m}),y))$ is a free module of
rank $n!$ over $\mathscr{A}_y= \HH^*(B\mathscr{T}_y)$.  Moreover, the
canonical inclusion $\sH(\Bell)\to\sH(\mathbf{m})$ induces an
inclusion $\HH^*_{\mathscr{T}_y}(\sH(\mathbf{m},y))\to
\HH^*_{\mathscr{T}_y}(\sH(\Bell,y))$.  (See Tymoczko's
paper~\cite{tymoczko} for results on localization applied to Hessenberg
varieties.)  The modules $\HH^*_{\mathscr{T}_y}(\sH(\mathbf{m},y))$ glue
together to form a local system $\mathscr{L}(\mathbf{m})$ over $\grs$ of
$\mathscr{A}$-modules.  This can be seen explicitly using 
Tymoczko's description of the equivariant cohomology of Hessenberg varieties
in terms of moment graphs. 

\begin{proposition}\label{pgkm}  Write $\pi_{\mathbf{m}}:\sHrs(\mathbf{m})\to\grs$
for the projection morphism, and let 
$\mathscr{A}_+$ denote the sheaf of ideals in $\mathscr{A}$ generated
by the positive degree elements.   Then we have an isomorphism of sheaves
$$
\mathscr{L}(\mathbf{m})/\mathscr{L}(\mathbf{m})\mathscr{A}_+\to
R^*\pi_{\mathbf{m}*}\mathbb{C}.
$$  
\end{proposition}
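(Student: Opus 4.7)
The plan is to verify the map is an isomorphism on stalks, where it becomes the classical statement that equivariant cohomology modulo the augmentation ideal recovers ordinary cohomology for an equivariantly formal action.

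First I would fix the natural map. The fiberwise Borel construction $\sHrs(\bm)\times_{\mathscr{T}}E\mathscr{T}\to \grs$ sits over the bundle $a\colon B\mathscr{T}\to\grs$, and pushing to $\grs$ yields a Leray-type spectral sequence fiberwise in $y\in\grs$ whose $E_2$-page is $\mathscr{A}\otimes R^\ast\pi_{\bm*}\mathbb{C}$ and whose abutment is $\mathscr{L}(\bm)$. The ring map $\mathscr{A}\to\mathscr{L}(\bm)$ comes from $a$, and the inclusion of a fiber induces the desired map $\mathscr{L}(\bm)/\mathscr{L}(\bm)\mathscr{A}_+\to R^\ast\pi_{\bm*}\mathbb{C}$.

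Next I would check that both sides are local systems on $\grs$, so that being an isomorphism may be verified on a single stalk. For the right-hand side this is Ehresmann's theorem applied to the smooth projective morphism $\pi_{\bm}$ over $\grs$. For the left-hand side, one way is to use Tymoczko's GKM description of $H^\ast_{\mathscr{T}_y}(\sH(\bm,y))$: as $y$ varies in $\grs$, the fixed point set is locally constant (it is a trivial torsor over $\sHrs(\Bell)\to\grs$ on suitable opens), and the defining divisibility relations vary continuously in the characters, so $\mathscr{L}(\bm)$ is a local system of $\mathscr{A}$-modules.

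Then the core step is pointwise verification. Fix $y\in\grs$ and set $X=\sH(\bm,y)$, $T=\mathscr{T}_y$. By Theorem~\ref{thm:tymoczko} (or De~Mari--Procesi--Shayman), $X$ is paved by affines, so $H^{\mathrm{odd}}(X,\mathbb{C})=0$; combined with the $T$-action, this makes $X$ equivariantly formal, i.e.\ the Leray spectral sequence for $X\to X\times_T ET\to BT$ degenerates at $E_2$. Thus $H^\ast_T(X)$ is free over $H^\ast(BT)=\mathscr{A}_y$ of rank $\dim H^\ast(X)$, and the edge map $H^\ast_T(X)/\mathscr{A}_{y,+}\cdot H^\ast_T(X)\to H^\ast(X)$ is an isomorphism. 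This is exactly the stalk of our map at $y$.

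The main obstacle is the local-system statement for $\mathscr{L}(\bm)$: making precise that Tymoczko's fiberwise equivariant cohomology truly assembles into a local system, and not merely into something whose stalks agree abstractly. Once that is in hand, the proposition follows from equivariant formality. An alternative route, avoiding GKM bookkeeping, is to note that the Leray spectral sequence of the Borel construction is a spectral sequence of sheaves on $\grs$; fiberwise degeneration (by equivariant formality of every fiber) forces degeneration as sheaves, giving $\mathscr{L}(\bm)\cong\mathscr{A}\otimes_{\mathbb{C}} R^\ast\pi_{\bm*}\mathbb{C}$ as $\mathscr{A}$-modules, and quotienting by $\mathscr{A}_+$ yields the claim.
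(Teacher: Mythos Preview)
Your proposal is correct and follows essentially the same approach as the paper: the paper's proof is the single sentence ``This follows from the fact that Hessenberg varieties are GKM spaces,'' with a reference to Tymoczko, and what you have written is precisely the content behind that citation (equivariant formality from the affine paving, freeness over $\mathscr{A}_y$, and the resulting isomorphism $H^*_T(X)/\mathscr{A}_{y,+}H^*_T(X)\cong H^*(X)$, checked stalkwise). Your additional care about why $\mathscr{L}(\bm)$ is genuinely a local system is exactly the point the paper sweeps into the phrase ``glue together'' just before the proposition.
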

\begin{proof}
This follows from the fact that Hessenberg varieties are GKM spaces.
(See~\cite[\S2 and Proposition 5.4]{tymoczko}).
\end{proof}

For each $\mathbf{m}$, localization induces an inclusion
$\mathscr{L}(\mathbf{m})\to \mathscr{L}(\Bell)$ of
$\mathscr{A}$-modules.  Write $L(\mathbf{m})$ for the fiber, $\HH^*_T(\sH(\mathbf{m},y_0))$ of
$\mathscr{L}(\mathbf{m})$ over $y_0$.  Then $L(\mathbf{m})$ is free as an
$A$-module, and both $A$ and $L(\mathbf{m})$ are equipped with
compatible actions of $\pi_1(\grs,y_0)$.  If we write $A_+$ for the
ideal of positive degree polynomials, then we have
\begin{equation}
  \HH^*(\sH(\mathbf{m},y_0))=L/A_+L(\mathbf{m}),
\end{equation}
and the monodromy action of $\pi_1(\grs,y_0)$ on both sides is compatible. 

\begin{proposition}\label{snm}
  The action of $\pi_1(\grs,y_0)$ on $L(\mathbf{m})$ factors through
the homomorphism $\psi:\pi_1(\grs,y_0)\twoheadrightarrow S_n$.  
\end{proposition}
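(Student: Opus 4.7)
The plan is to show that $\pi^*\mathscr{L}(\mathbf{m})$ is a trivial local system on $Z:=\sHrs(\Bell)$, where $\pi:Z\to\grs$ is the Galois $S_n$-cover of \S\ref{smallest}. By covering space theory this is equivalent to asserting that the monodromy representation $\pi_1(\grs,y_0)\to\mathrm{Aut}(L(\mathbf{m}))$ is trivial on the image of $\pi_1(Z,z_0)$ in $\pi_1(\grs,y_0)$, which is precisely $\ker\psi$; equivalently, that it factors through $\psi:\pi_1(\grs,y_0)\twoheadrightarrow S_n$, as claimed.

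To establish triviality of $\pi^*\mathscr{L}(\mathbf{m})$, I would exploit the fact from \S\ref{cgs} that the tautological ordered eigenbasis over $Z$ trivializes the pullback $\pi^*\mathscr{T}$ as the split torus $\mathbb{G}_m^n\times Z$. This trivialization simultaneously does two things: it identifies $\pi^*\mathscr{A}$ with the constant sheaf of algebras with fiber $A=\mathbb{C}[t_1,\ldots,t_n]$, and it identifies the $\mathscr{T}$-fixed locus of the pulled-back family $\sHrs(\mathbf{m})\times_{\grs}Z\to Z$ with the constant scheme $S_n\times Z$. (A $\mathscr{T}_y$-fixed flag in $\sH(\mathbf{m},y)$ must be assembled from a reordering of the eigenvectors of $y$, and over $Z$ such reorderings are canonically labeled by $S_n$.)

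With these trivializations in hand, Tymoczko's GKM/moment graph description (cf.\ Proposition~\ref{pgkm} and \cite{tymoczko}) realizes $\pi^*\mathscr{L}(\mathbf{m})$ as the constant subsheaf of $\prod_{w\in S_n}A$ cut out by divisibility relations indexed by the edges of the moment graph of $\sH(\mathbf{m})$. These relations depend only on $\mathbf{m}$ and the root data of the standard diagonal torus, not on the base point of $Z$. Hence the subsheaf is visibly the constant local system with fiber $L(\mathbf{m})$, and the argument of the previous paragraph concludes.

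The main obstacle is the passage from Tymoczko's fiberwise GKM presentation of $H^*_{\mathscr{T}_y}(\sH(\mathbf{m},y))$ to a global identification of local systems over $Z$: one must check that the moment graph description varies algebraically with $y$ and intertwines correctly with the trivializations above. This amounts to a sheafified version of the localization theorem employed in~\cite{tymoczko}, and should be routine once the fixed-point scheme and character sheaf are both made constant as described. The only subtle point is to verify that the image of $\pi^*\mathscr{L}(\mathbf{m})$ inside $\prod_{w\in S_n}\pi^*\mathscr{A}$ under localization is actually the GKM subring, globally over $Z$, rather than just at each fiber.
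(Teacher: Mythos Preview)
Your approach is correct and shares the same opening moves as the paper's proof: trivialize $\pi^*\mathscr{T}$ over $Z$ and identify the fixed-point scheme with the constant scheme $S_n\times Z$. The difference lies in how you finish. You argue directly that $\pi^*\mathscr{L}(\mathbf{m})$ is constant by invoking a sheafified GKM description, and you correctly flag the one nontrivial point this requires, namely that the moment-graph relations are constant over~$Z$. The paper sidesteps this obstacle entirely: once the torus and fixed points are trivialized over~$Z$, it is immediate that $\pi^*\mathscr{L}(\Bell)$ is constant (since $\sH(\Bell,y)$ is finite and its equivariant cohomology is just $\prod_{w\in S_n}A$). Then the localization inclusion $\mathscr{L}(\mathbf{m})\hookrightarrow\mathscr{L}(\Bell)$, already recorded before the proposition, gives a $\pi_1(\grs,y_0)$-equivariant inclusion $L(\mathbf{m})\hookrightarrow L(\Bell)$; since $\ker\psi$ acts trivially on the larger module, it acts trivially on the submodule. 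What the paper's route buys is that you never need to globalize the GKM presentation or verify that the divisibility conditions vary trivially---the inclusion into $L(\Bell)$ does all the work. Your route is more self-contained (it does not rely on the localization inclusion being set up beforehand) but pays for this with the extra verification you identify.
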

\begin{proof}
  The pullback $\mathscr{T}_Z$ of $\mathscr{T}$ to the $S_n$-cover
  $Z\to\grs$ is a constant group scheme, and the pullback of
  $\sHrs(\Bell)=Z$ to $Z$ is disjoint union of copies of $Z$
  indexed by elements of $S_n$.  It follows that the action of
  $\pi_1(\grs,y_0)$ on $\HH^*_T(\sH(\Bell,y_0))$ is trivial on the
  image of the map $\pi_1(Z,z_0)\to \pi_1(\grs,y_0)$.  In other words,
  the action of $\pi_1(\grs,y_0)$ on $\HH^*_T(\sH(\Bell,y_0))$
  factors through $S_n$.

Since $\mathscr{L}(\mathbf{m})\to\mathscr{L}(\Bell)$ is an inclusion of local systems,
we have a $\pi_1(\grs,y_0)$-equivariant inclusion $L(\mathbf{m})\to L(\Bell)$.  The result follows. 
\end{proof}

\begin{corollary}\label{cmac}
  The action of $\pi_1(\grs,y_0)$ on $\HH^*(\sHrs(\mathbf{m},y_0))$ 
induced from the local system $R^*\pi_{\mathbf{m}*}\mathbb{C}$
factors through $\psi:\pi_1(\grs,y_0)\twoheadrightarrow S_n$. 
\end{corollary}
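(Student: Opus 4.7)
The plan is to deduce Corollary~\ref{cmac} as a quick consequence of Proposition~\ref{snm} combined with Proposition~\ref{pgkm}. The key observation is that the ordinary cohomology $H^*(\sH(\mathbf{m},y_0))$ is a quotient of the equivariant cohomology $L(\mathbf{m}) = H^*_T(\sH(\mathbf{m},y_0))$ by the ideal generated by positive-degree elements of $A$, and both the action of $\pi_1(\grs,y_0)$ on $L(\mathbf{m})$ and its action on $A$ are already known to factor through $\psi$.

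More precisely, I would first apply Proposition~\ref{pgkm} fiberwise at $y_0$ to obtain a $\pi_1(\grs,y_0)$-equivariant isomorphism
\[
H^*(\sH(\mathbf{m},y_0)) \;\cong\; L(\mathbf{m})/A_+ L(\mathbf{m}),
\]
where the monodromy action on the right is the one induced from the action on $L(\mathbf{m})$ and on $A$. Proposition~\ref{snm} tells us that the $\pi_1(\grs,y_0)$-action on $L(\mathbf{m})$ factors through $\psi:\pi_1(\grs,y_0) \twoheadrightarrow S_n$. Moreover, the description of the action on $A$ recalled in equation~(\ref{fcbtf}) of \S\ref{fcbt} shows that the action of any loop $\gamma$ on a polynomial $p \in A$ is given by $\gamma(p) = (\psi(\gamma))(p)$, so this action also factors through $\psi$.

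Since both factors in the quotient $L(\mathbf{m})/A_+L(\mathbf{m})$ carry actions that factor through $S_n$, and the quotient is formed using the compatible $A$-module structure, the induced action on the quotient factors through $\psi$ as well. This gives the corollary.

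I do not expect any serious obstacle here; the corollary is essentially a formal consequence of Proposition~\ref{snm} together with the identification of ordinary cohomology as the quotient of equivariant cohomology by positive-degree equivariant parameters. The only point worth spelling out carefully is that the isomorphism in Proposition~\ref{pgkm} is an isomorphism of sheaves (and hence intertwines the monodromy representations), so the resulting action on $H^*(\sH(\mathbf{m},y_0))$ coming from $R^*\pi_{\mathbf{m}*}\mathbb{C}$ really is the one induced from $L(\mathbf{m})$.
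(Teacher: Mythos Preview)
Your proposal is correct and follows essentially the same approach as the paper, which simply states that the corollary follows directly from Propositions~\ref{snm} and~\ref{pgkm}. You have merely spelled out in a bit more detail how the quotient identification $H^*(\sH(\mathbf{m},y_0)) \cong L(\mathbf{m})/A_+L(\mathbf{m})$ and the compatibility of the $\pi_1$-actions combine to give the conclusion.
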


\begin{proof}
  This follows directly from Propositions~\ref{snm} and~\ref{pgkm}.
\end{proof}

\begin{definition}
  The action of $S_n$ on $L(\mathbf{m})$ (resp.\ $\HH^*(\sHrs(\mathbf{m},y_0))$) coming from
  Proposition~\ref{snm} (resp.\ Corollary~\ref{cmac}) is called the \emph{monodromy action of~$S_n$}.
\end{definition}

\subsection{Monodromy action for $\sHrs(\Bell)$}

To make the monodromy action of $S_n$ on $L(\Bell)$ explicit, recall from
\S\ref{fcbt} that $z_0$ denotes the element of $Z_0:=\sH(\Bell,y_0)$
corresponding to $y_0$ with the standard ordering of its eigenspaces.
So $z_0=([e_1],\ldots, [e_n],y_0)$.  Given $\sigma\in S_n$, we have
$\sigma z_0=([e_{\sigma^{-1}1}],\ldots, [e_{\sigma^{-1}(n)}]; y_0)$.  The
cohomology group $\HH^*Z_{0}=\HH^0Z_{0}$ is simply the group of functions
$f:Z_{0}\to\mathbb{C}$.  If, for $w\in S_n$, we write $\delta_w$ for the function taking
$wz_0$ to $1$ and all other elements of $Z_0$ to $0$, then we have
$(\sigma \delta_w)(z)= \delta_w(\sigma^{-1}z)$.  From this it easily
follows that $\sigma\delta_w= \delta_{\sigma w}$.  

\begin{lemma}\label{ldot0}
As an $A$-module, $L(\Bell)$ is isomorphic to the module $A^{|S_n|}$
of functions from the set $S_n$ to $A$.  The monodromy action of $S_n$
on $L(\Bell)$ is given by 
$$
((w p)(v))(t_1,\ldots, t_n)=(p(w^{-1}v))(t_{w(1)},\ldots, t_{w(n)})
$$
where $v,w\in S_n$, $p\in A^{|S_n|}$ and $t_1,\ldots ,t_n$ are variables.
\end{lemma}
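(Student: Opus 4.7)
The plan is to reduce everything to a very explicit description of $Z_0=\sH(\Bell,y_0)$ as a discrete $T$-fixed set and then invoke equivariant localization. Because $y_0$ is regular semisimple with eigenbasis $e_1,\ldots,e_n$ and $\Bell$ requires only $sF_i\subset F_i$, the complete flags in $Z_0$ are precisely the $n!$ coordinate flags $p_w:=w\cdot z_0$, one for each $w\in S_n$. Since $T$ centralizes $y_0$, it preserves each eigenline and therefore fixes every point of $Z_0$; thus $Z_0$ is a discrete set of $n!$ isolated $T$-fixed points. Equivariant localization at such a set gives the $A$-module identification
\[
L(\Bell)\;=\;H^*_T(Z_0)\;=\;\bigoplus_{w\in S_n}H^*_T(p_w)\;=\;\bigoplus_{w\in S_n}A,
\]
which I read as the $A$-module $A^{|S_n|}$ of functions $p\colon S_n\to A$, with $p(w)$ recording the $w$-th coordinate. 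This handles the first assertion.

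For the monodromy formula, I combine two pieces already in place. First, the discussion immediately preceding the lemma shows that $\sigma\in S_n$ permutes the points of $Z_0$ by $\sigma\cdot p_w=p_{\sigma w}$; in other words, the $S_n$-action on $Z_0$ via $\psi$ is the left regular action. Second, by \S\ref{fcbt}, the monodromy action on $A=H^*(BT)$ is the standard permutation action $(\sigma q)(t_1,\ldots,t_n)=q(t_{\sigma(1)},\ldots,t_{\sigma(n)})$. Functoriality of $H^*_T(-)$ for compatible pairs (automorphism of $T$, equivariant automorphism of the $T$-space) then forces the monodromy action on $L(\Bell)=\mathrm{Maps}(S_n,A)$ to be the semi-linear action
\[
(wp)(v)\;=\;w\cdot p(w^{-1}v),
\]
which, after writing out the action of $w$ on $A$ explicitly, becomes the formula stated in the lemma.

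The one point that needs genuine verification is the compatibility claim: that the $w$-permutation of $Z_0$ and the $w$-permutation of the $t_i$ really do constitute a matching pair for equivariant cohomology along a given monodromy loop $\gamma$ with $\psi(\gamma)=w$. This is precisely what the canonical trivialization $\mathscr{T}_Z\cong\mathbb{G}_{m,Z}^n$ from \S\ref{cgs} provides: the $S_n$-action on $Z=\sHrs(\Bell)$ permuting the labeled eigenvectors $[v_i]$ is intertwined with the $S_n$-action on characters of the pulled-back torus permuting the coordinates $t_i$. Pulling a loop $\gamma\in\pi_1(\grs,y_0)$ back along $\pi\colon Z\to\grs$ kills the monodromy on points, so the residual twist on the coefficient ring $A$ is exactly the character permutation dictated by $\psi(\gamma)$. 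Once this compatibility is in hand, the stated formula drops out by direct substitution, and this bookkeeping step is the only part of the proof requiring any real care.
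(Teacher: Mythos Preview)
Your proposal is correct and follows essentially the same approach as the paper: identify $Z_0$ with $S_n$ via $w\mapsto wz_0$, use that the $T$-fixed discrete set gives $H^*_T(Z_0)\cong A^{|S_n|}$, and then combine the left-regular permutation $\sigma\delta_w=\delta_{\sigma w}$ on components with the standard $S_n$-action on $A$ from~\eqref{fcbtf}. The paper's proof is terser (it simply invokes ``direct verification'' for the compatibility you carefully justify via the trivialization $\mathscr{T}_Z\cong\mathbb{G}_{m,Z}^n$), but the ingredients and logic are the same.
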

\begin{proof}
Under the identification $S_n\to Z_{0}$ given by $w\mapsto wz_0$,
the $\delta_w$ form a $\mathbb{C}$-basis of $\HH^0(Z_{0})$.  Moreover,
the map $\HH^0_T(Z_{0})\to \HH^0(Z_{0})$ is an $S_n$-equivariant isomorphism, and, 
under this identification, the $\delta_w$ freely generate $\HH^*_T(Z_{0})$
as an $A$-module.  The result now follows by direct verification using 
the fact that $S_n$ acts on $A$ as in \eqref{fcbtf}.
\end{proof}

\begin{corollary}\label{dotl}
  The monodromy action of $S_n$ agrees with Tymoczko's dot action
of $S_n$ on $\HH^*_T(\sH(\Bell,y_0))$. 
\end{corollary}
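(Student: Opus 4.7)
The plan is to verify Corollary~\ref{dotl} by directly comparing the explicit formula for the monodromy action of $S_n$ derived in Lemma~\ref{ldot0} with Tymoczko's definition of the dot action under the GKM description of equivariant cohomology. Since both actions are $\mathbb{C}$-linear and compatible with the $A$-module structure, it suffices to check that they agree on the natural basis $\{\delta_w : w \in S_n\}$ of $H^*_T(Z_0)$ and on the coefficient ring $A$.

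First I would recall Tymoczko's dot action in this setting. The $T$-fixed points of $\sH(\Bell,y_0)$ are exactly the flags of eigenvectors of $y_0$, and the bijection $w \mapsto wz_0$ identifies them with $S_n$; accordingly $H^*_T(Z_0) \cong A^{|S_n|}$ is the ring of functions $p : S_n \to A$. In this presentation, Tymoczko defines the dot action of $\sigma \in S_n$ by
\[
(\sigma \cdot p)(v) \;=\; \sigma\bigl(p(\sigma^{-1}v)\bigr),
\]
where the \emph{inner} occurrence of $\sigma$ refers to the standard action of $S_n$ on $A = \mathbb{C}[t_1,\ldots,t_n]$ by permutation of variables, namely $(\sigma f)(t_1,\ldots,t_n) = f(t_{\sigma(1)},\ldots,t_{\sigma(n)})$ as in Equation~\eqref{fcbtf}.

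Second, I would expand Tymoczko's formula: substituting the variable action into the outer expression gives
\[
\bigl((\sigma\cdot p)(v)\bigr)(t_1,\ldots,t_n) \;=\; \bigl(p(\sigma^{-1}v)\bigr)(t_{\sigma(1)},\ldots,t_{\sigma(n)}),
\]
which is literally the formula for the monodromy action recorded in Lemma~\ref{ldot0}. Since $\mathscr{L}(\Bell)$ is a local system of free $A$-modules with basis $\{\delta_w\}$, and both actions are $\mathbb{C}$-linear and act on $A$ by the same permutation representation, agreement of the two formulas on all of $A^{|S_n|}$ implies the two actions on $L(\Bell) = H^*_T(\sH(\Bell,y_0))$ coincide.

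The main obstacle is convention-keeping: Tymoczko's papers and several references use differing conventions about left vs.\ right actions, and about whether the permutation $\sigma$ or $\sigma^{-1}$ acts on variable indices, so I would pin down the conventions once at the start (matching Equation~\eqref{fcbtf} and the bijection $w \mapsto wz_0$ from \S\ref{fcbt}) and verify that no hidden inversion enters when identifying $Z_0$ with $S_n$ via $z_0$. Once the conventions are aligned, the comparison is essentially a tautology.
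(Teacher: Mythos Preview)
Your proposal is correct and follows exactly the paper's approach: the paper's proof simply says the result follows immediately by comparing the monodromy formula in Lemma~\ref{ldot0} with Tymoczko's description of the dot action in \cite[\S 3.1]{tymoczko}. Your expanded version, including the caution about left/right and index conventions, is a faithful unpacking of that one-line comparison.
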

\begin{proof}
  This follows immediately by comparing the description of the monodromy action
in Lemma~\ref{ldot0} with Tymoczko's description of the dot action~\cite[\S 3.1]{tymoczko}.
\end{proof}

\begin{theorem}\label{dotm}
  Let $\mathbf{m}$ be a Hessenberg function.
The monodromy action of $S_n$ on $\HH^*_T(\sH(\mathbf{m},y_0))$ is the same as Tymoczko's dot action.
\end{theorem}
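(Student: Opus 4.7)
The plan is to reduce to the case $\mathbf{m}=\Bell$, which is already handled by Corollary~\ref{dotl}, by showing that on $L(\mathbf{m})$ both the monodromy $S_n$-action and Tymoczko's dot action are obtained by restriction from the corresponding actions on $L(\Bell)$ along the canonical embedding $L(\mathbf{m})\hookrightarrow L(\Bell)$ induced by the closed immersion $\sH(\Bell)\hookrightarrow \sH(\mathbf{m})$.

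First I would record that the monodromy action on $L(\mathbf{m})$ is, by construction, the restriction of the monodromy action on $L(\Bell)$. The relative closed immersion $\sHrs(\Bell)\hookrightarrow \sHrs(\mathbf{m})$ over $\grs$ induces an inclusion $\mathscr{L}(\mathbf{m})\hookrightarrow \mathscr{L}(\Bell)$ of local systems of $\mathscr{A}$-modules, and its fiber over $y_0$ is a $\pi_1(\grs,y_0)$-equivariant inclusion $L(\mathbf{m})\hookrightarrow L(\Bell)$. This is precisely the setup already exploited in the proof of Proposition~\ref{snm}, so no new work is required here.

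Next I would check the analogous statement for Tymoczko's dot action. In her GKM/moment-graph description of the equivariant cohomology of Hessenberg varieties, $H^*_T(\sH(\mathbf{m},y_0))$ is realized as the subspace of $H^*_T(\sH(\Bell,y_0))\cong A^{|S_n|}$ cut out by the edge relations associated to $\mathbf{m}$, and the dot action on $H^*_T(\sH(\mathbf{m},y_0))$ is defined precisely as the restriction of the $S_n$-action on $A^{|S_n|}$ described in Lemma~\ref{ldot0}. One has to check, of course, that this action preserves the subspace $L(\mathbf{m})\subset L(\Bell)$, but that stability is built into Tymoczko's original construction and is an elementary inspection of the defining edge relations.

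Combining the two previous steps with Corollary~\ref{dotl}, which already identifies the monodromy action with Tymoczko's dot action on all of $L(\Bell)$, yields the theorem: both $S_n$-actions on $L(\mathbf{m})$ are determined as the restrictions of the common action on $L(\Bell)$ along the same inclusion, so they coincide. The only genuinely delicate point is matching the two restriction descriptions -- namely, that the GKM embedding used by Tymoczko to define her dot action on $L(\mathbf{m})$ is the same embedding (up to the obvious identifications) as the one arising from the inclusion of local systems $\mathscr{L}(\mathbf{m})\hookrightarrow\mathscr{L}(\Bell)$. This, however, is a bookkeeping matter rather than a real obstacle, since in both cases the embedding is precisely the equivariant pullback along $\sH(\Bell,y_0)\hookrightarrow \sH(\mathbf{m},y_0)$ and hence canonical.
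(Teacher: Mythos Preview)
Your proposal is correct and follows essentially the same approach as the paper: both argue that the monodromy action and Tymoczko's dot action each make $L(\mathbf{m})$ an $S_n$-equivariant $A$-submodule of $L(\Bell)$, and then invoke Corollary~\ref{dotl} to conclude they coincide. Your version is more explicit about the bookkeeping point---that the GKM inclusion and the local-system inclusion are both the equivariant pullback along $\sH(\Bell,y_0)\hookrightarrow\sH(\mathbf{m},y_0)$---which the paper's two-line proof leaves implicit.
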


\begin{proof}
 Under Tymoczko's dot action, $\HH^*_T(\sH(\mathbf{m},y_0))$ is an $S_n$-equivariant $A$-submodule of $\HH^*_T(\sH(\Bell,y_0))$.  The same is true of the 
monodromy action of $S_n$.  Therefore, by Corollary~\ref{dotl}, the two actions must coincide.  
\end{proof}

\begin{corollary}\label{tdot}
  Tymoczko's dot action of $S_n$ on the non-equivariant cohomology group 
$\HH^*(\sH(\mathbf{m},y_0))$ coincides with the monodromy action.  
\end{corollary}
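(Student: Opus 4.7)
The plan is to deduce this corollary from Theorem \ref{dotm}, which has already identified the two actions on equivariant cohomology, by passing to the quotient by the augmentation ideal. The essential point is that both actions descend compatibly from equivariant to non-equivariant cohomology, and the identification from Theorem \ref{dotm} respects this descent.

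More concretely, I would proceed as follows. By Proposition \ref{pgkm}, there is an isomorphism of sheaves on $\grs$
\[
\mathscr{L}(\mathbf{m}) / \mathscr{L}(\mathbf{m})\mathscr{A}_+ \xrightarrow{\ \sim\ } R^*\pi_{\mathbf{m}*}\mathbb{C}.
\]
Taking the fiber at $y_0$, this becomes a canonical isomorphism
\[
L(\mathbf{m})/A_+ L(\mathbf{m}) \xrightarrow{\ \sim\ } H^*(\sH(\mathbf{m}, y_0)),
\]
and the map is equivariant for the monodromy action of $\pi_1(\grs, y_0)$ on both sides since it is the stalk at $y_0$ of a map of local systems on $\grs$. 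By Corollary \ref{cmac} both monodromy actions descend through $\psi$ to actions of $S_n$, and by construction the resulting $S_n$-equivariant isomorphism identifies the monodromy action on $H^*(\sH(\mathbf{m}, y_0))$ with the one induced from the monodromy action on $L(\mathbf{m})$ by passing to the quotient modulo $A_+$.

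On the other hand, Tymoczko's dot action on $H^*(\sH(\mathbf{m}, y_0))$ is defined as the quotient of her dot action on $L(\mathbf{m}) = H^*_T(\sH(\mathbf{m}, y_0))$ by the augmentation ideal $A_+$ (the $S_n$-action on $A$ fixes $A_+$, so the quotient is well-defined). By Theorem \ref{dotm}, the monodromy action and the dot action on $L(\mathbf{m})$ agree, so the two induced actions on $L(\mathbf{m})/A_+ L(\mathbf{m})$ agree as well, and this yields the corollary after transport along the isomorphism above.

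The only subtle point worth checking is that the monodromy action on the ordinary cohomology genuinely is the descent of the monodromy action on the equivariant cohomology under the isomorphism of Proposition \ref{pgkm}, but this is immediate from the functoriality of $\pi_1$-actions on stalks of local systems together with the fact that $\mathscr{L}(\mathbf{m}) \to R^*\pi_{\mathbf{m}*}\mathbb{C}$ is a morphism of local systems. Hence there is no real obstacle and the corollary reduces to combining Theorem \ref{dotm} with Proposition \ref{pgkm}.
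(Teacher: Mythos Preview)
Your argument is correct and follows the same approach as the paper: both actions on ordinary cohomology are obtained from the corresponding actions on $L(\mathbf{m})$ by passing to the quotient $L(\mathbf{m})/A_+L(\mathbf{m})$, and Theorem~\ref{dotm} identifies them on $L(\mathbf{m})$. Your version is simply more explicit in invoking Proposition~\ref{pgkm} and checking that the quotient map is a morphism of local systems, which the paper leaves implicit.
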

\begin{proof}
  Tymoczko defines the dot action on $\HH^*(\sH(\mathbf{m},y_0))$ as the
  dot action on the quotient $L(\mathbf{m})/A_+L(\mathbf{m})$.  The
  monodromy action is also given by this quotient.
\end{proof}

Since Tymoczko's dot action and the monodromy action of $S_n$ coincide, we 
will not distinguish between them from now on: it will be the only action 
of $S_n$ appearing in the remainder of the paper.

\begin{theorem}\label{tiy}
Let $s\in\gr$ be a regular element of type $\lambda$ and let $\pi=\pi_{\mathbf{m}}:\sH(\mathbf{m})\to\mathfrak{g}$.   Let $B(s)$ be a sufficiently small ball in $\mathfrak{g}$ centered at $s$.    Then, for each $k\in\mathbb{Z}$, there is a $\mathbb{C}$-vector space isomorphism
$$
\HH^0(B(s)\cap \grs,R^k\pi_*\mathbb{C})\cong
\HH^k(\sH(\mathbf{m},y_0))^{S_{\lambda}}.$$
\end{theorem}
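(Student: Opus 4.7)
The plan is to identify the local invariant cycles with the invariants of the local monodromy group acting on $H^k(\sH(\mathbf{m},y_0))$, and then use Proposition~\ref{ryoung} and Corollary~\ref{cmac} to recognize that group as a Young subgroup $S_\lambda$.

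First I would set up the stalk computation. Since $\mathfrak{g}$ is smooth and $\grs$ is a Zariski dense open subset (its complement $\mathfrak{g}\setminus\grs$ is a nowhere dense closed analytic subspace), Fact~\ref{lmfact} applies, so a sufficiently small ball $B(s)$ has the property that $B(s)\cap\grs$ is non-empty and connected. Picking any basepoint $p\in B(s)\cap\grs$ and using the identification~\eqref{fgi} of local invariants with fundamental-group invariants, one has
\[
H^0(B(s)\cap\grs, R^k\pi_*\mathbb{C}) \;=\; H^k(\sH(\mathbf{m},p))^{\pi_1(B(s)\cap\grs,\,p)},
\]
where $\pi_1(B(s)\cap\grs,p)$ acts through its image in the monodromy group of the local system $R^k\pi_*\mathbb{C}\bigl|_{\grs}$.

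Next I would invoke Corollary~\ref{cmac}, which says that the monodromy representation of $\pi_1(\grs,p)$ on $H^k(\sH(\mathbf{m},p))$ factors through the surjection $\psi:\pi_1(\grs,p)\twoheadrightarrow S_n$ coming from the Galois cover $\sHrs(\Bell)\to\grs$. Therefore the invariants above equal $H^k(\sH(\mathbf{m},p))^{H(s)}$, where $H(s)\subseteq S_n$ is by definition the image in $S_n$ of the local fundamental group $\pi_1(B(s)\cap\grs,p)$; i.e., $H(s)$ is the local monodromy subgroup at $s$ for the $S_n$-cover $\sHrs(\Bell)\to\grs$. Applying Proposition~\ref{ryoung} to the regular matrix $s$ of type $\lambda$ identifies $H(s)$ with (a conjugate of) the Young subgroup $S_\lambda$.

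Finally, since $\grs$ is connected, parallel transport along any path from $p$ to $y_0$ identifies $H^k(\sH(\mathbf{m},p))$ with $H^k(\sH(\mathbf{m},y_0))$ as $S_n$-modules (up to an inner automorphism of $S_n$ coming from the path). This gives the desired isomorphism
\[
H^0(B(s)\cap\grs, R^k\pi_*\mathbb{C}) \;\cong\; H^k(\sH(\mathbf{m},y_0))^{S_\lambda}.
\]
The only subtlety is the conjugation ambiguity in identifying $H(s)$ with a fixed copy of $S_\lambda\subseteq S_n$, which depends on the choice of basepoint $p$ and of path to $y_0$; however, since conjugate subgroups produce invariant subspaces that are abstractly isomorphic (and in fact have the same dimension), this does not affect the statement as a $\mathbb{C}$-vector space isomorphism. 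I do not expect any genuinely hard obstacle here: the technical work has already been done in Sections~\ref{lmlfg}, \ref{gc}, \ref{sghs}, and \ref{mtd}, and the proof is essentially an assembly of Corollary~\ref{cmac} and Proposition~\ref{ryoung}.
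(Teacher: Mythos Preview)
Your proposal is correct and follows essentially the same route as the paper: apply \eqref{fgi} to rewrite the local invariant cycles as $\pi_1(B(s)\cap\grs,b)$-invariants of the fiber, then invoke Proposition~\ref{ryoung} to identify the local monodromy subgroup with $S_\lambda$. Your version is more explicit than the paper's (you spell out the roles of Fact~\ref{lmfact}, Corollary~\ref{cmac}, and the transport to $y_0$, and you address the conjugation ambiguity), but the underlying argument is the same.
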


\begin{proof}
By~\eqref{fgi} applied with $\mathcal{L}=R^k\pi_*\mathbb{C}$, we have $$\HH^0(B(s)\cap \grs,R^k\pi_*\mathbb{C})=
\HH^k(\sH(\mathbf{m},b))^{\pi_1(B(s)\cap\grs,b)}$$
where $b$ is any point in $B(s)\cap\grs$.   
The last vector space is isomorphic to the invariants of $\HH^k(\sH(\mathbf{m},y))$
under the local monodromy at $s$.  
The result then follows from Proposition~\ref{ryoung}.
\end{proof}

\begin{theorem}\label{dsf}
Suppose $s\in\mathfrak{g}^r$ is a regular element of type $\lambda$. 
Then, for each $k\in\mathbb{Z}$,
\begin{equation}
\label{eq:dsf}
\dim \HH^k(\sH(\mathbf{m},s))=\dim \HH^k(\sH(\mathbf{m},y_0))^{S_{\lambda}}.
\end{equation}
\end{theorem}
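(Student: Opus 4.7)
The plan is to combine three ingredients that have already been established: the local invariant cycle theorem (Theorem~\ref{thm:bbd}), the palindromicity criterion (Theorem~\ref{t1}) with the palindromicity of Betti numbers of regular Hessenberg varieties (Corollary~\ref{pbetti}), and the identification of local invariant cycles with $S_\lambda$-invariants (Theorem~\ref{tiy}).

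First I would apply Theorem~\ref{thm:bbd} to the morphism $\pi=\pi_{\mathbf{m}}\colon\sH(\mathbf{m})\to\mathfrak{g}$. The total space $\sH(\mathbf{m})$ is a vector bundle over the flag variety $\sX$ by Section~\ref{hs}, hence smooth and connected; $\mathfrak{g}$ is smooth; and $\pi$ is projective since it factors through a closed immersion into $\sX\times\mathfrak{g}$. The open set over which the fibers are smooth is $\grs$, and for every $s\in\mathfrak{g}$ and every sufficiently small ball $B(s)$, Theorem~\ref{thm:bbd} yields a surjection
\begin{equation}
H^k(\sH(\mathbf{m},s),\mathbb{C})\twoheadrightarrow H^0(B(s)\cap\grs,R^k\pi_*\mathbb{C}).
\end{equation}

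Next I would promote this surjection to an isomorphism using Theorem~\ref{t1}, which says that the local invariant cycle map is an isomorphism in every degree if and only if the Betti numbers of the fiber $\sH(\mathbf{m},s)$ are palindromic around the middle degree $d=d_X-d_Y$. A short dimension count gives
\begin{equation*}
d_X-d_Y=\binom{n}{2}+\sum_{i=1}^{n}m_i-n^2=\sum_{i=1}^{n-1}(m_i-i)=|\mathbf{m}|,
\end{equation*}
so the required palindromicity is precisely the assertion $q(t)=q(t^{-1})$ of Corollary~\ref{pbetti}, applied to the regular Hessenberg variety $\sH(\mathbf{m},s)$ of Jordan type $\lambda$. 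Thus the surjection above is actually an isomorphism for every $k$.

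Finally, Theorem~\ref{tiy} identifies the right-hand side: for a sufficiently small ball $B(s)$,
\begin{equation*}
H^0(B(s)\cap\grs,R^k\pi_*\mathbb{C})\cong H^k(\sH(\mathbf{m},y_0))^{S_\lambda}.
\end{equation*}
Chaining the two isomorphisms gives Equation~\eqref{eq:dsf} and completes the proof. I do not anticipate a serious obstacle here, since all the hard work has already been packaged into Theorems~\ref{thm:bbd}, \ref{t1}, and~\ref{tiy} together with Corollary~\ref{pbetti}; the only delicate point is verifying that the centering degree in Theorem~\ref{t1} agrees with $|\mathbf{m}|$, and this follows from a routine computation of $\dim\sH(\mathbf{m})$ using its vector bundle structure over the flag variety.
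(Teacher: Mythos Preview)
Your proposal is correct and follows essentially the same route as the paper's proof: apply Theorem~\ref{t1} to $\pi:\sH(\mathbf{m})\to\mathfrak{g}$, verify that the relative dimension is $|\mathbf{m}|$ via the vector bundle description, invoke Corollary~\ref{pbetti} for palindromicity to turn the local invariant cycle surjection into an isomorphism, and then use Theorem~\ref{tiy} to identify the local invariants with $S_\lambda$-invariants. The only cosmetic difference is that you separately cite Theorem~\ref{thm:bbd} for the surjection before upgrading via Theorem~\ref{t1}, whereas the paper invokes Theorem~\ref{t1} directly.
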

\begin{proof}
We are going to apply Theorem~\ref{t1} to the morphism $\pi:
\sH(\bm)\to\mathfrak{g}$. 
 Both the source and the target of $\pi$ are
  smooth, quasi-projective varieties.  Moreover, $\pi$ has relative
  dimension $|\bm|$.  (One way to check this is to use the fact that the
  projection $\pr_1:\sH(\bm)\to \mathscr{X}$ has relative dimension
  $\sum_{i=1}^n m_i$, while $\dim\mathscr{X}=\sum_{i=1}^{n-1} i$.
  Another way to see it, is to use the fact that the regular
  semisimple Hessenberg varieties have dimension $|\bm|$.)

By Corollary~\ref{pbetti}, we have
$$\dim \HH^i(\sH(\bm,s),\mathbb{C})=
\dim \HH^{2|\bm|-i}(\sH(\bm,s),\mathbb{C})$$ for all $i$.  

It follows then from Theorem~\ref{t1} that the local invariant cycle
map $$\HH^i(\sH(\bm,s))\to \HH^0(B(s)\cap\grs,R^i\pi_*\mathbb{C})$$ is an 
isomorphism, where $B(s)$ is any sufficiently small ball centered at
$s$ in $\mathfrak{g}$.   The result now follows from Theorem~\ref{tiy}. 
\end{proof}

Finally we can put all the pieces together to prove
Conjecture~\ref{conj:sw}.

\begin{theorem}
\label{thm:sw}
If $\chi_{\bm,d}$ denotes the dot action
on the cohomology group $\HH^{2d}$ of the regular semisimple
Hessenberg variety $\sH(\bm, s)$,
then $\ch \chi_{\bm,d}$ equals
the coefficient of $t^d$ in
$\omega X_{G(\bm)}(t)$.
\end{theorem}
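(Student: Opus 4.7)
The plan is to assemble Theorem~\ref{thm:sw} directly from the three main ingredients already established in the excerpt: Proposition~\ref{prop:frobenius} (the representation-theoretic interpretation of monomial coefficients), Theorem~\ref{dsf} (the equality of Betti numbers of regular Hessenberg varieties with $S_\lambda$-invariants on the regular semisimple one), and Theorem~\ref{thm:betti} (the identification of Betti numbers of regular Hessenberg varieties with $c_{d,\lambda}(\bm)$). The strategy is to show that $\ch \chi_{\bm,d}$ and the coefficient of $t^d$ in $\omega X_{G(\bm)}(t)$ have the same monomial symmetric function expansion, and then invoke the uniqueness part of Proposition~\ref{prop:frobenius}.

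First I would fix a partition $\lambda$ of $n$ and compute the coefficient of $m_\lambda$ in $\ch \chi_{\bm,d}$. By Proposition~\ref{prop:frobenius}, this coefficient is the dimension of the subspace of $H^{2d}(\sH(\bm,y_0))$ fixed by the Young subgroup $S_\lambda \subseteq S_n$ under the dot action. Next, choose a regular element $s \in \mathfrak{g}^{\mathrm{r}}$ of Jordan type $\lambda$; by Theorem~\ref{dsf}, we have
\[
\dim H^{2d}(\sH(\bm,y_0))^{S_\lambda} = \dim H^{2d}(\sH(\bm,s)) = \beta_{2d}(\sH(\bm,s)).
\]
Finally, Theorem~\ref{thm:betti} identifies this Betti number with $c_{d,\lambda}(\bm)$, which by definition is the coefficient of $t^d m_\lambda$ in the monomial symmetric function expansion of $\omega X_{G(\bm)}(t)$.

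Once this chain of equalities is written down for every $\lambda$, it shows that $\ch \chi_{\bm,d}$ and the coefficient of $t^d$ in $\omega X_{G(\bm)}(t)$ have identical monomial symmetric function expansions. Since both are symmetric functions (the left side by construction, the right side because Shareshian and Wachs proved $X_{G(\bm)}(t)$ is symmetric and $\omega$ preserves symmetry), and since the $m_\lambda$ form a basis for the symmetric functions, the two are equal.

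There is no genuine obstacle at this stage: the hard work has already been done in the earlier sections, where the combinatorial reciprocity theorem (Theorem~\ref{thm:reciprocity}) handled the expansion of $\omega X_{G(\bm)}(t)$, the palindromicity argument together with the local invariant cycle theorem (Theorem~\ref{t1}) handled the surjection-versus-isomorphism question, and the Kostant section argument with Proposition~\ref{ryoung} identified the local monodromy with $S_\lambda$. The only thing to verify carefully is that the monodromy action appearing in Theorem~\ref{dsf} is the same $S_n$-action whose character is $\chi_{\bm,d}$, but this is precisely the content of Corollary~\ref{tdot}. Thus the proof of Theorem~\ref{thm:sw} is a short assembly of named results.
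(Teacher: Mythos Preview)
Your proposal is correct and matches the paper's own proof essentially verbatim: the paper also assembles Theorem~\ref{thm:sw} from Theorem~\ref{thm:betti}, Theorem~\ref{dsf}, and Proposition~\ref{prop:frobenius} by equating the coefficients of $m_\lambda$ on both sides. Your additional remark that Corollary~\ref{tdot} is what guarantees the $S_n$-action in Theorem~\ref{dsf} really is $\chi_{\bm,d}$ is a good clarification that the paper leaves implicit.
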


\begin{proof}
By Theorem~\ref{thm:betti},
the left-hand side of Equation~(\ref{eq:dsf})
(in Theorem~\ref{dsf}) equals $c_{d,\lambda}(\bm)$
when $k=2d$.
On the other hand,
by Proposition~\ref{prop:frobenius},
the right-hand side of Equation~(\ref{eq:dsf})
equals the coefficient of $m_\lambda$ in $\ch \chi_{\bm,d}$.
\end{proof}

\bibliographystyle{plain}

\begin{thebibliography}{10}

\bibitem{ahhm2}
H.~{Abe}, M.~{Harada}, T.~{Horiguchi}, and M.~{Masuda}.
\newblock {The cohomology rings of regular nilpotent Hessenberg varieties in
  Lie type A}.
\newblock {\em {arXiv}:1512.09072}.
\newblock To appear in IMRN. https://doi.org/10.1093/imrn/rnx275.

\bibitem{ahhm}
H.~{Abe}, M.~{Harada}, T.~{Horiguchi}, and M.~{Masuda}.
\newblock {The equivariant cohomology rings of regular nilpotent Hessenberg
  varieties in Lie type A: a research announcement}.
\newblock {\em {arXiv}:1411.3065}.
\newblock To appear in in MORFISMOS (special volume in honor of Samuel Gitler).

\bibitem{ahm}
H.~{Abe}, T.~{Horiguchi}, and M.~{Masuda}.
\newblock {The cohomology rings of regular semisimple Hessenberg varieties for
  $h=(h(1),n,\ldots,n)$}.
\newblock {\em arxiv:1704.00934}.
\newblock To appear in Journal of Combinatorics.

\bibitem{dt}
Dave Anderson and Julianna Tymoczko.
\newblock Schubert polynomials and classes of {H}essenberg varieties.
\newblock {\em J. Algebra}, 323(10):2605--2623, 2010.

\bibitem{bbd}
A.~A. Be{\u\i}linson, J.~Bernstein, and P.~Deligne.
\newblock Faisceaux pervers.
\newblock In {\em Analysis and topology on singular spaces, {I} ({L}uminy,
  1981)}, volume 100 of {\em Ast\'erisque}, pages 5--171. Soc. Math. France,
  Paris, 1982.

\bibitem{boekle-khare-2}
Gebhard B\"ockle and Chandrashekhar Khare.
\newblock Mod {$l$} representations of arithmetic fundamental groups. {II}. {A}
  conjecture of {A}. {J}.\ de {J}ong.
\newblock {\em Compos. Math.}, 142(2):271--294, 2006.

\bibitem{BrosnanChow}
P.~{Brosnan} and T.~Y. {Chow}.
\newblock {Unit Interval Orders and the Dot Action on the Cohomology of Regular
  Semisimple Hessenberg Varieties}.
\newblock {\em {arXiv}:1511.00773}, November 2015.

\bibitem{cvxh}
T.-H. {Chen}, K.~{Vilonen}, and T.~{Xue}.
\newblock {Hessenberg varieties, intersections of quadrics, and the Springer
  correspondence}.
\newblock {\em arXiv:1511.00617}.

\bibitem{chow}
Timothy~Y. Chow.
\newblock The path-cycle symmetric function of a digraph.
\newblock {\em Adv. Math.}, 118(1):71--98, 1996.

\bibitem{dJ-conj}
A.~J. de~Jong.
\newblock A conjecture on arithmetic fundamental groups.
\newblock {\em Israel J. Math.}, 121:61--84, 2001.

\bibitem{demari-procesi-shayman}
F.~De~Mari, C.~Procesi, and M.~A. Shayman.
\newblock Hessenberg varieties.
\newblock {\em Trans. Amer. Math. Soc.}, 332(2):529--534, 1992.

\bibitem{deligne-eqdiff}
Pierre Deligne.
\newblock {\em \'Equations diff\'erentielles \`a points singuliers
  r\'eguliers}.
\newblock Lecture Notes in Mathematics, Vol. 163. Springer-Verlag, Berlin-New
  York, 1970.

\bibitem{dimca}
Alexandru Dimca.
\newblock {\em Singularities and topology of hypersurfaces}.
\newblock Universitext. Springer-Verlag, New York, 1992.

\bibitem{drinfeld-kashiwara}
Vladimir Drinfeld.
\newblock On a conjecture of {K}ashiwara.
\newblock {\em Math. Res. Lett.}, 8(5-6):713--728, 2001.

\bibitem{ElZeinLS}
Fouad El~Zein and Jawad Snoussi.
\newblock Local systems and constructible sheaves.
\newblock In {\em Arrangements, local systems and singularities}, volume 283 of
  {\em Progr. Math.}, pages 111--153. Birkh\"auser Verlag, Basel, 2010.

\bibitem{gaitsgory-dj}
D.~Gaitsgory.
\newblock On de {J}ong's conjecture.
\newblock {\em Israel J. Math.}, 157:155--191, 2007.

\bibitem{gasharov}
Vesselin Gasharov.
\newblock Incomparability graphs of {$(3+1)$}-free posets are {$s$}-positive.
\newblock In {\em Proceedings of the 6th {C}onference on {F}ormal {P}ower
  {S}eries and {A}lgebraic {C}ombinatorics ({N}ew {B}runswick, {NJ}, 1994)},
  volume 157, pages 193--197, 1996.

\bibitem{gessel}
Ira~M. Gessel.
\newblock Multipartite {$P$}-partitions and inner products of skew {S}chur
  functions.
\newblock In {\em Combinatorics and algebra ({B}oulder, {C}olo., 1983)},
  volume~34 of {\em Contemp. Math.}, pages 289--317. Amer. Math. Soc.,
  Providence, RI, 1984.

\bibitem{GR}
Hans Grauert and Reinhold Remmert.
\newblock {\em Coherent analytic sheaves}, volume 265 of {\em Grundlehren der
  Mathematischen Wissenschaften [Fundamental Principles of Mathematical
  Sciences]}.
\newblock Springer-Verlag, Berlin, 1984.

\bibitem{guay-paquet}
M.~{Guay-Paquet}.
\newblock {A modular relation for the chromatic symmetric functions of
  (3+1)-free posets}.
\newblock {\em {arXiv}:1306.2400}.

\bibitem{gp-second}
M.~{Guay-Paquet}.
\newblock {A second proof of the Shareshian--Wachs conjecture, by way of a new
  Hopf algebra}.
\newblock {\em {arXiv}:1601.05498}.

\bibitem{haiman}
Mark Haiman.
\newblock Hecke algebra characters and immanant conjectures.
\newblock {\em J. Amer. Math. Soc.}, 6(3):569--595, 1993.

\bibitem{HaradaPrecup}
M.~{Harada} and M.~{Precup}.
\newblock {The cohomology of abelian Hessenberg varieties and the
  Stanley-Stembridge conjecture}.
\newblock {\em {arXiv}:1709.06736}.

\bibitem{hartshorne}
Robin Hartshorne.
\newblock {\em Algebraic geometry}.
\newblock Springer-Verlag, New York, 1977.
\newblock Graduate Texts in Mathematics, No. 52.

\bibitem{Hatcher}
Allen Hatcher.
\newblock {\em Algebraic topology}.
\newblock Cambridge University Press, Cambridge, 2002.

\bibitem{KashiwaraConj}
Masaki Kashiwara.
\newblock Semisimple holonomic {$\mathscr{D}$}-modules.
\newblock In {\em Topological field theory, primitive forms and related topics
  ({K}yoto, 1996)}, volume 160 of {\em Progr. Math.}, pages 267--271.
  Birkh\"auser Boston, Boston, MA, 1998.

\bibitem{Kostant63}
Bertram Kostant.
\newblock Lie group representations on polynomial rings.
\newblock {\em Amer. J. Math.}, 85:327--404, 1963.

\bibitem{Kumar93}
Shrawan Kumar.
\newblock Finiteness of local fundamental groups for quotients of affine
  varieties under reductive groups.
\newblock {\em Comment. Math. Helv.}, 68(2):209--215, 1993.

\bibitem{Lojasiewicz64}
S.~Lojasiewicz.
\newblock Triangulation of semi-analytic sets.
\newblock {\em Ann. Scuola Norm. Sup. Pisa (3)}, 18:449--474, 1964.

\bibitem{Looijenga}
Eduard J.~N. Looijenga.
\newblock {\em Isolated singular points on complete intersections}, volume~5 of
  {\em Surveys of Modern Mathematics}.
\newblock International Press, Somerville, MA; Higher Education Press, Beijing,
  second edition, 2013.

\bibitem{MacLane}
Saunders Mac~Lane.
\newblock {\em Categories for the working mathematician}.
\newblock Springer-Verlag, New York-Berlin, 1971.
\newblock Graduate Texts in Mathematics, Vol. 5.

\bibitem{mochizuki-1}
Takuro Mochizuki.
\newblock Asymptotic behaviour of tame harmonic bundles and an application to
  pure twistor {$D$}-modules. {I}.
\newblock {\em Mem. Amer. Math. Soc.}, 185(869):xii+324, 2007.

\bibitem{mochizuki-2}
Takuro Mochizuki.
\newblock Asymptotic behaviour of tame harmonic bundles and an application to
  pure twistor {$D$}-modules. {II}.
\newblock {\em Mem. Amer. Math. Soc.}, 185(870):xii+565, 2007.

\bibitem{mumford-cpv}
David Mumford.
\newblock {\em Algebraic geometry. {I}}.
\newblock Classics in Mathematics. Springer-Verlag, Berlin, 1995.
\newblock Complex projective varieties, Reprint of the 1976 edition.

\bibitem{NgoHitchin}
Bao~Ch{\^a}u Ng{\^o}.
\newblock Fibration de {H}itchin et endoscopie.
\newblock {\em Invent. Math.}, 164(2):399--453, 2006.

\bibitem{PrecupPal}
M.~{Precup}.
\newblock {The Betti numbers of regular Hessenberg varieties are palindromic}.
\newblock {\em {arXiv}:1603.07662}.
\newblock To appear in \emph{Transformation Groups}.

\bibitem{precup}
Martha Precup.
\newblock Affine pavings of {H}essenberg varieties for semisimple groups.
\newblock {\em Selecta Math. (N.S.)}, 19(4):903--922, 2013.

\bibitem{Prill}
David Prill.
\newblock Local classification of quotients of complex manifolds by
  discontinuous groups.
\newblock {\em Duke Math. J.}, 34:375--386, 1967.

\bibitem{riemann-abel}
B.~Riemann.
\newblock Theorie der {A}bel'schen {F}unctionen.
\newblock {\em J. Reine Angew. Math.}, 54:115--155, 1857.

\bibitem{sagan}
Bruce~E. Sagan.
\newblock {\em The symmetric group}, volume 203 of {\em Graduate Texts in
  Mathematics}.
\newblock Springer-Verlag, New York, second edition, 2001.
\newblock Representations, combinatorial algorithms, and symmetric functions.

\bibitem{SaitoKaehler}
Morihiko Saito.
\newblock Decomposition theorem for proper {K}\"ahler morphisms.
\newblock {\em Tohoku Math. J. (2)}, 42(2):127--147, 1990.

\bibitem{SerreRep}
Jean-Pierre Serre.
\newblock {\em Linear representations of finite groups}.
\newblock Springer-Verlag, New York-Heidelberg, 1977.
\newblock Translated from the second French edition by Leonard L. Scott,
  Graduate Texts in Mathematics, Vol. 42.

\bibitem{shareshian-wachs-0}
John Shareshian and Michelle~L. Wachs.
\newblock Chromatic quasisymmetric functions and {H}essenberg varieties.
\newblock In {\em Configuration spaces}, volume~14 of {\em CRM Series}, pages
  433--460. Ed. Norm., Pisa, 2012.

\bibitem{shareshian-wachs}
John Shareshian and Michelle~L. Wachs.
\newblock Chromatic quasisymmetric functions.
\newblock {\em Adv. Math.}, 295:497--551, 2016.

\bibitem{slodowy}
Peter Slodowy.
\newblock {\em Simple singularities and simple algebraic groups}, volume 815 of
  {\em Lecture Notes in Mathematics}.
\newblock Springer, Berlin, 1980.

\bibitem{springer}
T.~A. Springer.
\newblock Quelques applications de la cohomologie d'intersection.
\newblock In {\em Bourbaki {S}eminar, {V}ol. 1981/1982}, volume~92 of {\em
  Ast\'erisque}, pages 249--273. Soc. Math. France, Paris, 1982.

\bibitem{stacks-project}
The {Stacks Project Authors}.
\newblock \textit{Stacks Project}.
\newblock \url{http://stacks.math.columbia.edu}, 2018.

\bibitem{stanley-xg}
Richard~P. Stanley.
\newblock A symmetric function generalization of the chromatic polynomial of a
  graph.
\newblock {\em Adv. Math.}, 111(1):166--194, 1995.

\bibitem{stanley-ec2}
Richard~P. Stanley.
\newblock {\em Enumerative combinatorics. {V}ol. 2}, volume~62 of {\em
  Cambridge Studies in Advanced Mathematics}.
\newblock Cambridge University Press, Cambridge, 1999.
\newblock With a foreword by Gian-Carlo Rota and appendix 1 by Sergey Fomin.

\bibitem{stanley-stembridge}
Richard~P. Stanley and John~R. Stembridge.
\newblock On immanants of {J}acobi-{T}rudi matrices and permutations with
  restricted position.
\newblock {\em J. Combin. Theory Ser. A}, 62(2):261--279, 1993.

\bibitem{teffthesis}
Nicholas~James Teff.
\newblock {\em The {H}essenberg representation}.
\newblock ProQuest LLC, Ann Arbor, MI, 2013.
\newblock Thesis (Ph.D.)--The University of Iowa.

\bibitem{tymoczko-linear}
Julianna~S. Tymoczko.
\newblock Linear conditions imposed on flag varieties.
\newblock {\em Amer. J. Math.}, 128(6):1587--1604, 2006.

\bibitem{tymoczko}
Julianna~S. Tymoczko.
\newblock Permutation actions on equivariant cohomology of flag varieties.
\newblock In {\em Toric topology}, volume 460 of {\em Contemp. Math.}, pages
  365--384. Amer. Math. Soc., Providence, RI, 2008.

\bibitem{wedhorn}
Torsten Wedhorn.
\newblock {\em Manifolds, sheaves, and cohomology}.
\newblock Springer Studium Mathematik---Master. Springer Spektrum, Wiesbaden,
  2016.

\end{thebibliography}
\def\cprime{$'$}

\end{document}